\newcommand{\be}{\begin{equation}}
\newcommand{\ee}{\end{equation}}
\newcommand{\bea}{\begin{eqnarray}}
\newcommand{\eea}{\end{eqnarray}}
\newtheorem{thm}{Theorem}[section]
\theoremstyle{definition}
\theoremstyle{lemma}
\newtheorem{lem}{Lemma}[section]
\theoremstyle{example}
\newtheorem{rmrk}{Remark}[section]
\theoremstyle{illustration}
\theoremstyle{proposition}
\theoremstyle{corollary}
\numberwithin{equation}{section}
\begin{document}
\date{}
\title{\textbf{Some $B$-Difference Sequence Spaces Derived by Using Generalized Means and Compact Operators}}
\author{Amit Maji\footnote{ Corresponding author's, e-mail:
amit.iitm07@gmail.com}, Atanu Manna \footnote{Author's e-mail: atanumanna@maths.iitkgp.ernet.in}, P. D. Srivastava \footnote{Author's e-mail: pds@maths.iitkgp.ernet.in}\\
\textit{\small{Department of Mathematics, Indian Institute of Technology, Kharagpur}} \\
\textit{\small{Kharagpur 721 302, West Bengal, India}}}
\maketitle
\vspace{20pt}
\begin{center}\textbf{Abstract}\end{center}
This paper presents new sequence spaces $X(r, s, t, p ; B)$ for
$X \in \{ l_\infty(p), c(p), c_0(p), l(p)\}$ defined by using
generalized means and difference operator. It is shown that these
spaces are complete paranormed spaces and the spaces $X(r, s, t, p ; B)$ for
$X \in \{ c(p), c_0(p), l(p)\}$ have Schauder basis. Furthermore,
the $\alpha$-, $\beta$-, $\gamma$- duals of these sequence spaces are computed
 and also obtained necessary and sufficient conditions for some matrix transformations from $X(r, s, t, p ;B)$ to $X$. Finally,
 some classes of compact operators on the space $l_p(r, s, t ;B)$ are characterized by using the Hausdorff measure of noncompactness.\\
\textit{2010 Mathematics Subject Classification}: 46A45, 46B15, 46B50.\\
\textit{Keywords:} Difference operator; Generalized means; $\alpha$-, $\beta$-, $\gamma$- duals; Matrix transformation; Hausdorff measure
of noncompactness; Compact operator.
\section{Introduction}
The study of sequence spaces has importance in the several branches of analysis, namely, the structural theory of topological vector spaces, summability theory, Schauder basis theory etc. Besides this, the theory of sequence spaces is a powerful tool for obtaining some topological and geometrical results by using Schauder basis.

Let $w$ be the space of all real or complex sequences $x=(x_n), n\in \mathbb{N}_0$. For an infinite matrix $A$ and a sequence space $\lambda$, the matrix domain of $A$, denoted by $\lambda_{A}$, is defined as $\lambda_A=\{x\in w: Ax\in \lambda\}$ \cite{WIL}. Some basic methods, which are used to determine the topologies, matrix transformations and inclusion relations on sequence spaces can also be applied to study the matrix domain $\lambda_A$. In recent times, there is an approach of forming a new sequence space by using a matrix domain of a suitable matrix and characterize the matrix mappings between these sequence spaces.
\par Let $(p_k)$ be a bounded sequence of strictly positive real numbers such that $H=\displaystyle\sup_k p_k$ and $M=\max \{1, H\}$. The linear spaces $c(p), c_0(p)$, $l_\infty(p)$ and $l(p)$ are studied by Maddox \cite{MAD}, where
\begin{align*}
&c(p)=\Big\{x=(x_k)\in w:  \displaystyle \lim_{k\rightarrow\infty}|x_k-l|^{p_k}=0 \mbox{~~for some~} l\in \mathbb{C}\Big\},\\
&c_0(p)=\Big\{x=(x_k)\in w:  \displaystyle \lim_{k\rightarrow\infty}|x_k|^{p_k}=0 \Big\},\\
&l_\infty(p)=\Big\{x=(x_k)\in w:  \displaystyle \sup_{k\in \mathbb{N_{\rm 0}}}|x_k|^{p_k}<\infty\Big\} \mbox{~and~}\\
&l(p)=\Big\{x=(x_k)\in w:  \displaystyle \sum_{k=0}^{\infty}|x_k|^{p_k}<\infty\Big\}.
\end{align*}
The linear spaces $c(p), c_0(p)$, $l_\infty(p)$ are complete with the paranorm $g(x) = \displaystyle \sup_{k}|x_k|^{p_k \over M}$ if and only if $\inf p_k>0$ for all $k$ while $l(p)$ is complete with the paranorm $\tilde g(x) =
\displaystyle \Big(\sum_{k}|x_k|^{p_k }\Big)^{1 \over M}$.
Recently, several authors introduced new sequence spaces by using matrix domain. For example, Ba\c{s}ar et al. \cite{BAS2} studied the space $bs(p)=[l_\infty(p)]_S$, where $S$ is the summation matrix. Altay and Ba\c{s}ar \cite{ALT1} studied the sequence spaces $r^t(p)$ and $r_{\infty}^t(p)$, which consist of all sequences whose Riesz transform are in the spaces $l(p)$ and $l_\infty(p)$ respectively, i.e., $r^t(p)=[l(p)]_{R^t}$ and $r_{\infty}^t(p)= [l_{\infty}(p)]_{R^t}$. Altay and Ba\c{s}ar  also studied the sequence spaces $r_{c}^t(p)= [c(p)]_{R^t}$ and $r_{0}^t(p)= [c_0(p)]_{R^t}$ in \cite{ALT}.

Kizmaz first introduced and studied the difference sequence space in \cite{KIZ}.
Later on, many authors including Ahmad and Mursaleen \cite{AHM}, \c{C}olak and Et \cite{COL}, Ba\c{s}ar and Altay \cite{ALT}, etc.
studied new sequence spaces defined by using difference operator. Using Euler mean of order $\alpha$, $0< \alpha< 1$ and difference operator, Karakaya and Polat
introduced the paranormed sequence spaces $e_0^\alpha(p; \Delta),e^\alpha_c(p; \Delta)$ and $e_{\infty}^\alpha(p; \Delta)$ in \cite{KAR}.  Mursaleen and Noman \cite{MUR} introduced a sequence
space of generalized means, which includes most of the earlier known
sequence spaces. But till $2011$, there was no such literature available in which a sequence space is generated by
combining both the weighted mean and the difference operator. This
was first initiated by Polat et al. \cite{POL}.
Later on, Demiriz and \c{C}akan \cite{DEM} introduced the new paranormed difference sequence spaces $\lambda(r', s', p; \Delta)$ for $\lambda \in \{l_\infty(p), c(p), c_0(p), l(p)\}$.

Quite recently, Ba\c{s}arir and Kara \cite{BASA1} introduced and studied the $B$-difference sequence space $l(r', s', p; B)$ defined as
$$l(r', s', p; B)=\Big\{x\in w: (G(r', s').B) x \in l(p)\Big\},$$
where $r'=(r_n') ,s'=(s_n')$ are non zero sequences and the matrices $G(r', s')=(g_{nk})$, $B=B(u,v)=(b_{nk}), u, v \neq 0$ are defined by
\begin{align*}
g_{nk} &= \left\{
\begin{array}{ll}
    r'_ns'_k & \quad \mbox{~if~} 0\leq k \leq n,\\
    0 & \quad \mbox{~if~} k > n
\end{array}\right.& \mbox{and~}
b_{nk}& = \left\{
\begin{array}{ll}
    0 & \quad \mbox{~if~} 0\leq k <n-1,\\
    v & \quad \mbox{~if~}  k =n-1 \\
    u & \quad \mbox{~if~}  k =n\\
    0 & \quad \mbox{~if~} k> n.
\end{array}\right.
\end{align*}
By using matrix domain, one can write $l(r', s', p; B)=[l{(p)}]_{G(r', s'). B }$.

The aim of this present paper is to introduce the sequence spaces $X(r,s,t,p; B)$ for $X \in \{  l_\infty(p), c(p), $ $c_0(p), l(p)\}$. We have shown that these sequence spaces are complete paranormed sequence spaces under some suitable paranorm. Some topological results and the $\alpha$-, $\beta$-, $\gamma$- duals of these spaces are obtained. A characterization of some matrix transformations between these new sequence spaces is established. We also give a characterization of some classes of compact operators on the space $l_p(r, s, t ;B)$ by using the Hausdorff measure of noncompactness.
\section{Preliminaries}
Let $l_\infty, c$ and $c_0$ be the spaces of all bounded, convergent and null sequences $x=(x_n)$ respectively, with the norm $\|x\|_\infty=\displaystyle\sup_{n}|x_n|$. Let  $bs$ and $cs$ be the sequence spaces of all bounded and convergent series respectively. We denote by $e=(1, 1, \cdots)$ and $e_{n}$ for the sequence whose $n$-th term is $1$ and others are zero and $\mathbb{{N_{\rm 0}}}=\mathbb{N}\cup \{0\}$, where $\mathbb{N}$ is the set of all natural numbers.
A sequence $(b_n)$ in a normed linear space $(X,
\|.\|)$ is called a Schauder basis for $X$ if for every $x\in X$
there is a unique sequence of scalars $(\mu_n)$ such that
\begin{center}
$\Big\|x-\displaystyle\sum_{n=0}^{k}\mu_nb_n\Big\|\rightarrow0$ as $k\rightarrow\infty$,
\end{center}
i.e., $x=\displaystyle\sum_{n=0}^{\infty}\mu_nb_n$ \cite{WIL}.\\
For any subsets $U$ and $V$ of $w$, the multiplier space $M(U, V)$ of $U$ and $V$ is defined as
\begin{center}
$M(U, V)=\{a=(a_n)\in w : au=(a_nu_n)\in V ~\mbox{for all}~ u\in U\}$.
\end{center}
In particular,
\begin{center}
$U^\alpha= M(U, l_1)$, $U^\beta= M(U, cs)$ and $U^\gamma= M(U, bs)$
\end{center} are called the $\alpha$-, $\beta$- and $\gamma$- duals of $U$ respectively \cite{WIL}.

Let $A=(a_{nk})_{n, k}$ be an infinite matrix
with real or complex entries $a_{nk}$. We write $A_n$ as the
sequence of the $n$-th row of $A$, i.e.,
$A_n=(a_{nk})_{k}$ for every $n$.
For $x=(x_n)\in w$, the $A$-transform of $x$ is defined as the
sequence $Ax=((Ax)_n)$, where $A_n(x)=(Ax)_n=\displaystyle\sum_{k=0}^{\infty}a_{nk}x_k$,
provided the series on the right side converges for each $n$. For any two sequence spaces $U$ and $V$, we denote by $(U, V)$, the class of all infinite matrices $A$ that map from $U$ into $V$. Therefore $A\in (U, V)$ if and only if $Ax=((Ax)_n)\in V$ for all $x\in U$. In other words, $A\in (U, V)$ if and only if $A_n \in U^\beta$ for all $n$ \cite{WIL}.
The theory of $BK$ spaces is the most powerful tool in the characterization of matrix transformations between sequence spaces. A sequence space $X$ is called a $BK$ space if it is a Banach space with continuous coordinates $p_n: X\rightarrow \mathbb{K}$, where $\mathbb{K}$ denotes the real or complex field and $p_n(x)=x_n$ for all $x=(x_n)\in X$ and each $n\in \mathbb{N}_{0}$.
The space $l_1$ is a $BK$ space with the usual norm defined by $\|x\|_{l_1}=\displaystyle\sum_{k=0}^{\infty}|x_k|$. An infinite matrix $T=(t_{nk})_{n,k}$ is called a triangle if $t_{nn}\neq 0$ and $t_{nk}=0$ for all $k>n$.
Let $T$ be a triangle and $X$ be a $BK$ space. Then $X_T$ is also a $BK$ space with the norm given by $\|x\|_{X_T}= \|Tx\|_X$ for all $x\in X_T$ \cite{WIL}.

\section{Sequence spaces $X(r, s, t, p; B)$ for $X \in \{ l_{\infty}(p), c(p), c_{0}(p), l(p)\}$}
In this section, we first begin with the notion of generalized means given by Mursaleen et al. \cite{MUR}. \\
We denote the sets $\mathcal{U}$ and $\mathcal{U}_{0}$ as
\begin{center}
$ \mathcal{U} = \Big \{ u =(u_{n})_{n=0}^{\infty} \in w: u_{n} \neq
0~~ {\rm for~ all}~~ n \Big \}$ and $ \mathcal{U_{\rm 0}} = \Big \{ u
=(u_{n})_{n=0}^{\infty} \in w: u_{0} \neq 0 \Big \}.$
\end{center}
Let $r, t \in \mathcal{U}$ and $s \in \mathcal{U}_{0}$. The sequence $y=(y_{n})$ of generalized means of a sequence $x=(x_{n})$ is defined
by $$ y_{n}= \frac{1}{r_{n}}\sum_{k=0}^{n} s_{n-k}t_{k}x_{k} \qquad (n \in \mathbb{N_{\rm 0}}).$$
The infinite matrix $A(r, s, t)$ of generalized means is defined by

$$(A(r,s,t))_{nk} = \left\{
\begin{array}{ll}
    \frac{s_{n-k}t_{k}}{r_{n}} & \quad 0\leq k \leq n,\\
    0 & \quad k > n.
\end{array}\right. $$

Since $A(r, s, t)$ is a triangle, it has a unique inverse and the
inverse is also a triangle \cite{JAR}. Take $D_{0}^{(s)} =
\frac{1}{s_{0}}$ and

$ D_{n}^{(s)} =
\frac{1}{s_{0}^{n+1}} \left|
\begin{matrix}
    s_{1} & s_{0} &  0 & 0 \cdots & 0 \\
    s_{2} & s_{1} & s_{0}& 0 \cdots & 0 \\
    \vdots & \vdots & \vdots & \vdots    \\
    s_{n-1} & s_{n-2} & s_{n-3}& s_{n-4} \cdots & s_0 \\
      s_{n} & s_{n-1} & s_{n-2}& s_{n-3} \cdots & s_1
\end{matrix} \right| \qquad \mbox{for} $ $n=1, 2, 3, \ldots$\\

Then the inverse of $A(r, s, t)$ is the triangle $\tilde{B}= (\tilde{b}_{nk})_{n, k}$, which is defined as
$$\tilde{b}_{nk} = \left\{
\begin{array}{ll}
    (-1)^{n-k}~\frac{D_{n-k}^{(s)}}{t_{n}}r_{k} & \quad 0\leq k \leq n,\\
    0 & \quad k > n.
\end{array}\right. $$
Throughout this paper, we consider $p=(p_k)$ be a bounded sequence of strictly positive real numbers such that $H=\displaystyle\sup_kp_k$ and $M=\max\{1, H\}$. \\
We now introduce the sequence spaces $X(r, s, t, p; B)$ for $X \in \{ l_{\infty}(p), c(p), c_{0}(p), l(p)\}$, combining both the generalized means and the matrix $B(u,v )$ as
$$ X(r, s, t, p; B)= \Big \{ x \in w :  y=A(r,s,t; B)x \in X \Big  \},$$
where $ y=(y_{k})$ is $A(r,s,t; B)$-transform of a sequence $x=(x_{k})$, i.e.,
\begin{equation}{\label{3eq}}
 y_n = \frac{1}{r_n}  \Big(\displaystyle \sum_{k=0}^{n-1} (s_{n-k}t_k u + s_{n-k-1}t_{k+1}v)x_k + s_0t_nux_n\Big) ,~~~ n \in \mathbb{N}_{0},
\end{equation}
where we mean $\displaystyle \sum_{n}^{m}=0$ for $n>m$. By using matrix domain, we can write $X(r, s,t, p; B)=  X_{A(r, s,t,p; B)}=\{x \in w :A(r, s,t; B)x\in X\}$, where $A(r, s,t; B)= A(r, s,t). B$, product of two triangles $A(r, s,t)$ and $B(u,v)$. For $X=l_p$, $p\geq 1$, we write $X(r, s, t, p; B)$ as $l_p(r, s, t; B)$.

These sequence spaces include many known sequence spaces studied by several authors. For examples,
\begin{enumerate}[I.]
\item if $r_{n}=\frac{1}{r'_{n}}$, $t_{n}=s'_{n}$, $s_{n}=1$ $\forall n$, then the sequence spaces $ l(r, s,t, p; B)$ reduce to $ l(r', s', p; B)$
 studied by Ba\c{s}arir and Kara \cite{BASA1}.
\item if $r_{n}=\frac{1}{r'_{n}}$, $t_{n}=s'_{n}$, $s_{n}=1$ $\forall n$, $u=1$ and $v=-1$ then the sequence spaces $ X(r, s,t, p; B)$ reduce to $ X(r', s', p; \Delta)$ for $X \in \{ l_{\infty}(p), c(p), c_{0}(p), l(p)\}$ studied by Demiriz and \c{C}akan \cite{DEM}.

\item if $r_{n}=\frac{1}{n!},$ $t_{n}=\frac{\alpha^n}{n!}$, $s_{n}=\frac{(1-\alpha)^n}{n!}$, where $0<\alpha<1$, $u=1$, $v=-1$, then the sequence spaces $ X(r, s,t, p; B)$ for $X\in\{l_{\infty}(p), c(p), c_{0}(p)\}$, reduce to $e^\alpha_\infty(p; \Delta)$, $e_c^\alpha(p; \Delta)$, $e^\alpha_0(p; \Delta)$ respectively studied by Karakaya and Polat \cite{KAR}.
\item if $\lambda=(\lambda_k)$ be a strictly increasing sequence of positive real numbers tending to infinity such that $ r_{n}=\lambda_n,$ $t_{n}=\lambda_n-\lambda_{n-1}$, $s_{n}=1$ $\forall n$, then $l_p(r, s,t; B)$ reduces to the sequence space $l_p^\lambda(B)$, where the sequence space $l_p^\lambda$ is introduced and studied by Mursaleen and Noman \cite{MUR5}.
\item if $r_{n}=n+1,$ $t_{n}={1+ \alpha^n}$, where $0<\alpha<1$, $s_{n}=1$ $\forall n$, $u=1$, $v=-1$, then the sequence space $l_p(r, s, t; B)$ reduces to the sequence space $a_p^{\alpha}(\Delta)$ studied by Demiriz and \c{C}akan \cite{DEM1}.
\end{enumerate}

\section{Main results}
Throughout the paper, we denote the sequence spaces $X(r, s, t, p; B)$ as $l(r, s, t, p; B)$, $c_0(r, s, t, p; B)$, $c(r, s, t, p; B)$ and $l_{\infty}(r, s, t, p; B)$
when $X =l(p),c_0(p), c(p)$ and $l_{\infty}(p)$ respectively. Now, we begin with some topological results of the new sequence spaces.
\begin{thm}
$(a)$ The sequence space $l(r, s, t, p; B)$ is a complete
linear metric space paranormed by $\tilde{h}$ defined as
\begin{center}
$\tilde{h}(x)=\Big(\displaystyle\sum_{n=0}^{\infty}\Big|\frac{1}{r_n}  \Big(\displaystyle \sum_{k=0}^{n-1} (s_{n-k}t_k u + s_{n-k-1}t_{k+1}v)x_k + s_0t_nux_n\Big) \Big|^{p_n} \Big)^\frac{1}{M}$.
\end{center}
$(b)$ The sequence spaces $X(r,s, t, p ; B)$ for $X\in
\{l_{\infty}(p)$, $ c(p), c_{0}(p)\}$ are complete linear metric
spaces paranormed by $h$ defined as
\begin{center}
$h(x)=\displaystyle\sup_{n\in \mathbb{N}_{0}}\Big|\frac{1}{r_n}  \Big(\displaystyle \sum_{k=0}^{n-1} (s_{n-k}t_k u + s_{n-k-1}t_{k+1}v)x_k + s_0t_nux_n\Big) \Big| ^\frac{p_n}{M}$.
\end{center}
$(c)$ The sequence space $\ell_p(r,s,t; B)$, $1\leq p < \infty$ is a $BK$ space with the norm given by
\begin{center}
$ \| x \|_{l_p(r,s,t; B)} = \| y\|_{l_p}$,
\end{center}
where $y=(y_k)$ is defined in (\ref{3eq}).
\end{thm}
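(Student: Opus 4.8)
The plan is to treat all three assertions as statements about a single matrix domain $X_A$ of a base space $X\in\{l(p),l_\infty(p),c(p),c_0(p),l_p\}$ under the triangle $A:=A(r,s,t;B)=A(r,s,t)\cdot B$, whose $n$-th coordinate $(Ax)_n=y_n$ is exactly the expression in (\ref{3eq}). First I would record that $A$ is genuinely a triangle: it is the product of the two triangles $A(r,s,t)$ and $B(u,v)$, and its diagonal entry $s_0t_nu/r_n$ is nonzero because $s_0\neq0$, $t_n\neq0$, $u\neq0$, $r_n\neq0$. Hence $A$ is injective and maps $w$ bijectively onto $w$, with triangular inverse $A^{-1}=B^{-1}\tilde B$ (here $\tilde B$ is the inverse of $A(r,s,t)$ computed in Section 3, and the inverse/product of triangles is again a triangle). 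The structural identities to keep in mind throughout are $\tilde h(x)=\tilde g(Ax)$ in part $(a)$ and $h(x)=g(Ax)$ in part $(b)$, where $\tilde g$ and $g$ are the paranorms on $l(p)$ and on $\{l_\infty(p),c(p),c_0(p)\}$ recalled in Section 1.

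For part $(a)$ I would verify the paranorm axioms for $\tilde h$ by transporting them from $\tilde g$ through the linearity of $A$. Nonnegativity and $\tilde h(0)=0$ are immediate; symmetry follows from $\tilde h(-x)=\tilde g(-Ax)=\tilde g(Ax)=\tilde h(x)$; subadditivity from $\tilde h(x+x')=\tilde g(Ax+Ax')\leq\tilde g(Ax)+\tilde g(Ax')=\tilde h(x)+\tilde h(x')$, using linearity of $A$ and subadditivity of $\tilde g$. The implication $\tilde h(x)=0\Rightarrow x=0$ uses that $\tilde g$ is total on $l(p)$ together with the injectivity of the triangle $A$. Completeness is then the standard matrix-domain argument: given a $\tilde h$-Cauchy sequence $(x^{(i)})$, the image sequence $(Ax^{(i)})$ is $\tilde g$-Cauchy in $l(p)$, hence converges to some $y\in l(p)$; setting $x:=A^{-1}y\in w$ gives $Ax=y\in l(p)$, so $x\in l(r,s,t,p;B)$, and $\tilde h(x^{(i)}-x)=\tilde g(Ax^{(i)}-y)\to0$, which shows $x^{(i)}\to x$ in the space.

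Part $(b)$ runs identically, replacing $\tilde g$ by $g$ and $l(p)$ by each of $l_\infty(p)$, $c(p)$, $c_0(p)$; the completeness of these base spaces under $g$ recalled in Section 1 is what is transported along $A$, and for $c(p),c_0(p)$ one also checks that the limit $y$ stays in the relevant base space, which it does since these are closed subspaces of the complete space under $g$. Part $(c)$ is a direct instance of the principle recalled in Section 2 that if $T$ is a triangle and $X$ is a $BK$ space then $X_T$ is a $BK$ space with $\|x\|_{X_T}=\|Tx\|_X$: taking $X=l_p$ (a $BK$ space for $1\leq p<\infty$) and $T=A$, we get that $l_p(r,s,t;B)=(l_p)_A$ is a $BK$ space with $\|x\|_{l_p(r,s,t;B)}=\|Ax\|_{l_p}=\|y\|_{l_p}$.

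I expect the only genuinely delicate point to be the continuity of scalar multiplication in the paranorm of part $(a)$, because of the variable exponents $p_k$ and the outer $1/M$ power in $\tilde g$. The approach here is not a direct estimate but inheritance: if $\lambda_i\to\lambda$ and $\tilde h(x^{(i)}-x)\to0$, then $\tilde g(Ax^{(i)}-Ax)\to0$, so $Ax^{(i)}\to Ax$ in $l(p)$, and since $\tilde g$ is a paranorm on $l(p)$ the corresponding property there gives $\tilde g(\lambda_iAx^{(i)}-\lambda Ax)\to0$, i.e. $\tilde h(\lambda_ix^{(i)}-\lambda x)\to0$ by linearity of $A$. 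Everything else reduces cleanly to the two established ingredients: that the base spaces are complete paranormed (respectively $BK$) spaces, and that $A=A(r,s,t;B)$ is a triangle.
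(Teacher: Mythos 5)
Your proposal is correct, but it takes a genuinely different route from the paper's. The paper proves part (a) from first principles: subadditivity of $\tilde h$ via Minkowski's inequality applied directly to the defining sums, continuity of scalar multiplication via the explicit estimate $\tilde h(\alpha_m x^{m}-\alpha x)\leq |\alpha_m-\alpha|\,\tilde h(x^{m})+|\alpha|\,\tilde h(x^{m}-x)$, and completeness by a coordinatewise argument in which a $\tilde h$-Cauchy sequence makes each scalar sequence $\big((A(r,s,t;B)x^{m})_n\big)_m$ Cauchy, the coordinatewise limits are then named $(A(r,s,t;B)x)_n$, and letting $l\rightarrow\infty$ in the Cauchy estimate yields convergence in $\tilde h$; parts (b) and (c) are declared analogous. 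You instead transport the entire structure through the triangle $A=A(r,s,t;B)$ via the identity $\tilde h=\tilde g\circ A$, the invertibility of triangles established in Section 3, and the known completeness of the Maddox spaces. Your route buys three things: it unifies (a), (b), (c) under one principle (part (c) being in both treatments the Wilansky fact quoted in Section 2); it is cleaner exactly where the paper glosses, namely the existence of a sequence $x$ with $Ax=y$ (you set $x:=A^{-1}y$ explicitly, while the paper names the coordinatewise limit ``$(A(r,s,t;B)x)_n$'' without exhibiting such an $x$, which tacitly uses the triangle inverse anyway); and it avoids the unstated interchange of limit and infinite sum needed to pass from the paper's coordinatewise limits to convergence in $\tilde h$, since you invoke completeness of $l(p)$ as a whole. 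What the paper's route buys is independence from the isometric identification with $l(p)$, which it only formalizes afterwards in Theorem 4.2 --- your argument is essentially Theorem 4.2 used ahead of time, legitimate here because the inverse of $A(r,s,t)$ is computed independently in Section 3, but it makes Theorem 4.2 a corollary of your proof rather than a later result. One shared caveat: in part (b) the completeness of $l_\infty(p)$, $c(p)$, $c_0(p)$ under $g$ holds only when $\inf_k p_k>0$ (as Section 1 itself records), a hypothesis that both you and the paper inherit silently.
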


\begin{proof}We prove only the part $(a)$ of this theorem. In
a similar way, we can prove the other parts.
\\Let
$x$, $y\in l(r, s, t, p; B)$. Using Minkowski's inequality
\begin{align}
&\Big(\displaystyle\sum_{n=0}^{\infty}\Big|\frac{1}{r_n}  \Big(\displaystyle \sum_{k=0}^{n-1} (s_{n-k}t_k u + s_{n-k-1}t_{k+1}v)(x_k+ y_k) + s_0t_nu(x_n+ y_n)\Big) \Big|^{p_n} \Big)^\frac{1}{M} \nonumber \\
&~~~~~~\leq  \Big(\displaystyle\sum_{n=0}^{\infty}\Big|\frac{1}{r_n}  \Big(\displaystyle \sum_{k=0}^{n-1} (s_{n-k}t_k u + s_{n-k-1}t_{k+1}v)x_k + s_0t_nux_n\Big) \Big|^{p_n} \Big)^\frac{1}{M} \nonumber \\
&~~~~~~~~~~~~~~+ \Big(\displaystyle\sum_{n=0}^{\infty}\Big|\frac{1}{r_n}  \Big(\displaystyle \sum_{k=0}^{n-1} (s_{n-k}t_k u + s_{n-k-1}t_{k+1}v)y_k + s_0t_nuy_n\Big)\Big|^{p_n} \Big)^\frac{1}{M}.
\end{align}
So, we have $x+y \in l(r, s, t, p; B)$.
Let $\alpha$ be any scalar. Since $|\alpha|^{p_k}\leq \max\{1, |\alpha|^M\}$, we have
 $\tilde{h}(\alpha x)\leq \max\{1, |\alpha|\}\tilde{h}(x)$. Hence $\alpha x \in l(r, s, t, p; B)$.
It is trivial to show that $\tilde{h}(\theta)=0$, $\tilde{h}(-x)=\tilde{h}(x)$ for all $x\in l(r, s, t, p; B)$
 and subadditivity of $\tilde{h}$, i.e., $\tilde{h}(x+y)\leq \tilde{h}(x)+ \tilde{h}(y)$ follows from $(4.1)$.\\
 Next we show that the scalar multiplication is continuous.
Let $(x^m)$ be a sequence in $l(r, s,
t, p; B)$, where $x^{m}= (x_k^{m})=(x_0^{m}, x_1^{m}, x_2^{m}, \ldots )$ $\in l(r, s, t, p; B)$ for each $m\in \mathbb{N}_0$ such that $\tilde{h}(x^{m}-x)\rightarrow0$ as
$m\rightarrow\infty$ and $(\alpha_m)$ be a sequence of scalars such
that $\alpha_m\rightarrow \alpha$ as $m\rightarrow\infty$. Then
$\tilde{h}(x^{m})$ is bounded that follows from the following
inequality
\begin{center}
$\tilde{h}(x^{m})\leq \tilde{h}(x)+ \tilde{h}(x-x^{m})$.
\end{center}
Now consider
\begin{align*}
\tilde{h}(\alpha_mx^{m}-\alpha x) & = \bigg(\displaystyle\sum_{n=0}^{\infty}\Big|\frac{1}{r_n}  \Big(\displaystyle \sum_{k=0}^{n-1} (s_{n-k}t_k u + s_{n-k-1}t_{k+1}v)(\alpha_mx^{m}_k-\alpha x_k) + s_0t_nu(\alpha_mx^{m}_n-\alpha x_n)\Big)\Big|^{p_n}\bigg)^\frac{1}{M} \\
& \leq |\alpha_m-\alpha| \tilde{h}(x^{m}) + |\alpha| \tilde{h}(x^{m}-x)\rightarrow 0 \mbox{~as~} m\rightarrow\infty.
\end{align*}This shows that the scalar multiplication is continuous. Hence $\tilde{h}$ is a paranorm on the space $l(r, s, t, p; B)$.\\
Now we prove the completeness of the space $l(r, s, t, p;
B)$ with respect to the paranorm $\tilde{h}$. Let
$(x^{m})$ be a Cauchy sequence in $l(r,
s, t, p; B)$. So for every $\epsilon>0$ there is a
$n_0\in \mathbb{N}$ such that
\begin{center}
$\tilde{h}(x^{m}-x^{l})< \frac{\epsilon}{2}$ ~ for all $m, l\geq
n_0$.
\end{center}
Then by definition for each $n \in \mathbb{N}_0$, we have
\begin{equation}
\displaystyle \bigg|(A(r, s, t; B)x^{m})_n -(A(r, s, t; B)x^{l})_n \bigg| \leq
 \bigg(\displaystyle\sum_{n=0}^{\infty}\bigg|(A(r, s, t; B)x^{m})_n -(A(r, s, t; B)x^{l})_n \bigg|^{p_n} \bigg)^{1 \over M} <
\frac{\epsilon}{2}
\end{equation}
~\mbox{for all}~ $m, l\geq n_0,$ which implies that the
sequence $((A(r, s, t; B)x^{m})_n)$ is a
Cauchy sequence of scalars for each fixed $n \in \mathbb{N}_0$
and hence converges for each $n$. We write
$$ \displaystyle\lim_{m \rightarrow \infty} (A(r, s, t; B)x^{m})_n = (A(r, s, t; B)x)_n \quad ( n \in \mathbb{N}_0).$$
Now taking $l \rightarrow \infty$ in (4.2), we obtain
$$\bigg(\displaystyle\sum_{n=0}^{\infty}\bigg|(A(r, s, t; B)x^{m})_n -(A(r, s, t; B)x)_n \bigg|^{p_n} \bigg)^{1 \over M} <
\epsilon$$
~\mbox{for all}~ $m \geq n_0$ and each fixed $n \in \mathbb{N}_0$. Thus $(x^{m})$ converges to $x$ in $l(r, s, t, p; B)$
with respect to $\tilde{h}$.\\
 To show  $x\in l(r, s, t, p; B)$, we take
\begin{align*}
&\Big(\displaystyle\sum_{n=0}^{\infty}\Big|\frac{1}{r_n}  \Big(\displaystyle \sum_{k=0}^{n-1} (s_{n-k}t_k u + s_{n-k-1}t_{k+1}v)x_k + s_0t_nux_n\Big)  \Big|^{p_n}\Big)^\frac{1}{M}\\
&= \Big(\displaystyle\sum_{n=0}^{\infty}\Big|\frac{1}{r_n}  \Big(\displaystyle \sum_{k=0}^{n-1} (s_{n-k}t_k u + s_{n-k-1}t_{k+1}v)(x_k- x_{k}^{m}+ x_{k}^{m}) + s_0t_nu(x_n- x_{n}^{m}+ x_{n}^{m})\Big)\Big|^{p_n}\Big)^\frac{1}{M}\\
&\leq \tilde{h}(x -x^{m}) + \tilde{h}(x^{m}),
\end{align*}
which is finite for all $m \geq n_0$. Therefore $x\in l(r, s, t, p; B)$. This completes the proof.
\end{proof}
\begin{thm}
The sequence spaces $X(r,s,t, p; B)$ for $X \in \{
l_{\infty}(p)$, $ c(p), c_{0}(p), l(p)\}$ are linearly isomorphic to
the spaces $X \in \{ l_{\infty}(p)$, $ c(p), c_{0}(p), l(p)\}$
respectively, i.e., $l_{\infty}(r, s, t, p ; B) \cong
l_{\infty}(p)$,  $c(r, s, t, p ; B) \cong c(p)$, $c_{0}(r, s,
t, p ; B) \cong c_{0}(p)$ and $l(r,s,t, p; B) \cong l(p)$.
\end{thm}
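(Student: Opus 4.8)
The plan is to exhibit, for each choice of $X$, an explicit linear bijection between $X(r,s,t,p;B)$ and $X$ and then check that it preserves the paranorm (the norm in part $(c)$), so that it is automatically a homeomorphism. The natural candidate is the map $T$ sending $x$ to its $A(r,s,t;B)$-transform $y=A(r,s,t;B)x$, whose coordinates are given in (\ref{3eq}). By the very definition $X(r,s,t,p;B)=X_{A(r,s,t;B)}$, the image $Tx=y$ lies in $X$ whenever $x\in X(r,s,t,p;B)$, so $T$ maps $X(r,s,t,p;B)$ into $X$, and its linearity is immediate from the linearity of matrix multiplication.

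For injectivity I would use that $A(r,s,t;B)=A(r,s,t)\cdot B$ is a product of two triangles and hence itself a triangle; since a triangle has nonzero diagonal entries, $Tx=0$ forces $x=0$ by forward substitution. For surjectivity, recall from the preliminaries that a triangle admits a unique inverse, which is again a triangle. Concretely I would invert the two factors separately: the inverse of $A(r,s,t)$ is the triangle $\tilde B$ displayed in the excerpt, and the inverse of the bidiagonal matrix $B=B(u,v)$ (with $u,v\neq 0$) is the triangle whose entries are $(-v)^{n-k}/u^{n-k+1}$ for $0\le k\le n$ and $0$ otherwise, obtained by solving $ux_n+vx_{n-1}=z_n$ recursively. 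Then $x=B^{-1}\tilde B\,y$ solves $A(r,s,t;B)x=y$ for any prescribed $y\in X$, and since $A(r,s,t;B)x=y\in X$ by construction, this $x$ belongs to $X(r,s,t,p;B)$; hence $T$ is onto.

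Finally, to promote this algebraic bijection to a topological isomorphism, I would observe that $T$ carries $\tilde h$ exactly onto the paranorm $\tilde g$ on $l(p)$ (respectively $h$ onto $g$ on $l_\infty(p),c(p),c_0(p)$, and the norm of part $(c)$ onto $\|\cdot\|_{l_p}$), because $\tilde h(x)=\tilde g\big(A(r,s,t;B)x\big)=\tilde g(Tx)$ follows directly from comparing the defining formulas. Thus $T$ is paranorm-preserving, so both $T$ and $T^{-1}$ are continuous, and the four spaces are linearly isomorphic to $l_\infty(p),c(p),c_0(p),l(p)$ respectively, as claimed. The only step demanding genuine care is the inversion underlying surjectivity: one must confirm that the formal product $B^{-1}\tilde B$ is a two-sided inverse of $A(r,s,t;B)$ on the relevant space. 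Because all matrices involved are triangular, this reduces to a finite computation at each coordinate with no convergence issues, so it is the main bookkeeping obstacle rather than a conceptual one.
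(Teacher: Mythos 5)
Your proposal is correct and takes essentially the same route as the paper: the paper also defines $T$ as the $A(r,s,t;B)$-transform, gets injectivity from $Tx=0\Rightarrow x=0$, proves surjectivity by writing $x=B^{-1}\cdot A(r,s,t)^{-1}y$ (its explicit formula $(4.3)$ is exactly your composed inverse, with the entries $(-v)^{n-k}/u^{n-k+1}$ of $B^{-1}$), and finishes by observing that $T$ is paranorm-preserving. The only difference is cosmetic: the paper writes out the argument for $X=l(p)$ and declares the remaining cases analogous, whereas you phrase it uniformly for all four spaces.
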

\begin{proof}
We prove the theorem only for the case when $X=l(p)$. To prove
this we need to show that there exists a bijective linear map from
$l(r, s, t, p ; B)$ to $l(p)$. Now we define a map $T:l(r, s, t,
p ; B) \rightarrow l(p)$ by $x \mapsto Tx = y=(y_n)$, where
\begin{center}\label{k1}
$y_n = \frac{1}{r_n}\Big(\displaystyle \sum_{k=0}^{n-1} (s_{n-k}t_k u + s_{n-k-1}t_{k+1}v)x_k + s_0t_nux_n\Big),~~~ n \geq 0.$
\end{center}
The linearity of $T$ is
trivial. It is easy to see that $Tx=0$ implies $x=0$. Thus $T$ is
injective. To prove $T$ is surjective, let $y \in l(p)$. Since $y=
(A(r, s, t). B)x$, i.e., $x= (A(r, s, t).
B)^{-1}y=B^{-1}.A(r, s, t)^{-1}y $, so we can get a sequence
$x=(x_{n})$ as
\begin{equation}
  x_n = \displaystyle\sum_{j=0}^{n}\sum_{k=j}^{n} (-1)^{k-j} \frac{(-v)^{n-k}}{u^{n-k+1}}\frac{D_{k-j}^{(s)}}{t_{k}}r_jy_j, \quad n \in \mathbb{N}_0.
\end{equation}
 Then
\begin{center}
$\tilde{h}(x)=\Big(\displaystyle\sum_{n=0}^{\infty}\Big|\frac{1}{r_n}  \Big(\displaystyle \sum_{k=0}^{n-1} (s_{n-k}t_k u + s_{n-k-1}t_{k+1}v)x_k + s_0t_nux_n\Big)\Big|^{p_n}\Big)^\frac{1}{M}=\Big(\displaystyle\sum_{n=0}^{\infty}\big|y_n\big|^{p_n}\Big)^\frac{1}{M}=\tilde{g}(y)<\infty$.
\end{center}
Thus $x \in l(r, s, t, p; B)$ and this shows that $T$ is
surjective. Hence $T$ is a linear bijection from $l(r, s, t, p;
B)$ to $l(p)$. Also $T$ is paranorm preserving. This completes
the proof.
\end{proof}

Since $X(r,s,t, p; B) \cong X$ for $X \in \{c_{0}(p), c(p), l(p)\}$, the Schauder bases of the sequence spaces $X(r,s,t, p; B)$ are the inverse image of the bases of $X$ for $X \in \{c_{0}(p), c(p), l{(p)}\}$. So, we have the following theorem without proof.
\begin{thm}
Let $\nu_k=(A(r,s, t; B)x)_k$, $k\in \mathbb{N_{\rm
0}}$. For each $j \in \mathbb{N_{\rm 0}}$, define the sequence
$b^{(j)}=(b_{n}^{(j)})_{n\in\mathbb{N_{\rm 0}}}$ of the elements of
the space $c_0(r, s, t, p; B)$ as
$$
b_n^{(j)} = \left\{
\begin{array}{ll}
 \displaystyle\sum_{k=j}^{n} (-1)^{k-j} \frac{(-v)^{n-k}}{u^{n-k+1}}\frac{D_{k-j}^{(s)}}{t_{k}}r_j & \mbox{if}~~   0\leq j \leq n \\
0 & \mbox{if} ~~ j>n.
\end{array}
\right.
$$ and
$$ b_n^{(-1)} = \displaystyle \sum_{j=0}^{n}\sum_{k=j}^{n} (-1)^{k-j} \frac{(-v)^{n-k}}{u^{n-k+1}}\frac{D_{k-j}^{(s)}}{t_{k}}r_j.  $$
Then the followings are true:\\
 $(i)$ The sequence $(b^{(j)})_{j=0}^{\infty}$ is a basis for the space $X(r,s,t, p; B)$ for $X \in \{c_{0}(p), l(p)\}$ and any
 $x\in  X(r,s,t, p; B)$ has a unique representation of the form
\begin{center}
$x=\displaystyle\sum_{j=0}^{\infty}\nu_jb^{(j)}$.
\end{center}
$(ii)$ The set $(b^{(j)})_{j=-1}^{\infty}$ is a basis for the space $c(r,
s,t,p; B)$ and any $x\in c(r,
s,t,p; B)$ has a unique
representation of the form
\begin{center}
$x=\ell b^{(-1)}+ \displaystyle\sum_{j=0}^{\infty}(\nu_j-\ell)b^{(j)}$,
\end{center}where $\ell=\displaystyle\lim_{n\rightarrow\infty}(A(r,s, t;B)x)_n$.
\end{thm}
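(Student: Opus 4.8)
The plan is to transport the canonical Schauder bases of the Maddox spaces $c_0(p)$, $l(p)$, and $c(p)$ through the paranorm-preserving isomorphism of Theorem 4.2. Recall that $T\colon X(r,s,t,p;B)\to X$, $Tx=A(r,s,t;B)x$, is a linear bijection that preserves the paranorm, so both $T$ and $T^{-1}$ are continuous, i.e. $T$ is a topological isomorphism. Since a topological isomorphism carries a Schauder basis onto a Schauder basis, it suffices to write down the canonical bases in the base spaces and compute their preimages under $T^{-1}$ using the inversion formula (4.3).

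First I would record the canonical bases. For $X\in\{c_0(p),l(p)\}$ the coordinate vectors $(e^{(k)})_{k=0}^{\infty}$ form a Schauder basis: for $y\in X$ the tails $\sum_{k>N}|y_k|^{p_k}$ (in the $l(p)$ case) or $\sup_{k>N}|y_k|^{p_k/M}$ (in the $c_0(p)$ case) tend to $0$, so $y=\sum_k y_k e^{(k)}$ converges in the ambient paranorm. For $c(p)$ the basis is $(e,e^{(0)},e^{(1)},\dots)$, and every $y\in c(p)$ with $\ell=\lim_k y_k$ has the unique expansion $y=\ell e+\sum_k(y_k-\ell)e^{(k)}$, since $y-\ell e\in c_0(p)$.

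Next I would identify the preimages. Specializing $y=e^{(j)}$ in (4.3) kills every term except $j'=j$ and yields exactly the stated $b^{(j)}=T^{-1}(e^{(j)})$; likewise $y=e$ produces the displayed $b^{(-1)}=T^{-1}(e)$. For part $(i)$, given $x\in X(r,s,t,p;B)$ set $y=Tx$, so $\nu_k=y_k$; from $y=\sum_j\nu_j e^{(j)}$ in $X$, applying the continuous map $T^{-1}$ gives $x=\sum_j\nu_j b^{(j)}$ in $X(r,s,t,p;B)$. Uniqueness is immediate: if $x=\sum_j\mu_j b^{(j)}$, then applying the continuous linear $T$ and using $Tb^{(j)}=e^{(j)}$ forces $y=\sum_j\mu_j e^{(j)}$, whence $\mu_j=y_j=\nu_j$ by uniqueness of the expansion in $X$. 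Part $(ii)$ is the same argument with the $c(p)$ basis: $y=\ell e+\sum_j(\nu_j-\ell)e^{(j)}$ maps under $T^{-1}$ to $x=\ell b^{(-1)}+\sum_j(\nu_j-\ell)b^{(j)}$ with $\ell=\lim_n(A(r,s,t;B)x)_n$, and uniqueness transfers verbatim.

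The only genuinely delicate points will be, first, justifying that the coordinate vectors really do constitute Schauder bases of the paranormed Maddox spaces (the partial-sum convergence must hold in the paranorm, which relies on completeness of these spaces and hence, for $c_0(p)$ and $c(p)$, on $\inf_k p_k>0$), and second, keeping the double-sum bookkeeping in (4.3) straight when substituting $y=e^{(j)}$ and $y=e$, so that the resulting coefficients coincide with the sequences $b^{(j)}$ and $b^{(-1)}$ of the statement. Everything else is a formal consequence of $T$ being a paranorm isomorphism, which is exactly why the result may be stated without a separate proof.
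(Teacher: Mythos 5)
Your proposal is correct and is exactly the argument the paper has in mind: the paper states this theorem without proof, remarking that since $X(r,s,t,p;B)\cong X$ via the paranorm-preserving map $T$ of Theorem 4.2, the Schauder bases of the new spaces are the inverse images under $T$ of the canonical bases of $c_0(p)$, $l(p)$ and $c(p)$, with the preimages $b^{(j)}=T^{-1}(e^{(j)})$ and $b^{(-1)}=T^{-1}(e)$ computed from the inversion formula (4.3). Your writeup simply makes explicit the transport-of-basis details that the paper leaves implicit.
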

\begin{rmrk}
In particular, if we choose $r_{n}=\frac{1}{r_{n}'}$,
$t_{n}=s_{n}'$, $s_{n}=1$ $\forall n$, then the sequence space $ l(r,
s,t, p; B)$ reduces to $ l(r', s', p; B)$ \cite{BASA1}. With this choice of $s_{n}$,
we have $D_{0}^{(s)} =D_{1}^{(s)}=1$ and $D_{n}^{(s)} =0 $ for $ n
\geq 2$. Thus the sequences $b^{(j)} = ( b_{n}^{(j)})_{n \in
\mathbb{N_{\rm 0}}}$  for $j = 0, 1, \ldots$ reduce to
\begin{displaymath}
  b_{n}^{(j)}  = \left\{
     \begin{array}{ll}
        {  \frac{(-1)^{n-k}}{r_j'}\Big(\frac{v^{n-j}}{u^{n-j+1}}\frac{1}{s_j'} + \frac{v^{n-j-1}}{u^{n-j}}\frac{1}{s_{j+1}'}\Big) }& \mbox{if} \quad 0\leq j < n \\
        \frac{1}{u}\frac{1}{r_{n}'s_{n}'}              & \mbox{if}  \quad  j=n\\
        0                                 & \mbox{if} \quad j > n.
     \end{array}
   \right.
\end{displaymath}
 The sequence $( b^{(j)} )$ is a Schauder basis for the space $ l(r',
s', p; B)$ studied in \cite{BASA1}.
\end{rmrk}

\subsection{The $\alpha$-, $\beta $-, $\gamma$-duals of $X(r,s, t, p; B)$ for
$X\in\{l_\infty(p), c(p), c_0(p), l(p)\}$}
In $1999$, K. G. Grosse-Erdmann \cite{GRO} has characterized the matrix transformations between the sequence spaces of Maddox, namely, $l_\infty(p), c(p), c_0(p)$ and $ l(p)$.
To compute the $\alpha$-, $\beta $-, $\gamma$-duals of $X(r,s, t, p; B)$ for
$X\in\{l_\infty(p), c(p), c_0(p), l(p)\}$ and to characterize the classes of some matrix mappings between these spaces, we list the following conditions.\\
Let $L$ denotes a natural number, $F$ be a nonempty finite subset of $\mathbb{N}$ and $A=(a_{nk})_{n,k}$ be an
infinite matrix.  We consider $p_k'=\frac{p_k}{p_k-1}$ for $1< p_k < \infty$.
\begin{align}
&\displaystyle\sup_{F}\displaystyle\sup_{k}\Big|\displaystyle\sum_{n\in F} a_{nk}
\Big|^{p_k}<\infty\\
& \displaystyle\sup_{F}
\displaystyle\sum_{k}\Big|\displaystyle\sum_{n\in F} a_{nk}
L^{-1} \Big|^{p_k'}<\infty
\mbox{~for some~} L\\
& \displaystyle\lim_{n}a_{nk}=0 \mbox{~for every~} k\\
&\displaystyle\sup_{n}\sup_{k}\displaystyle|a_{nk}L|^{p_k}<\infty \mbox{~for all}~ L\\
&\displaystyle\sup_{n}\sum_{k}\displaystyle|a_{nk}L|
^{{p_k}'}<\infty \mbox{~for all}~ L\\
&\displaystyle\sup_{n}\sup_{k}\displaystyle|a_{nk}|^{p_k}<\infty \\
& \exists~ (\alpha_k) \displaystyle\lim_{n\rightarrow\infty} a_{nk}=\alpha_k \mbox{~for all}~ k\\
&\exists~ (\alpha_k) \displaystyle\sup_{n}\sup_{k}\Big(\displaystyle|a_{nk}-\alpha_k|
L\Big)^{p_k}<\infty \mbox{~for all}~ L\\
& \exists~ (\alpha_k) \displaystyle\sup_{n}\sum_{k}\Big(\displaystyle|a_{nk}-\alpha_k|
L\Big)^{{p_k}'}<\infty \mbox{~for all}~ L\\
&\displaystyle\sup_{n}\sup_{k}\displaystyle|a_{nk}L^{-1}|^{p_k}<\infty \mbox{~for some}~ L\\
& \displaystyle\sup_{F}\displaystyle\sum_{n}\Big|\displaystyle\sum_{k\in F} a_{nk} L^{\frac{-1}{p_k}}\Big|<\infty \mbox{~for some}~ L\\
& \displaystyle\sum_{n}\Big|\displaystyle\sum_{k} a_{nk}\Big|<\infty\\
& \displaystyle\sup_{F}\displaystyle\sum_n\Big|\displaystyle\sum_{k\in F} a_{nk} L^{\frac{1}{p_k}}\Big|<\infty \mbox{~for all~} L\\
&\displaystyle\sup_{n}\displaystyle\sum_{k} |a_{nk}| L^{-\frac{1}{p_k}}<\infty \mbox{~for some}~ L
\end{align}
\begin{align}
&\displaystyle\sup_{n}\Big|\displaystyle\sum_{k} a_{nk}\Big|<\infty.\\
& \displaystyle\sup_{n}\displaystyle\sum_k|a_{nk}|
L^{\frac{1}{p_k}}<\infty \mbox{~for all}~ L\\
& \exists~ (\alpha_k) \displaystyle\sup_{n}\sum_{k}|a_{nk}-\alpha_k|L^{-\frac{1}{p_k}}<\infty \mbox{~for some}~ L\\
& \exists~ \alpha \displaystyle\lim_n\Big|\displaystyle\sum_{k}a_{nk}-\alpha\Big|=0\\
& \displaystyle\sup_{n} \sum_{k}|a_{nk}|L^{\frac{1}{p_k}}<\infty  \mbox{~for all}~ L\\
& \exists~ (\alpha_k) \displaystyle\lim_n \displaystyle\sum_{k} |a_{nk}-\alpha_k|L^{\frac{1}{p_k}}=0 \mbox{~for all}~ L\\
& \displaystyle\sup_{n}\displaystyle\sum_{k} |a_{nk} L^{-1}|^{p_k'}<\infty \mbox{~for some}~ L
\end{align}
\begin{lem}\cite{GRO}{\label{lem1}}
 $(a)$ if $1 < p_k \leq H < \infty$. Then we have\\
$(i)$ $A\in (l(p), l_1)$ if and only if $(4.5)$ holds.\\
$(ii)$ $A\in (l(p), c_0)$ if and only if $(4.6)$ and $(4.8)$ hold.\\
$(iii)$ $A \in (l(p), c)$ if and only if $(4.10)$, $(4.12)$ and $(4.24)$ hold.\\
$(iv)$ $A \in (l(p), l_\infty)$ if and only if $(4.24)$ holds.\\
$(b)$ if $0< p_k \leq 1$. Then we have\\
$(i)$ $A\in (l(p), l_1)$ if and only if $(4.4)$ holds.\\
$(ii)$ $A\in (l(p), c_0)$ if and only if $(4.6)$ and $(4.7)$ hold.\\
$(iii)$ $A \in (l(p), c)$ if and only if $(4.9)$, $(4.10)$ and $(4.11)$ hold.\\
$(iv)$ $A \in (l(p), l_\infty)$ if and only if $(4.13)$ holds.
\end{lem}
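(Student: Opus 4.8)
The plan is to handle all four target spaces $l_1$, $c_0$, $c$, $l_\infty$ inside one framework by reducing each assertion ``$A\in(l(p),V)$'' to two ingredients: a \emph{row condition} ensuring every row $A_n=(a_{nk})_k$ lies in the $\beta$-dual $(l(p))^\beta$, so that $(Ax)_n=\sum_k a_{nk}x_k$ is defined for every $x\in l(p)$, and a \emph{global condition} ensuring the sequence $((Ax)_n)_n$ actually lies in $V$. The analytic engine is the fundamental inequality for the Maddox spaces. For $1<p_k\le H$, with conjugate exponent $p_k'$ and any natural number $L$, Young's inequality applied to $|a_k x_k|=|a_kL^{-1}|\,|x_kL|$ yields $\sum_k|a_kx_k|\le \sum_k|a_kL^{-1}|^{p_k'}+L^{H}\sum_k|x_k|^{p_k}$; for $0<p_k\le 1$ one uses instead that $\sum_k|x_k|^{p_k}<\infty$ forces $|x_k|^{p_k}\to 0$, hence $l(p)\subseteq l_1$ and $\sum_k|a_kx_k|$ is controlled by $\sup_k|a_k|^{p_k}$. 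These two inequalities already explain the dichotomy between the conditions of type $(4.5)$, $(4.8)$, $(4.24)$ (involving $p_k'$-summability) in part $(a)$ and those of type $(4.4)$, $(4.9)$, $(4.13)$ (involving $\sup_k|\cdot|^{p_k}$) in part $(b)$.

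For the \emph{sufficiency} direction I would substitute the listed hypotheses directly into these inequalities. For instance, in $(a)(iv)$ condition $(4.24)$ bounds $\sup_n|(Ax)_n|$ for every $x\in l(p)$ and simultaneously gives $A_n\in(l(p))^\beta$, so $A\in(l(p),l_\infty)$ follows at once. For the $l_1$-targets $(i)$ the efficient device is the finite-subset reformulation: $A\in(l(p),l_1)$ iff $\sup_F\big\|\sum_{n\in F}A_n\big\|$ is finite in the appropriate dual seminorm, $F$ ranging over finite subsets of $\mathbb{N}$; evaluating this dual seminorm through the fundamental inequality reproduces exactly $(4.5)$, respectively $(4.4)$. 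The $c_0$- and $c$-targets are then obtained by adjoining to an $l_\infty$-type uniform bound the column information, namely existence of $\lim_n a_{nk}$ (as $0$ in $(4.6)$, as $\alpha_k$ in $(4.10)$), together with, in the convergent case, the recentred condition $(4.12)$ that forces $(Ax)_n\to\sum_k\alpha_k x_k$.

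For \emph{necessity} I would use that $l(p)$ and every target here are $FK$ spaces, so $A\in(l(p),V)$ renders the map $x\mapsto Ax$ continuous, and then read off the conditions by probing with distinguished sequences. Testing with the coordinate vectors $e_k\in l(p)$ forces the columns $(a_{nk})_n$ to converge or vanish, giving $(4.6)$ and $(4.10)$. Testing with sequences supported on a finite set $F$ and normalised to saturate the paranorm yields the finite-subset suprema $(4.4)$ and $(4.5)$ of the $l_1$-characterisations. The uniform bounds such as $(4.24)$ then come from the continuity of $A$ together with a uniform-boundedness argument over the bounded sets of $l(p)$, which are precisely the sets parametrised by the scale factor $L$.

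The hard part, I expect, will be pinning down the quantifier on the auxiliary parameter $L$: whether a condition must hold ``for some $L$'' or ``for all $L$''. This reflects the fact that $l(p)$ is only paranormed, not normable, so its bounded sets are not dilates of a single ball, and the correct quantifier has to be extracted from how the paranorm meshes with the fundamental inequality in each regime; distinguishing ``for some $L$'' in $(4.5)$ and $(4.24)$ from ``for all $L$'' in $(4.8)$ and $(4.12)$ is exactly what separates the $c_0$-, $c$- and $l_\infty$-criteria. A secondary subtlety is the convergent-target case, where one must verify that the column limits $(\alpha_k)$ themselves form an admissible sequence and that $(Ax)_n$ converges to $\sum_k\alpha_k x_k$ rather than merely remaining bounded; this is precisely why applying the $c_0$-criterion to the shifted matrix $(a_{nk}-\alpha_k)$ gives the ``recentred'' conditions $(4.11)$ and $(4.12)$ appearing in the statement for $c$.
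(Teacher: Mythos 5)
First, a structural point: the paper does not prove this lemma at all. It is quoted verbatim from Grosse-Erdmann \cite{GRO} as background (note the citation in the lemma header) and is then used as a black box in Theorems 4.4--4.8. So there is no proof in the paper to compare yours against; what can be judged is whether your sketch would stand on its own as a proof of the cited result.

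As an outline, your architecture is the standard one and essentially the one followed in \cite{GRO}: sufficiency via the Young-type inequality $|ab|\leq T\big(|aT^{-1}|^{p'}+|b|^{p}\big)$ (which the paper itself records as inequality (4.26) in the proof of Theorem 4.8), together with the inclusion $l(p)\subseteq l_1$ when $0<p_k\leq 1$; necessity of the column conditions (4.6) and (4.10) by testing with the unit vectors $e_k$; reduction of the target $c$ to the target $c_0$ by recentring the matrix at the column limits; and the finite-subset criterion for the target $l_1$. The genuine gap is that the necessity halves of the uniform conditions --- precisely the statements carrying the parameter $L$, namely (4.5), (4.8), (4.12), (4.24) in part (a) and (4.4), (4.7), (4.11), (4.13) in part (b) --- are never derived: you explicitly defer them (``the hard part, I expect, will be pinning down the quantifier on $L$''). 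This is not a peripheral subtlety; it is the entire mathematical content of the lemma, and historically it is exactly where published characterizations went wrong, which is why \cite{GRO} was written. Because $l(p)$ is a complete paranormed, generally non-normable space, ``continuity plus uniform boundedness over bounded sets'' does not by itself yield a single $L$ working uniformly in $n$; one needs either equicontinuity from the closed graph/uniform boundedness theorem in the FK setting converted quantitatively into the $L$-conditions, or an explicit gliding-hump construction producing, from the failure of (say) (4.24) for every $L$, a sequence $x\in l(p)$ with $Ax\notin l_\infty$. A second, smaller gap: your reduction of the $c$-case to the recentred $c_0$-criterion silently requires $\alpha=(\alpha_k)\in[l(p)]^{\beta}$, so that the splitting $\sum_k a_{nk}x_k=\sum_k(a_{nk}-\alpha_k)x_k+\sum_k\alpha_k x_k$ is legitimate; in part (a)(iii) this is exactly what the separate condition (4.24) encodes, and it must be proved, not assumed. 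As it stands, your proposal is a correct roadmap but not a proof.
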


\begin{lem}\cite{GRO}{\label{lem2}}
For $0 < p_k \leq H < \infty$. Then we have\\
$(i)$ $A \in (c_0(p), l_1)$ if and only if $(4.14)$ holds.\\
$(ii)$ $A\in (c(p), l_1)$ if and only if $(4.14)$ and $(4.15)$ hold .\\
$(iii)$ $A\in (l_{\infty}(p), l_1)$ if and only if $(4.16)$ holds.
\end{lem}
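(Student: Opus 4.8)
The three assertions concern matrix maps of the Maddox spaces $c_0(p)$, $c(p)$, $l_\infty(p)$ into $l_1$, and the plan is to treat them uniformly by exploiting that $l_1$ is a $BK$ space while each domain is a complete paranormed ($FK$) space. The starting observation is that if $A\in(X,l_1)$ then the induced map $L_A:X\to l_1$, $x\mapsto Ax$, is a well-defined linear map between $FK$ spaces and is therefore continuous; equivalently it is bounded on bounded sets and, through the uniform boundedness principle, the partial functionals $x\mapsto\sum_{n\in E}(Ax)_n$ over finite $E$ are equibounded on each bounded subset of $X$. I would split every case into necessity and sufficiency, and I would systematically reduce the weighted ($p_k$-dependent) problem to the classical case $p_k\equiv 1$ via the substitution $x_k=L^{\pm 1/p_k}z_k$.

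For necessity I would feed $L_A$ test sequences supported on an arbitrary finite set $F$ of column indices. For $l_\infty(p)$, take $x_k=L^{1/p_k}$ for $k\in F$ and $x_k=0$ otherwise; since $\sup_k|x_k|^{p_k}=L$ the paranorm $g(x)=\sup_k|x_k|^{p_k/M}=L^{1/M}$ is the same for every $F$, so this family is a bounded set and boundedness of $L_A$ forces $\sup_F\|Ax\|_1=\sup_F\sum_n|\sum_{k\in F}a_{nk}L^{1/p_k}|<\infty$; as $L$ is arbitrary this is exactly $(4.16)$ for all $L$. For $c_0(p)$ the null requirement is automatic on finitely supported sequences, and I would instead use one $0$-neighborhood $\{x:g(x)<\delta\}$ on which $\|Ax\|_1\le 1$; choosing $L$ with $L^{-1/M}<\delta$, the sequences $x_k=L^{-1/p_k}$ ($k\in F$) lie in this neighborhood for every $F$, which yields $(4.14)$ for that (some) $L$. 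The difference in the quantifier on $L$, ``for all $L$'' versus ``for some $L$'', is precisely the difference between testing over all bounded sets of $l_\infty(p)$ and over one $0$-neighborhood of $c_0(p)$. For $c(p)$, which contains $c_0(p)$ together with $e=(1,1,\dots)$, I would also apply $A$ to $e$: $Ae\in l_1$ gives $\sum_n|\sum_k a_{nk}|<\infty$, i.e. $(4.15)$, while the $c_0(p)$ part again gives $(4.14)$.

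For sufficiency I would argue in reverse. Writing any $x\in l_\infty(p)$ with $\sup_k|x_k|^{p_k}\le L$ as $x_k=L^{1/p_k}z_k$ with $\|z\|_\infty\le 1$ reduces the claim $Ax\in l_1$ to $\widetilde Az\in l_1$ for the weighted matrix $\widetilde a_{nk}=a_{nk}L^{1/p_k}$ with $z\in l_\infty$; since $(4.16)$ for this $L$ is exactly the classical Schur-type condition $\sup_F\sum_n|\sum_{k\in F}\widetilde a_{nk}|<\infty$ characterizing $(l_\infty,l_1)$, the conclusion follows. The same substitution with $L^{-1/p_k}$ and $z\in c_0$ handles $c_0(p)$ from $(4.14)$. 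For $c(p)$ I would decompose $x=\ell e+x_0$ with $\ell=\lim_k x_k$ and $x_0\in c_0(p)$, so that $Ax=\ell\,Ae+Ax_0$; then $(4.15)$ makes $Ae\in l_1$ and $(4.14)$ makes $Ax_0\in l_1$, whence $Ax\in l_1$.

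The main obstacle I anticipate lies in the necessity half and is twofold. First, one must upgrade the pointwise statement ``$Ax\in l_1$ for each admissible $x$'' to the uniform statement ``$\sup_F(\cdots)<\infty$''; this is where continuity of $L_A$ (equivalently the uniform boundedness principle on the $FK$ space $X$, which is what completeness buys us) is essential, and it must be applied to a family of test sequences that is genuinely bounded in $X$ uniformly in $F$. Second, the conditions $(4.14)$ and $(4.16)$ place the modulus outside the column sum $\sum_{k\in F}$, whereas the crude estimate $\sum_n|\sum_k a_{nk}x_k|\le\sum_n\sum_k|a_{nk}||x_k|$ places it inside; bridging the two requires the finite-subset (sign-optimization) reduction that is the technical heart of the classical $(l_\infty,l_1)$, $(c_0,l_1)$ and $(c,l_1)$ theorems, and it is exactly this reduction that I would either cite from the literature or reprove by choosing $F$ to separate the positive and negative parts of each row.
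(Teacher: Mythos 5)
First, a point of reference: the paper contains no proof of this lemma at all; it is quoted from Grosse-Erdmann \cite{GRO}. So your proposal can only be measured against the standard literature proof, whose core device---reducing the Maddox spaces to the classical spaces $c_0$, $c$, $l_\infty$ via the substitution $x_k=L^{\pm 1/p_k}z_k$ and then invoking the classical characterizations of $(c_0,l_1)$, $(c,l_1)$, $(l_\infty,l_1)$---is exactly the one you use. Your sufficiency arguments are correct as written, as is the decomposition $x=\ell e+x_0$ with $x_0\in c_0(p)$ behind part $(ii)$. One remark: the ``modulus inside versus modulus outside'' bridge you worry about is an issue only in the sufficiency half (it is the content of the classical theorems you cite), not in the necessity half, where testing against weighted indicators of finite sets produces the modulus-outside quantity directly.

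The genuine gap is in your necessity argument for $l_\infty(p)$, and more broadly in the framing ``each domain is a complete paranormed ($FK$) space, hence $L_A$ is continuous/bounded.'' The hypothesis of the lemma is only $0<p_k\leq H<\infty$; it does not exclude $\inf_k p_k=0$, and in that case $l_\infty(p)$ and $c(p)$ are not paranormed spaces at all (for $x=e$ one has $g(\lambda e)=\sup_k|\lambda|^{p_k/M}=1$ for every $0<|\lambda|<1$, so scalar multiplication is discontinuous at the origin) and are not complete with $g$---the paper itself records in Section 1 that these spaces are complete paranormed if and only if $\inf p_k>0$. So continuity of $L_A$, boundedness on bounded sets, and the uniform boundedness principle are unavailable precisely where you invoke them in case $(iii)$; even when $\inf_k p_k>0$ you would still owe the check that a paranorm-bounded family is bounded in the topological-vector-space sense. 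The repair is cheap and is your own sufficiency device run in reverse: for each fixed $L$, the map $z\mapsto(L^{1/p_k}z_k)$ sends $l_\infty$ into $l_\infty(p)$, so $A\in(l_\infty(p),l_1)$ forces the weighted matrix $(a_{nk}L^{1/p_k})$ to lie in $(l_\infty,l_1)$, and the classical necessity argument (closed graph theorem between the Banach spaces $l_\infty$ and $l_1$, tested on indicators of finite sets) yields $(4.16)$ for that $L$; since $L$ was arbitrary, $(4.16)$ follows in full. Your neighborhood argument for $c_0(p)$ is sound, but for a different reason than the one you give: $c_0(p)$ is in fact a complete paranormed space for \emph{every} bounded $p$ (this specific fact, not the blanket $FK$ claim, is what must be cited), and part $(ii)$'s necessity then follows, as you say, by restriction to $c_0(p)\subset c(p)$ together with $Ae\in l_1$.
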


\begin{lem}\cite{GRO}{\label{lem3}}
For $0 < p_k \leq H < \infty$. Then we have\\
$(i)$ $A \in (c_0(p), l_{\infty})$ if and only if $(4.17)$ holds.\\
$(ii)$ $A\in (c(p), l_{\infty})$ if and only if $(4.17)$ and $(4.18)$ hold .\\
$(iii)$ $A\in (l_{\infty}(p), l_{\infty})$ if and only if $(4.19)$
holds.
\end{lem}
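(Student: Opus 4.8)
The plan is to read each of the three parts as an assertion $A\in(X(p),l_\infty)$ and to decompose it into two requirements: that each row $A_n=(a_{nk})_k$ belong to the $\beta$-dual $X(p)^\beta$, so that $(Ax)_n=\sum_k a_{nk}x_k$ is defined for every $x\in X(p)$, and that the sequence $((Ax)_n)_n$ be bounded for every such $x$. As a preliminary I would identify the relevant $\beta$-duals and, for a single coefficient sequence $a$, the extremal value of $|\sum_k a_k x_k|$ over the natural bounded sets of each space. For $l_\infty(p)$ the natural sets are the balls $\{x:|x_k|\le L^{1/p_k}\ \forall k\}$, on which choosing $x_k=L^{1/p_k}\,\mathrm{sgn}(\overline{a_k})$ gives the extremal value $\sum_k|a_k|L^{1/p_k}$, exactly the quantity in $(4.19)$; for $c_0(p)$ every $x$ satisfies $|x_k|\le L^{-1/p_k}$ for all large $k$ and each fixed $L$, which produces the quantity in $(4.17)$. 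Condition $(4.18)$ is simply the value of the functional on $e$.

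Sufficiency is the routine half. For $(iii)$, given $x\in l_\infty(p)$, since $\sup_k|x_k|^{p_k}<\infty$ I may choose an integer $L$ with $|x_k|\le L^{1/p_k}$ for all $k$, whence $|(Ax)_n|\le\sum_k|a_{nk}|L^{1/p_k}\le\sup_n\sum_k|a_{nk}|L^{1/p_k}<\infty$ by $(4.19)$, the same bound also giving absolute convergence of each row. For $(i)$, using the $L$ supplied by $(4.17)$ and the fact that $|x_k|\le L^{-1/p_k}$ for $k\ge k_0$ (with $k_0$ independent of $n$), the tail is controlled by $(4.17)$ while the finite head $\sum_{k<k_0}|a_{nk}||x_k|$ is bounded uniformly in $n$ because $L^{-1/p_k}$ is bounded below on $k<k_0$. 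For $(ii)$ I split $x=\ell e+(x-\ell e)$ with $x-\ell e\in c_0(p)$, estimating the first summand by $|\ell|\sup_n|\sum_k a_{nk}|$ via $(4.18)$ and the second by the $c_0(p)$ bound via $(4.17)$.

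For necessity, the two ``extra'' conditions come cheaply: testing $A$ on $e\in c(p)$ forces $(4.18)$, and the inclusion $c_0(p)\subset c(p)$ reduces $(4.17)$ in part $(ii)$ to part $(i)$. The genuine content is therefore $A\in(c_0(p),l_\infty)\Rightarrow(4.17)$ and $A\in(l_\infty(p),l_\infty)\Rightarrow(4.19)$, which I would prove by contradiction with a gliding-hump construction: if the uniform bound fails, select rows $n_1<n_2<\cdots$ along which the relevant row-sum is arbitrarily large, and assemble a single $x$ supported on disjoint blocks of columns, with signs aligned to $a_{n_jk}$. For $l_\infty(p)$ the negation of $(4.19)$ fixes one bad $L$, so the blocks carry magnitude $L^{1/p_k}$ and $x$ stays in $l_\infty(p)$; for $c_0(p)$ the negation of $(4.17)$ must be defeated for every $L$ at once, so successive blocks carry magnitudes $L_j^{-1/p_k}$ with $L_j\to\infty$, which keeps $|x_k|^{p_k}\to0$ and so places $x$ in $c_0(p)$. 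In either case $|(Ax)_{n_j}|\to\infty$ contradicts $Ax\in l_\infty$.

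I expect this gliding-hump step---managing the disjoint blocks, the sign choices, and especially the quantifier difference (``for some $L$'' in $(4.17)$ against ``for all $L$'' in $(4.19)$)---to be the main obstacle, together with the preliminary computation of $c_0(p)^\beta$ and $l_\infty(p)^\beta$. A cleaner alternative is available when $\inf_k p_k>0$: the spaces are then complete paranormed, hence barrelled, and each $x\mapsto(Ax)_n$ is continuous, so Banach--Steinhaus yields equicontinuity of $\{(Ax)_n\}_n$; reading the uniform bound off a neighbourhood of $0$ then replaces the explicit hump and delivers $(4.17)$ and $(4.19)$ directly.
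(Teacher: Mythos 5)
The paper offers no proof of this lemma to compare against: it is quoted verbatim from Grosse-Erdmann \cite{GRO} (the results go back to Lascarides and Maddox) and is used later purely as a black box for computing $\gamma$-duals and characterizing matrix maps. Measured against the classical proof, your sketch is essentially that proof and is sound: the decomposition into row-wise membership in the $\beta$-dual plus uniform boundedness of $((Ax)_n)$, the direct sufficiency estimates, the reduction of part $(ii)$ to part $(i)$ via $e$ and $c_0(p)\subset c(p)$, and necessity by a gliding hump are all the standard route, and the place you flag as the main obstacle is indeed where the work lies. Two caveats are worth recording. First, in the $c_0(p)$ hump an individual row can satisfy $\sum_k |a_{nk}| L_j^{-1/p_k} = \infty$ for the particular $L_j$ in play, since a row lies in $c_0(p)^\beta = \bigcup_L \{a : \sum_k |a_k| L^{-1/p_k} < \infty\}$ only for some row-dependent $L(n)$; so each hump should be a \emph{finite} column block with large partial sum, the tail of row $n_j$ should be controlled at its own level $L(n_j)$ by taking $L_{j+1} \ge L(n_j)$, and the heads are decoupled using the uniform column bounds $\sup_n |a_{nk}| < \infty$ obtained by testing $A$ on multiples of unit vectors. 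Second, your Banach--Steinhaus shortcut genuinely requires $\inf_k p_k > 0$ (otherwise $c_0(p)$ and $l_\infty(p)$ fail to be complete, as the paper itself notes), so it cannot replace the hump in the stated generality $0 < p_k \le H$; you acknowledge this, but it matters because the lemma is applied in the paper without that restriction. Finally, condition $(4.18)$ must be read as including convergence of $\sum_k a_{nk}$ for each $n$, which is exactly what licenses your split $x = \ell e + (x - \ell e)$ in part $(ii)$.
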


\begin{lem}\cite{GRO}{\label{lem40}}
For $0 < p_k \leq H < \infty$, we have\\
 $(i)$ $A \in (c_0(p), c)$ if and only if $(4.10)$, $(4.17)$ and $(4.20)$ hold.\\
$(ii)$ $A\in (c(p), c)$ if and only if $(4.10)$, $(4.17)$, $(4.20)$, $(4.21)$ hold.\\
$(iii)$ $A\in (l_{\infty}(p), c)$ if and only if $(4.22)$, $(4.23)$ hold.
\end{lem}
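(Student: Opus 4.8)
The plan is to reproduce Grosse-Erdmann's classical two-directional argument for matrix maps into the convergence domain $c$, exploiting the inclusion $c\subset l_\infty$ to recycle the boundedness criteria already recorded in Lemma \ref{lem3}. Throughout I would write $\alpha_k=\lim_n a_{nk}$ whenever these column limits exist, and introduce the difference matrix $B=(b_{nk})$ with $b_{nk}=a_{nk}-\alpha_k$. The guiding principle is that membership in $(X(p),c)$ splits into a \emph{boundedness} part (giving a well-defined, uniformly controlled $A$-transform, i.e.\ membership in $(X(p),l_\infty)$) and a \emph{convergence} part (forcing $Bx\to 0$, i.e.\ $B\in(X(p),c_0)$).

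For \emph{necessity} I would argue in three steps. First, since $c\subset l_\infty$, any $A\in(X(p),c)$ lies in $(X(p),l_\infty)$, so Lemma \ref{lem3} yields the boundedness conditions at once: $(4.17)$ for $X=c_0$ and $X=c$ (the accompanying $(4.18)$ in the $c$ case being absorbed into $(4.21)$ below), and $(4.22)$ [= $(4.19)$] for $X=l_\infty$. Second, each coordinate vector $e_k$ belongs to $c_0(p)\subset c(p)\subset l_\infty(p)$, so $Ae_k=(a_{nk})_n\in c$ forces every column to converge, which is exactly $(4.10)$; in part $(ii)$ one evaluates additionally on $e=(1,1,\dots)\in c(p)$ to obtain $\lim_n\sum_k a_{nk}=\alpha$, i.e.\ $(4.21)$. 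Third, since $\lim_n(Ax)_n$ exists and must agree with $\sum_k\alpha_k x_k$ (both functionals being continuous on the $BK$ space and agreeing on the span of the $e_k$), we get $(Bx)_n\to 0$. For part $(i)$ this gives $B\in(c_0(p),c_0)\subset(c_0(p),l_\infty)$, so Lemma \ref{lem3}$(i)$ applied to $B$ produces its boundedness condition, which is precisely $(4.20)$; part $(ii)$ then inherits $(4.10)$, $(4.17)$, $(4.20)$ from part $(i)$ by restricting $A$ to $c_0(p)$. For part $(iii)$, however, $B\in(l_\infty(p),c_0)$ is strictly stronger than boundedness and delivers the full condition $(4.23)$, and this is where the genuine difficulty lies.

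For \emph{sufficiency} the boundedness condition guarantees $A_n\in X(p)^\beta$, so $(Ax)_n=\sum_k a_{nk}x_k$ converges for each $x$ in the domain, and $\sum_k\alpha_k x_k$ converges as well; it then suffices to show $(Ax)_n-\sum_k\alpha_k x_k=\sum_k b_{nk}x_k\to 0$. For $X=c_0(p)$ I would split the series at a large index $N$: the finite head $\sum_{k<N}b_{nk}x_k\to 0$ as $n\to\infty$ by $(4.10)$, while the tail is controlled by $(4.20)$, because, since $p_k\le H$, the decay $|x_k|^{p_k}\to 0$ forces $|x_k|\le\eta L^{-1/p_k}$ eventually for any prescribed $\eta$, whence $\sum_{k\ge N}|b_{nk}||x_k|\le\eta\sup_n\sum_k|b_{nk}|L^{-1/p_k}$ is uniformly small. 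For $X=c(p)$ one writes $x=\ell' e+(x-\ell' e)$ with $x-\ell' e\in c_0(p)$, reducing to the previous case and using $(Ae)_n\to\alpha$ from $(4.21)$. For $X=l_\infty(p)$ the estimate is immediate: if $\sup_k|x_k|^{p_k}\le L$ then $|x_k|\le L^{1/p_k}$, so $|(Bx)_n|\le\sum_k|b_{nk}|L^{1/p_k}\to 0$ by $(4.23)$, giving $(Ax)_n\to\sum_k\alpha_k x_k\in\mathbb{C}$.

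\emph{The main obstacle} is the necessity of the strong uniform condition $(4.23)$ in part $(iii)$, where Lemma \ref{lem3} gives only boundedness and no listed lemma supplies the $(l_\infty(p),c_0)$ criterion. The hard part will be a gliding-hump construction: assuming $(4.23)$ fails for some $L$, one selects increasing blocks of columns on which $\sum_k|b_{nk}|L^{1/p_k}$ stays bounded away from $0$, and then builds $x\in l_\infty(p)$ by assigning on each block values $x_k$ of modulus $\le L^{1/p_k}$ with phases chosen to reinforce the partial sums, yielding a paranorm-bounded sequence whose $A$-transform oscillates and hence fails to converge, contradicting $A\in(l_\infty(p),c)$. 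Keeping the hump inside the $l_\infty(p)$ constraint $|x_k|\le L^{1/p_k}$ while simultaneously defeating convergence of $(Ax)_n$ is the technical heart of the proof; the $c_0(p)$ and $c(p)$ cases, by contrast, are comparatively routine once the boundedness and column-limit conditions are in hand.
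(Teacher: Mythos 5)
The first thing to note is that the paper does not prove this lemma at all: Lemma \ref{lem40} is quoted from Grosse-Erdmann \cite{GRO}, and the paper's ``proof'' is the citation itself. So your reconstruction cannot be matched against an argument in the paper; it has to be judged against the classical proof it is implicitly rebuilding, and in structure it agrees with that proof. Your decomposition is exactly the standard one: a boundedness part recycled from Lemma \ref{lem3} via $c\subset l_\infty$, a column-limit part obtained by testing on the coordinate vectors $e_k$ (and on $e$ for part $(ii)$, which also absorbs $(4.18)$ into $(4.21)$), and a Schur-type convergence part. You also correctly locate the genuine difficulty: the necessity of $(4.23)$ in part $(iii)$, where no listed lemma helps and a gliding-hump construction is unavoidable; your sketch (blocks of columns carrying most of the row mass, phases $x_k$ with $|x_k|\le L^{1/p_k}$ chosen to reinforce, alternation between blocks to force oscillation of $(Ax)_n$) is precisely how the classical $(l_\infty,c)$ theorem is proved, and the auxiliary estimates you need there do follow from $(4.22)$ together with column convergence.

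Two points need repair, though neither is fatal. First, $c_0(p)$, $c(p)$ and $l_\infty(p)$ are not $BK$ spaces---for nonconstant $p$ they are not even normable---so the functional-analytic step in part $(i)$ must be run in the paranormed setting: $c_0(p)$ is, for any bounded $p$, a complete linear metric space with continuous coordinates in which $(e_k)$ is a Schauder basis, and the closed graph and Banach--Steinhaus theorems valid for such $F$-spaces give the continuity you invoke. This repair is available only for $c_0(p)$: when $\inf_k p_k=0$ the sup-paranorm does not even make $c(p)$ or $l_\infty(p)$ topological vector spaces (scalar multiplication is discontinuous at $e$), so it matters that, as you have structured things, part $(ii)$ reduces algebraically to part $(i)$ via $x=\ell'e+(x-\ell'e)$ and part $(iii)$ is purely combinatorial; no topology on those two spaces is ever used, and none should be. Second, your identification $\lim_n(Ax)_n=\sum_k\alpha_kx_k$ is circular as phrased: you cannot call $x\mapsto\sum_k\alpha_kx_k$ a continuous functional before knowing the series converges for every $x\in c_0(p)$. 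The correct order is: $f(x)=\lim_n(Ax)_n$ is continuous by the closed graph theorem; applying $f$ to the basis expansion $x=\sum_kx_ke_k$ then yields simultaneously the convergence of $\sum_k\alpha_kx_k$ and the identity $f(x)=\sum_k\alpha_kx_k$; only after that does $B=(a_{nk}-\alpha_k)_{n,k}$ define a map in $(c_0(p),c_0)\subset(c_0(p),l_\infty)$, to which Lemma \ref{lem3}$(i)$ applies and produces $(4.20)$. With these two adjustments your argument is a complete and correct proof of the lemma.
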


We now define the following sets to obtain the $\alpha$-dual of the spaces $X(r,s,t, p ; B)$:
\begin{align*}
&H_1(p)=\bigcup_{L\in \mathbb{N}}\Big\{a=(a_n)\in w:  \displaystyle\sup_{F}\displaystyle\sum_{n}\bigg|\displaystyle\sum_{k\in F} \displaystyle\sum_{j=k}^{n}(-1)^{j-k}\frac{(-v)^{n-j}}{u^{n-j+1}}\frac{D_{j-k}^{(s)}}{t_j}r_k
a_n L^{\frac{-1}{p_k}}\bigg|<\infty\Big\}
\end{align*}
\begin{align*}
&H_2(p)=\Big\{a=(a_n)\in w:  \displaystyle\sum_{n}\bigg|\displaystyle\sum_{k} \displaystyle\sum_{j=k}^{n}(-1)^{j-k}\frac{(-v)^{n-j}}{u^{n-j+1}}\frac{D_{j-k}^{(s)}}{t_j}r_k
a_n \bigg|<\infty\Big\}\\
&H_3(p)=\bigcap_{L\in \mathbb{N}}\Big\{a=(a_n)\in w:  \displaystyle\sup_{F}\displaystyle\sum_n\bigg|
\displaystyle \sum_{k\in F}\Big(\displaystyle\sum_{j=k}^{n}(-1)^{j-k}\frac{(-v)^{n-j}}{u^{n-j+1}}\frac{D_{j-k}^{(s)}}{t_j}r_k
a_n\Big) L^{\frac{1}{p_k}}\bigg|<\infty\Big\}\\
& H_4(p)=\Big\{a=(a_n)\in w: \displaystyle\sup_{F}\displaystyle\sup_{k}\bigg|\displaystyle\sum_{n\in F}
\displaystyle\sum_{j=k}^{n}(-1)^{j-k}\frac{(-v)^{n-j}}{u^{n-j+1}}\frac{D_{j-k}^{(s)}}{t_j}r_k
a_n\bigg|^{p_k}<\infty \Big\}\\
& H_5(p)=\bigcup_{L\in \mathbb{N}}\Big\{a=(a_n)\in w:
\displaystyle\sup_{F}\displaystyle\sum_{k}\bigg|\displaystyle\sum_{n \in
F}\displaystyle\sum_{j=k}^{n}(-1)^{j-k}\frac{(-v)^{n-j}}{u^{n-j+1}}\frac{D_{j-k}^{(s)}}{t_j}r_k
a_nL^{-1}\bigg|^{p_k'}<\infty\Big\}.
\end{align*}

\begin{thm}
$(a)$If $ p_k > 1$, then $[l(r, s, t, p ; B)]^\alpha= H_5(p)$ and $[l(r, s, t, p ; B)]^\alpha=  H_4(p)$ for $0 < p_k \leq
1$.\\
$(b)$ For $0 < p_k \leq H < \infty$, then \\
$(i)$ $[c_0(r, s, t, p ; B)]^\alpha= H_1(p)$.\\
$(ii)$ $ [c(r, s, t, p ; B)]^\alpha= H_1(p)\cap H_2(p)$.\\
$(iii)$ $[l_\infty(r, s, t, p ; B)]^\alpha= H_3(p)$.
\end{thm}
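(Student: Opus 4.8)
The plan is to convert the defining condition ``$ax\in l_1$ for every $x$'' into a matrix-transformation condition of the type $(X,l_1)$, and then simply read off the answer from the Grosse-Erdmann characterizations recorded in Lemma~\ref{lem1} and Lemma~\ref{lem2}. This exploits the isomorphism $X(r,s,t,p;B)\cong X$ of Theorem 4.2 together with the explicit inverse of $A(r,s,t;B)$.

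First I would recall from the proof of Theorem 4.2 that $x\in X(r,s,t,p;B)$ corresponds bijectively to $y=A(r,s,t;B)x\in X$ through the inversion formula
$$x_n=\sum_{j=0}^{n}\sum_{k=j}^{n}(-1)^{k-j}\frac{(-v)^{n-k}}{u^{n-k+1}}\frac{D_{k-j}^{(s)}}{t_k}\,r_j\,y_j .$$
Isolating the coefficient of $y_k$ (that is, renaming the free outer index to $k$ and the bound inner index to $j$), this reads $x_n=\sum_{k=0}^{n}c_{nk}y_k$, where $c_{nk}=\sum_{j=k}^{n}(-1)^{j-k}\frac{(-v)^{n-j}}{u^{n-j+1}}\frac{D_{j-k}^{(s)}}{t_j}r_k$ is precisely the bracketed kernel appearing inside each set $H_i(p)$. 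Note that $c_{nk}=0$ for $k>n$, so $C=(c_{nk})$ is a triangle.

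Next, for a fixed sequence $a=(a_n)$ I would introduce the matrix $D=(d_{nk})$ given by $d_{nk}=a_n c_{nk}$. Because $C$ is lower triangular, every row of $D$ has only finitely many nonzero entries, so $(Dy)_n=a_n\sum_{k=0}^{n}c_{nk}y_k=a_n x_n$ is well defined for every $y$ with no convergence issue to check. Hence $ax=(a_n x_n)=Dy$ as sequences. Since the map $T\colon x\mapsto y$ is a bijection, $x$ ranges over all of $X(r,s,t,p;B)$ exactly as $y$ ranges over all of $X$; therefore $a\in[X(r,s,t,p;B)]^\alpha$ if and only if $Dy\in l_1$ for every $y\in X$, i.e. if and only if $D\in(X,l_1)$.

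Finally I would apply the relevant lemma to $D$. For $X=l(p)$, Lemma~\ref{lem1}(a)(i) gives condition $(4.5)$ when $p_k>1$ and Lemma~\ref{lem1}(b)(i) gives condition $(4.4)$ when $0<p_k\le1$; substituting $a_{nk}=d_{nk}=a_n c_{nk}$ turns these into exactly $H_5(p)$ and $H_4(p)$. For the remaining cases Lemma~\ref{lem2} yields: $D\in(c_0(p),l_1)$ iff $(4.14)$, which becomes $H_1(p)$; $D\in(c(p),l_1)$ iff $(4.14)$ and $(4.15)$, which become $H_1(p)\cap H_2(p)$ (the extra set $H_2(p)$ being just $(4.15)$, forced by the constant sequence $e\in c(p)$); and $D\in(l_\infty(p),l_1)$ iff $(4.16)$, which becomes $H_3(p)$. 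The only point demanding genuine care, and hence the main obstacle, is the index bookkeeping in forming $D$: one must interchange the order of the double summation in the inversion formula so that the coefficient of $y_k$ is correctly identified as $c_{nk}$, and then verify that after multiplication by $a_n$ this coefficient matches verbatim the kernel written inside each $H_i(p)$. Once that identification is in place, every assertion follows by a direct appeal to the lemmas.
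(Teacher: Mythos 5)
Your proposal is correct and follows essentially the same route as the paper: the paper likewise uses the inversion formula (4.3) to write $a_nx_n=(Cy)_n$ with $c_{nk}=\sum_{j=k}^{n}(-1)^{j-k}\frac{(-v)^{n-j}}{u^{n-j+1}}\frac{D_{j-k}^{(s)}}{t_j}r_k\,a_n$, reduces membership in the $\alpha$-dual to the matrix condition $C\in(X,l_1)$ via the bijection $x\mapsto y$, and then reads off the sets $H_i(p)$ from Lemma \ref{lem1} and Lemma \ref{lem2}. The only cosmetic difference is that you keep the kernel $c_{nk}$ and the factor $a_n$ separate (your matrix $D$ with $d_{nk}=a_nc_{nk}$), whereas the paper absorbs $a_n$ directly into the entries of $C$; also, identifying the coefficient of $y_k$ in (4.3) requires only a renaming of the two summation indices, not an interchange of the order of summation.
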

\begin{proof}$(a)$ Let $p_k>1$ $\forall k$, $a=(a_{n}) \in w$, $x\in l(r, s, t, p; B)$ and $y\in l(p)$. Then for each $n$, we have
$$ a_{n}x_n = \displaystyle\sum_{k=0}^{n}\sum_{j=k}^{n}(-1)^{j-k}\frac{(-v)^{n-j}}{u^{n-j+1}}\frac{D_{j-k}^{(s)}}{t_j}r_k
a_ny_k =(Cy)_{n},$$
where the matrix $C=(c_{nk})_{n, k}$ is defined as
$$
c_{nk} = \left\{
\begin{array}{ll}
 \displaystyle\sum_{j=k}^{n}(-1)^{j-k}\frac{(-v)^{n-j}}{u^{n-j+1}}\frac{D_{j-k}^{(s)}}{t_j}r_k
a_n& \mbox{if}~~   0\leq k \leq n \\
0 & \mbox{if} ~~ k>n,
\end{array}
\right.
$$
and $x_n$ is given in $(4.3)$. Thus for each $x \in l(r, s, t, p;
B)$, $(a_nx_{n})_{n} \in l_{1}$ if and only if $(Cy)_{n} \in
l_{1}$ where $y \in l(p)$. Therefore $a=(a_{n}) \in [l(r, s, t, p;
B)]^{\alpha}$ if and only if $C \in (l(p), l_1)$. By using
Lemma \ref{lem1}$(a)$, we have
$$ [l(r, s, t, p; B)]^{\alpha}= H_5(p).$$
If $0<p_k\leq1$ $\forall k$, then using
Lemma \ref{lem1}$(b)$, we have $[l(r, s, t, p; B)]^{\alpha}= H_4(p)$.\\
$(b)$ In a similar way, using Lemma \ref{lem2}, it can be derived that $[c_0(r, s, t, p ; B)]^\alpha= H_1(p)$, $[c(r, s, t, p ; B)]^\alpha$ $= H_1(p)\cap H_2(p)$ and $[l_\infty(r, s, t, p ; B)]^\alpha= H_3(p)$.
\end{proof}
To find the $\gamma$-dual of the spaces $X(r, s, t, p; B)$ for $X\in\{l_\infty(p), c(p), c_0(p), l(p)\}$, we consider the following sets:
\begin{align*}
&\Gamma_1(p)= \bigcup_{L\in \mathbb{N}}\Big\{ a=(a_k)\in w: \displaystyle\sup_{n}\displaystyle\sum_{k} |e_{nk}| L^{-\frac{1}{p_k}}<\infty\Big\}\\
&\Gamma_2(p)= \Big\{ a=(a_k)\in w: \displaystyle\sup_{n}\Big|\displaystyle\sum_{k} e_{nk}\Big|<\infty\Big\}\\
&\Gamma_3(p)= \bigcap_{L\in \mathbb{N}}\Big\{ a=(a_k)\in w: \displaystyle\sup_{n}\displaystyle\sum_k|e_{nk}| L^{\frac{1}{p_k}}<\infty\Big\}
\end{align*}
\begin{align*}
&\Gamma_4(p)= \bigcup_{L\in \mathbb{N}}\Big\{ a=(a_k)\in w: \displaystyle\sup_{n}\displaystyle\sup_{k}| e_{nk}L^{-1}|^{p_k}<\infty\Big\}\\
&\Gamma_5(p)= \bigcup_{L\in \mathbb{N}}\Big\{ a=(a_k)\in w: \displaystyle\sup_{n}\displaystyle\sum_{k}|e_{nk}L^{-1}|^{p_k'}<\infty\Big\},
\end{align*}
where the matrix $E=(e_{nk})$ is defined as
\begin{equation}{\label{eq20}}
\displaystyle e_{nk}= \left\{
\begin{array}{ll}
   \displaystyle r_k \bigg[\frac{1}{u}\frac{a_k}{s_0t_k}
+
\displaystyle\sum_{j=k}^{k+1}(-1)^{j-k}\frac{D_{j-k}^{(s)}}{t_j}\bigg(\displaystyle\sum_{l=k+1}^{n}\frac{(-v)^{l-j}}{u^{l-j+1}}a_l\bigg)
\\ ~~~~~~~~~~ ~~~~~~~~~+
\displaystyle\sum_{j=k+2}^{n}(-1)^{j-k}\frac{D_{j-k}^{(s)}}{t_j}\bigg(\displaystyle\sum_{l=j}^{n}\frac{(-v)^{l-j}}{u^{l-j+1}}a_l\bigg)\bigg ] & \quad 0\leq k \leq n,\\
    0 & \quad k > n.
\end{array}\right.
\end{equation}
\begin{thm}
$(a)$ If $ p_k > 1$, then $[l(r, s, t, p ; B)]^\gamma=  \Gamma_5(p)$
~and ~$[l(r, s, t, p ; B)]^\gamma=  \Gamma_4(p)$ for $0 < p_k\leq1$.\\
$(b)$ For $0 < p_k \leq H < \infty$ then \\
$(i)$ $[c_0(r, s, t, p ; B)]^\gamma= \Gamma_1(p)$,\\
$(ii)$ $ [c(r, s, t, p ; B)]^\gamma= \Gamma_1(p)\cap \Gamma_2(p)$,\\
$(iii)$ $[l_\infty(r, s, t, p ; B)]^\gamma= \Gamma_3(p)$.
\end{thm}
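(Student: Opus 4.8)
The plan is to imitate the $\alpha$-dual computation of the previous theorem: I would reduce the membership $a \in [X(r,s,t,p;B)]^\gamma$ to a single matrix-class condition $E \in (X, l_\infty)$, where $E = (e_{nk})$ is the triangle displayed in (\ref{eq20}), and then simply read off each case from the Grosse-Erdmann characterizations of mappings into $l_\infty$ recorded in Lemmas \ref{lem1} and \ref{lem3}. Recall that, by definition, $a = (a_k) \in U^\gamma = M(U, bs)$ if and only if $\sup_n \big|\sum_{k=0}^n a_k x_k\big| < \infty$ for every $x \in U$.

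First I would fix $a \in w$ and $x \in X(r,s,t,p;B)$ and set $y = A(r,s,t;B)x \in X$. Substituting the inverse representation $(4.3)$ of $x_k$ in terms of $(y_j)$ into the partial sum $\sum_{k=0}^n a_k x_k$ and interchanging the (finite) order of summation, I expect to obtain the identity
\[
\sum_{k=0}^n a_k x_k = \sum_{k=0}^n e_{nk} y_k = (Ey)_n,
\]
with $E = (e_{nk})$ exactly as in (\ref{eq20}). Since the map $x \mapsto y = A(r,s,t;B)x$ is a linear bijection of $X(r,s,t,p;B)$ onto $X$, as $x$ runs over $X(r,s,t,p;B)$ the transform $y$ runs over all of $X$; hence the displayed identity shows that $(a_k x_k) \in bs$ for every $x \in X(r,s,t,p;B)$ if and only if $Ey \in l_\infty$ for every $y \in X$. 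In other words, $a \in [X(r,s,t,p;B)]^\gamma$ if and only if $E \in (X, l_\infty)$.

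Granting this reduction, every assertion is immediate. For $X = l(p)$ with $p_k > 1$, Lemma \ref{lem1}$(a)(iv)$ gives $E \in (l(p), l_\infty)$ iff condition $(4.24)$ holds for $E$, which is precisely the defining condition of $\Gamma_5(p)$; for $0 < p_k \le 1$, Lemma \ref{lem1}$(b)(iv)$ yields condition $(4.13)$, i.e. $\Gamma_4(p)$. For $X \in \{c_0(p), c(p), l_\infty(p)\}$ I would apply Lemma \ref{lem3}$(i)$--$(iii)$: the conditions $(4.17)$; $(4.17)$ together with $(4.18)$; and $(4.19)$ are exactly the membership conditions for $\Gamma_1(p)$, $\Gamma_1(p) \cap \Gamma_2(p)$ and $\Gamma_3(p)$, respectively. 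This settles parts $(a)$ and $(b)$.

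The hard part will be the rearrangement yielding the displayed identity, i.e. checking that collecting the coefficient of each $y_k$ reproduces (\ref{eq20}) verbatim. After inserting $(4.3)$ one obtains a triple sum; swapping the order of summation to isolate $y_k$ and relabelling leaves the coefficient $r_k \sum_{j=k}^n (-1)^{j-k} \frac{D_{j-k}^{(s)}}{t_j} \sum_{l=j}^n \frac{(-v)^{l-j}}{u^{l-j+1}} a_l$, the indices now satisfying $k \le j \le l \le n$. The somewhat unwieldy three-term shape of (\ref{eq20}) then arises only because the diagonal contribution $l = k$ of the $j = k$ summand must be split off as $\frac{1}{u}\frac{a_k}{s_0 t_k}$ (using $D_0^{(s)} = 1/s_0$), after which the remaining inner sums start at $l = k+1$ for $j \in \{k, k+1\}$ and at $l = j$ for $j \ge k+2$. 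Since every sum is finite for fixed $n$, the interchange needs no analytic justification; the only genuine work is this index bookkeeping, and once it is done the lemmas finish the proof.
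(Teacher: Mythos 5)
Your proposal is correct and follows essentially the same route as the paper: substitute the inverse formula (4.3) into the partial sums $\sum_{k=0}^{n}a_kx_k$, rearrange (with exactly the splitting of the $l=k$ diagonal term you describe, using $D_0^{(s)}=1/s_0$) to obtain $(Ey)_n$ with $E$ as in (\ref{eq20}), conclude that $a\in[X(r,s,t,p;B)]^\gamma$ if and only if $E\in(X,l_\infty)$, and then read off each case from Lemma \ref{lem1}$(a)(iv)$, Lemma \ref{lem1}$(b)(iv)$ and Lemma \ref{lem3}. Your citations of conditions $(4.24)$, $(4.13)$, $(4.17)$--$(4.19)$ match the defining conditions of $\Gamma_5(p)$, $\Gamma_4(p)$, $\Gamma_1(p)$, $\Gamma_2(p)$, $\Gamma_3(p)$ exactly as in the paper's argument.
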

\begin{proof}
$(a)$ Let $p_k > 1$ for all $k$, $a
=(a_n)\in w$, $x \in l(r,s,t, p ; B)$ and $y \in l(p)$. Then using (4.3), we have
\begin{align*}
\displaystyle\sum_{k=0}^{n}a_kx_k &
=\displaystyle \sum_{k=0}^{n}\sum_{l=0}^{k}\sum_{j=l}^{k}(-1)^{j-l}\frac{(-v)^{k-j}}{u^{k-j+1}}\frac{D_{j-l}^{(s)}}{t_j}r_l
y_la_k\\
&=\displaystyle\sum_{k=0}^{n-1}\displaystyle\sum_{l=0}^{k}\sum_{j=l}^{k}(-1)^{j-l}\frac{(-v)^{k-j}}{u^{k-j+1}}\frac{D_{j-l}^{(s)}}{t_j}a_kr_l
y_l +
\displaystyle\sum_{l=0}^{n}\sum_{j=l}^{n}(-1)^{j-l}\frac{(-v)^{n-j}}{u^{n-j+1}}\frac{D_{j-l}^{(s)}}{t_j}a_nr_l
y_l\\
& =\bigg[\frac{1}{u}\frac{D_{0}^{(s)}}{t_0}a_0 +
\displaystyle\sum_{j=0}^{1}(-1)^{j}\frac{D_{j}^{(s)}}{t_j}\bigg(\displaystyle\sum_{l=1}^{n}\frac{(-v)^{l-j}}{u^{l-j+1}}a_l\bigg)
+
\sum_{j=2}^{n}(-1)^{j}\frac{D_{j}^{(s)}}{t_j}\bigg(\displaystyle\sum_{l=j}^{n}\frac{(-v)^{l-j}}{u^{l-j+1}}a_l\bigg)\bigg]r_0y_0\\
& ~~+\bigg[\frac{1}{u}\frac{D_{0}^{(s)}}{t_1}a_1 +
\displaystyle\sum_{j=1}^{2}(-1)^{j-1}\frac{D_{j-1}^{(s)}}{t_j}\bigg(\displaystyle\sum_{l=2}^{n}\frac{(-v)^{l-j}}{u^{l-j+1}}a_l\bigg)
+
\sum_{j=3}^{n}(-1)^{j-1}\frac{D_{j-1}^{(s)}}{t_j}\bigg(\displaystyle\sum_{l=j}^{n}\frac{(-v)^{l-j}}{u^{l-j+1}}a_l\bigg)\bigg]r_1y_1\\
&~~ + \ldots + \frac{1}{u}\frac{D_{0}^{(s)}}{t_n}a_nr_n y_n\\
&
=\displaystyle\sum_{k=0}^{n}\bigg[\frac{1}{u}\frac{a_k}{s_0t_k}
+
\displaystyle\sum_{j=k}^{k+1}(-1)^{j-k}\frac{D_{j-k}^{(s)}}{t_j}\bigg(\displaystyle\sum_{l=k+1}^{n}\frac{(-v)^{l-j}}{u^{l-j+1}}a_l\bigg)
+
\sum_{j=k+2}^{n}(-1)^{j-k}\frac{D_{j-k}^{(s)}}{t_j}\bigg(\displaystyle\sum_{l=j}^{n}\frac{(-v)^{l-j}}{u^{l-j+1}}a_l\bigg)\bigg]r_ky_k\\
& =(Ey)_{n}
\end{align*}
where $E$ is the matrix defined in (\ref{eq20}).
\\Thus $a \in \big[l(r,s,t, p ; B)\big]^{\gamma}$ if and only if $ax=(a_kx_k)\in bs$ for
$x\in l(r,s,t, p ;B)$ if and only if
$\Big(\displaystyle\sum_{k=0}^{n}a_k x_k \Big)_{n}\in
l_{\infty}$, i.e., $(Ey)_{n} \in l_{\infty}$, for $y\in l(p)$. Hence using Lemma \ref{lem1}$(a)$, we have
$$ \big[l(r,s,t, p ; B)\big]^{\gamma} = \Gamma_{5}(p).$$
If $0<p_k\leq1$ $\forall k$, then using
Lemma \ref{lem1}$(b)$, we have
$[l(r, s, t, p; \Delta)]^{\gamma}= \Gamma_4(p)$.\\
$(b)$ In a similar way, using Lemma \ref{lem3}, we can obtain $[c_0(r, s, t, p ; B)]^\gamma= \Gamma_1(p)$, $ [c(r, s, t, p ; B)]^\gamma= \Gamma_1(p)\cap \Gamma_2(p)$ and
$[l_\infty(r, s, t, p ; B)]^\gamma= \Gamma_3(p)$.
\end{proof}
To obtain $\beta$-duals of $X(r, s, t, p;B)$, we define the following sets:
\begin{align*}
&B_{1}= \Big\{ a=(a_n)\in w: \displaystyle\sum_{l=k+1}^{\infty}\frac{(-v)^{l-j}}{u^{l-j+1}}a_l~~{\rm exists~ for~ all}~ k \Big \}\\
&B_{2}= \Big\{ a=(a_n)\in w: \displaystyle\sum_{j=k+2}^{\infty}(-1)^{j-k}\frac{D_{j-k}^{(s)}}{t_j}\bigg(\displaystyle\sum_{l=j}^{\infty}\frac{(-v)^{l-j}}{u^{l-j+1}}a_l\bigg) ~~{\rm exists~ for~ all}~ k \Big \}\\
&B_{3}= \Big\{ a=(a_n)\in w: \Big(\frac{r_k a_k}{t_k}\Big) \in l_{\infty}(p) \Big \}\\
&B_{4}= \bigcup_{L\in \mathbb{N}}\Big\{ a=(a_n)\in w: \displaystyle\sup_{n}\sum_{k}\Big|e_{nk}L^{-1}\Big|^{p_{k}{'}}<\infty\Big \} \\
&B_{5}= \Big\{a=(a_n)\in w: \displaystyle\sup_{n, k}|e_{nk}|^{p_k}< \infty \Big \}\\
&B_{6}= \Big\{ a=(a_n)\in w: \exists (\alpha_k)~\displaystyle\lim_{n \rightarrow \infty}e_{nk}= \alpha_k ~ \forall ~k \Big \},\\
&B_{7}= \bigcap_{L\in \mathbb{N}}\Big\{ a=(a_n)\in w: \exists (\alpha_k)~ \displaystyle\sup_{n, k}\Big(|e_{nk} - \alpha_k | L \Big)^{p_k}< \infty \Big \} \\
&B_{8}= \bigcap_{L\in \mathbb{N}}\Big\{ a=(a_n)\in w: \exists (\alpha_k)~\displaystyle\sup_{n
}\sum_{k}\Big(|e_{nk} - \alpha_k | L \Big)^{p_k'}< \infty \Big \}\\
&B_{9}= \bigcup_{L\in \mathbb{N}}\Big\{ a=(a_n)\in w: \exists (\alpha_k)~ \displaystyle\sup_{n
}\sum_{k}\Big|e_{nk} - \alpha_k \Big| L ^\frac{-1}{p_k}< \infty
\Big \}\\
&B_{10}= \bigcup_{L\in \mathbb{N}}\Big\{ a=(a_n)\in w:\displaystyle\sup_{n
}\sum_{k}|e_{nk}| L^\frac{-1}{p_k}< \infty \Big \}\\
&B_{11}= \Big\{ a=(a_n)\in w: \exists \alpha~\displaystyle\lim_{n
}\Big|\sum_{k}e_{nk}-\alpha\Big| =0 \Big \}\\
&B_{12}= \bigcap_{L\in \mathbb{N}}\Big\{ a=(a_n)\in w: \displaystyle\sup_{n
}\sum_{k}|e_{nk}|L^\frac{1}{p_k} <\infty\Big \}\\
&B_{13}= \bigcap_{L\in \mathbb{N}}\Big\{ a=(a_n)\in w: \exists (\alpha_k)~\displaystyle\lim_{n
}\sum_{k}|e_{nk}-\alpha_k|L^\frac{1}{p_k}=0\Big \}.
\end{align*}
\begin{thm}
$(a)$ If $p_k>1$ for all $k$, then $[l(r,s, t, p; B)]^{\beta} = B_1 \bigcap B_2 \bigcap B_3
\bigcap B_4 \bigcap B_6\bigcap B_8$ and if $0< p_k \leq 1 $ for all $k$, then $[l(r,s, t, p; B)]^{\beta} = B_1 \bigcap B_2
\bigcap B_3 \bigcap B_5\bigcap B_6 \bigcap B_7$.
\\
$(b)$ Let $p_k>0$ for all $k$. Then\\
$(i)$ $[c_0(r,s, t, p; B)]^{\beta} = B_1 \bigcap B_2 \bigcap B_3
\bigcap B_6 \bigcap B_9\bigcap B_{10}$,\\
$(ii)$ $[c(r,s, t, p; B)]^{\beta}= B_1 \bigcap B_2 \bigcap B_3\bigcap B_6\bigcap B_{9}\bigcap B_{10} \bigcap B_{11}$,\\
$(iii)$ $[l_{\infty}(r,s, t, p; B)]^{\beta}= B_1 \bigcap B_2 \bigcap B_3\bigcap B_{12}\bigcap B_{13}$.
\end{thm}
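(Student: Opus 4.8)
The plan is to reduce the computation of each $\beta$-dual to a single matrix-class membership of the form $E\in (X,c)$ and then read off the listed conditions from the Grosse-Erdmann lemmas, exactly as was done for the $\gamma$-dual but with $c$ in place of $l_\infty$. The starting point is the same Abel-type rearrangement: for $a=(a_n)\in w$ and $x\in X(r,s,t,p;B)$, set $y=A(r,s,t;B)x\in X$, substitute the inversion formula (4.3) for $x$ in terms of $y$, and interchange the \emph{finite} order of summation to obtain
\[
\sum_{k=0}^{n}a_kx_k=(Ey)_n=\sum_{k=0}^{n}e_{nk}y_k,
\]
where $E=(e_{nk})$ is precisely the triangle of (\ref{eq20}). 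Since $a\in[X(r,s,t,p;B)]^{\beta}$ means $ax=(a_kx_k)\in cs$ for every such $x$, and the partial sums of $ax$ are exactly $\big((Ey)_n\big)_n$, membership in the $\beta$-dual is equivalent to $\big((Ey)_n\big)_n\in c$ for every $y\in X$, i.e. to $E\in (X,c)$.

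Before invoking the matrix lemmas I would isolate the conditions needed merely for the right-hand side above to admit a limit as $n\to\infty$. Passing $n\to\infty$ inside (\ref{eq20}) requires the inner geometric-type tails $\sum_{l\ge k+1}\frac{(-v)^{l-j}}{u^{l-j+1}}a_l$ to converge, which is condition $B_1$, and the nested series $\sum_{j\ge k+2}(-1)^{j-k}\frac{D_{j-k}^{(s)}}{t_j}\big(\sum_{l\ge j}\frac{(-v)^{l-j}}{u^{l-j+1}}a_l\big)$ to converge, which is condition $B_2$; together these say exactly that the column limits $\alpha_k:=\lim_{n}e_{nk}$ exist (equivalently, they make the column-limit requirement (4.10) concretely verifiable). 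The diagonal entry of $E$ is $e_{kk}=\frac{r_ka_k}{us_0t_k}$, and the requirement that this diagonal lie in $l_\infty(p)$, namely $(r_ka_k/t_k)\in l_\infty(p)$, is recorded by $B_3$. Thus $B_1\cap B_2\cap B_3$ is the part of every $\beta$-dual that secures well-definedness of the limit matrix and legitimacy of the limit transition.

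With the column limits in hand, the remaining conditions follow by applying the appropriate characterization of $(X,c)$ to $E$. For $X=l(p)$ I would use Lemma \ref{lem1}: part $(a)(iii)$ (case $p_k>1$) gives conditions (4.10), (4.12), (4.24), which on $E$ read as $B_6,\,B_8,\,B_4$, while part $(b)(iii)$ (case $0<p_k\le 1$) gives (4.9), (4.10), (4.11), i.e. $B_5,\,B_6,\,B_7$. For $X\in\{c_0(p),c(p),l_\infty(p)\}$ I would use Lemma \ref{lem40}: $(c_0(p),c)$ gives (4.10), (4.17), (4.20), i.e. $B_6,\,B_{10},\,B_9$; $(c(p),c)$ adds (4.21), i.e. $B_{11}$; and $(l_\infty(p),c)$ gives (4.22), (4.23), i.e. $B_{12},\,B_{13}$. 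Intersecting each with $B_1\cap B_2\cap B_3$ yields exactly the six intersections claimed in the theorem.

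The main obstacle I anticipate is the bookkeeping in the passage to the limit inside (\ref{eq20}): one must justify that each termwise limit $\alpha_k=\lim_n e_{nk}$ exists, separate the convergence requirements on the inner (geometric-type and nested) series from the genuine matrix-class conditions so that $B_1,B_2,B_3$ are cleanly isolated, and carry out the translation of each Grosse-Erdmann condition on $(a_{nk})$ into the corresponding condition on $(e_{nk})$ without sign or index errors. Once this is in place, the equivalence $a\in[X(r,s,t,p;B)]^{\beta}\iff E\in(X,c)$ together with Lemmas \ref{lem1} and \ref{lem40} delivers the result.
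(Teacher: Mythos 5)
Your proposal is correct and follows essentially the same route as the paper: invoke the identity $\sum_{k=0}^{n}a_kx_k=(Ey)_n$ from the $\gamma$-dual computation (Theorem 4.5), observe that $a\in[X(r,s,t,p;B)]^{\beta}$ iff $E\in(X,c)$, and then read off the conditions from Lemma \ref{lem1} (for $X=l(p)$) and Lemma \ref{lem40} (for $X\in\{c_0(p),c(p),l_\infty(p)\}$), with exactly the same matching of the Grosse-Erdmann conditions to the sets $B_4$--$B_{13}$. In fact your explicit isolation of $B_1\cap B_2\cap B_3$ as the conditions guaranteeing existence of the column limits of $E$ supplies a detail the paper's proof leaves implicit.
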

\begin{proof}
$(a)$ Let $p_k>1$ for all $k$. By Theorem 4.5, we have
$$\sum\limits_{k=0}^{n}a_k x_k = (Ey)_n,$$ where the matrix $E$ is
defined in (\ref{eq20}).
Thus $a \in \big[l(r,s,t,p; B)\big]^{\beta}$
if and only if $ax=(a_kx_k)\in cs$, where $x \in l(r,s,t,p; B)$
if and only if $(Ey)_{n} \in c$, where $y\in l(p)$, i.e., $E \in (l(p),c)$. Hence by Lemma \ref{lem1}$(a)$, we have
\begin{align*}
& \displaystyle\sup_{n\in \mathbb{N}_0}\sum_{k=0}^{\infty}\Big|e_{nk}L^{-1}\Big|^{p_{k}{'}}<\infty ~\mbox{for some} ~L\in \mathbb{N},\\
& \exists (\alpha_k)~\displaystyle\lim_{n \rightarrow \infty}e_{nk}= \alpha_k  ~\mbox{for all} ~k, \\
& \exists (\alpha_k)~\displaystyle\sup_{n\in \mathbb{N}_0}\sum_{k=0}^{\infty}\Big(|e_{nk} - \alpha_k| L \Big)^{p_k'}< \infty ~\mbox{for all} ~L\in \mathbb{N}.
\end{align*}
Therefore $[l(r,s, t, p; B)]^{\beta} = B_1
\bigcap B_2 \bigcap B_3 \bigcap B_4 \bigcap B_6\bigcap B_8.$\\
If $0<p_k\leq1$ $\forall k$, then using Lemma \ref{lem1}$(b)$, we have
\begin{align*}
& \displaystyle\sup_{n, k\in \mathbb{N}_0}|e_{nk}|^{p_k}< \infty,\\
& \exists (\alpha_k)~\displaystyle\lim_{n \rightarrow \infty}e_{nk}= \alpha_k  ~\mbox{for all} ~k, \\
& \exists (\alpha_k)~ \displaystyle\sup_{n, k\in \mathbb{N}_0
}\Big(|e_{nk} - \alpha_k | L \Big)^{p_k}< \infty ~\mbox{for all} ~L\in \mathbb{N}.
\end{align*}
Thus $[l(r,s, t, p; B)]^{\beta} = B_1
\bigcap B_2 \bigcap B_3 \bigcap B_5 \bigcap B_6\bigcap B_7$.\\
$(b)$ In a similar way, using Lemma \ref{lem40}, we can obtain the $\beta$-duals of $c_0(r,s, t, p; B)$, $c(r,s, t, p; B)$ and $l_{\infty}(r,s, t, p; B)$.
\end{proof}

\subsection{Matrix mappings}
\begin{thm}Let $\tilde{E}=(\tilde{e}_{nk})$ be the matrix which is same as the matrix
${E}=({e}_{nk})$ defined in (\ref{eq20}), where $a_{k}$ and $a_{l}$ is replaced by $a_{nk}$ and $a_{nl}$ respectively. \\
$(a)$ Let
$1< p_k\leq H< \infty$ for $k \in \mathbb{N}_0$, then $A \in (l(r,s,
t, p; B), l_{\infty})$ if and only if there exists $L \in \mathbb{N}$ such
that
$$ \displaystyle\sup_{n}\sum_{k}|\tilde{e}_{nk}L^{-1}|^{p_{k}^{'}} < \infty \mbox{~~and~}(a_{nk})_{k}
     \in B_1 \bigcap B_2 \bigcap B_3 \bigcap B_4 \bigcap B_6\bigcap B_8.$$
$(b)$ Let $0< p_k \leq 1 $ for $k \in \mathbb{N}$. Then $ A \in
(l(r,s, t, p; B), l_{\infty})$
  if and only if there exists $L \in \mathbb{N}$ such
that $$ \displaystyle\sup_{n}\sup_{k}|\tilde{e}_{nk}L^{-1}|^{p_{k}} < \infty \mbox{~~and~}(a_{nk})_{k}
   \in B_1 \bigcap B_2 \bigcap B_3 \bigcap
B_5\bigcap B_6 \bigcap B_7.$$
\end{thm}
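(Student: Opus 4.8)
The plan is to mirror the strategy used for the duals in Theorems 4.5 and 4.6, reducing the matrix problem to a Grosse--Erdmann characterization on the Maddox space $l(p)$ through the isomorphism of Theorem 4.2. Throughout I write $y=A(r,s,t;B)x$, so that $T\colon l(r,s,t,p;B)\to l(p)$, $x\mapsto y$, is a paranorm-preserving bijection whose inverse is the map (4.3).

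First I would isolate the well-definedness requirement. For $A=(a_{nk})$ to act on every $x\in l(r,s,t,p;B)$, each coordinate $(Ax)_n=\sum_{k}a_{nk}x_k$ must converge, so every row $(a_{nk})_k$ must lie in $[l(r,s,t,p;B)]^{\beta}$. When $1<p_k\leq H$, Theorem 4.6(a) identifies this dual, contributing exactly the membership $(a_{nk})_k\in B_1\cap B_2\cap B_3\cap B_4\cap B_6\cap B_8$; when $0<p_k\leq 1$ the same theorem gives $(a_{nk})_k\in B_1\cap B_2\cap B_3\cap B_5\cap B_6\cap B_7$. These are the dual-type clauses appearing in parts $(a)$ and $(b)$.

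Next I would transport the action of $A$ to $l(p)$. Fixing a row index $n$ and substituting the inverse formula (4.3) into the now convergent series $(Ax)_n=\sum_{k}a_{nk}x_k$, I would interchange the order of summation to obtain $$(Ax)_n=\sum_{k}\tilde{e}_{nk}\,y_k=(\tilde{E}y)_n,$$ where $\tilde{E}=(\tilde{e}_{nk})$ is the matrix described in the statement. This is precisely the computation carried out in the proof of Theorem 4.5, applied to the $n$-th row $(a_{nk})_k$ in place of $a$ and followed by the passage to the limit in the partial-sum cut-off. Since $T$ is a bijection, $x$ ranges over $l(r,s,t,p;B)$ if and only if $y$ ranges over $l(p)$; hence $Ax\in l_\infty$ for all admissible $x$ is equivalent to $\tilde{E}y\in l_\infty$ for all $y\in l(p)$, that is, to $\tilde{E}\in(l(p),l_\infty)$.

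Finally I would invoke Lemma \ref{lem1}. For $1<p_k\leq H$, part $(a)(iv)$ gives $\tilde{E}\in(l(p),l_\infty)$ if and only if condition (4.24) holds for $\tilde{E}$, namely $\sup_{n}\sum_{k}|\tilde{e}_{nk}L^{-1}|^{p_k'}<\infty$ for some $L$; combined with the $\beta$-dual clause this proves part $(a)$. For $0<p_k\leq 1$, part $(b)(iv)$ yields condition (4.13), i.e. $\sup_{n}\sup_{k}|\tilde{e}_{nk}|^{p_k}<\infty$, which for $0<p_k\leq 1$ and $L\geq 1$ is equivalent to the stated $\sup_{n}\sup_{k}|\tilde{e}_{nk}L^{-1}|^{p_k}<\infty$, and this proves part $(b)$. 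The hard part will be the middle step: rigorously justifying the identity $(Ax)_n=(\tilde{E}y)_n$, since it requires both the interchange of the double series and the convergence of the infinite inner and outer sums defining $\tilde{e}_{nk}$. This is exactly where the conditions $B_1$ and $B_2$ in the $\beta$-dual membership are indispensable, so the well-definedness step and the transport step are logically intertwined rather than independent.
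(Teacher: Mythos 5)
Your proposal is correct and follows essentially the same route as the paper: both arguments reduce the problem, via the identity $\sum_{k}a_{nk}x_k=\sum_{k}\tilde{e}_{nk}y_k$ (obtained from the Theorem 4.5 computation with passage to the limit in the partial sums, the $\beta$-dual membership from Theorem 4.6 supplying well-definedness of the series), to the statement $\tilde{E}\in(l(p),l_\infty)$, which is then settled by Lemma \ref{lem1}. The only difference is cosmetic: for the sufficiency direction the paper writes out a direct Young-type estimate $|ab|\leq T\big(|aT^{-1}|^{p'}+|b|^{p}\big)$ instead of citing Lemma \ref{lem1} as a two-way equivalence in both directions, as you do.
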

\begin{proof}
$(a)$ Let $p_k>1$ for all $k$. Since
$(a_{nk})_{k}\in \big[l(r,s,t,p;
B)\big]^{\beta}$ for each fixed $n$, $Ax$ exists for all $x\in l(r,s,t,p;
B) $. Now for each $n$, we have
\begin{align*}
&\sum\limits_{k=0}^{m}a_{nk} x_k \\
&=\sum\limits_{k=0}^{m}\displaystyle r_k
\bigg[\frac{1}{u}\frac{a_{nk}}{s_0t_k}
+
\displaystyle\sum_{j=k}^{k+1}(-1)^{j-k}\frac{D_{j-k}^{(s)}}{t_j}\bigg(\displaystyle\sum_{l=k+1}^{n}\frac{(-v)^{l-j}}{u^{l-j+1}}a_{nl}\bigg)
+
\sum_{j=k+2}^{n}(-1)^{j-k}\frac{D_{j-k}^{(s)}}{t_j}\bigg(\displaystyle\sum_{l=j}^{n}\frac{(-v)^{l-j}}{u^{l-j+1}}a_{nl}\bigg)\bigg]y_k\\
   & = \sum\limits_{k=0}^{m}\tilde{e}_{nk}y_k,
\end{align*}
 Taking
$m\rightarrow\infty$, we have
$$\sum\limits_{k=0}^{\infty}a_{nk}
x_k=\sum\limits_{k=0}^{\infty}\tilde{e}_{nk} y_k  \mbox{~~~for all~}
n\in \mathbb{N_{\rm 0}}.$$ We know that for any $T>0$ and any
complex numbers $a, b$
\begin{equation}
|ab|\leq T(|aT^{-1}|^{p'} + |b|^p)
\end{equation} where $p>1$ and $\frac{1}{p}+ \frac{1}{p'}=1$. Now one can easily
find that
$$\displaystyle \sup_{n}\bigg|\sum\limits_{k}a_{nk} x_k\bigg|\leq
\sup_{n}\sum\limits_{k}\Big|\tilde{e}_{nk} \Big|\Big|y_k\Big|
\leq  T\bigg[\sup_{n}\sum\limits_{k}|\tilde{e}_{nk}T^{-1}|^{{p_k}'} +
\sum\limits_{k}|y_{k}|^{{p_k}}\bigg]<\infty.$$
Conversely, assume that $A\in (l(r,s, t, p; B), l_{\infty})$
and $1< p_k\leq H< \infty$ for all $k$. Then $Ax$
exists for each $x\in l(r,s, t, p; B)$, which implies that
$(a_{nk})_{k}\in [l(r,s, t, p;
B)]^\beta$ for each $n$. Thus
$(a_{nk})_{k}\in B_1 \bigcap B_2 \bigcap B_3
\bigcap B_4 \bigcap B_6\bigcap B_8$. Since
$\sum\limits_{k=0}^{\infty}a_{nk}
x_k=\sum\limits_{k=0}^{\infty}\tilde{e}_{nk} y_k $, we have
$\tilde{E}=(\tilde{e}_{nk})\in (l(p), l_\infty)$. Now using Lemma \ref{lem1}$(a)$, we have
$\displaystyle\sup_{n}\sum_{k}\Big|\tilde{e}_{nk}L^{-1}\Big|^{p_{k}^{'}}
<\infty$ for some
$L\in \mathbb{N}$. This completes the proof.\\
$(b)$  We omit the proof of this part as it is similar to the previous part.
\end{proof}

\begin{thm}
$(a)$ Let $1< p_k\leq H< \infty$ for $k \in \mathbb{N}$, then $A \in
(l(r,s, t, p; B), l_{1})$ iff there exists $L\in \mathbb{N}$
such that
\begin{center}
$ \displaystyle\sup_{{F}} \sum_{k}\Big| \sum_{ n \in
F}\tilde{e}_{nk}L^{-1}\Big|^{p^{'}_{k}} < \infty ~~ {\rm for ~ some~ L \in
\mathbb{N}}
 \mbox{~~and~} (a_{nk})_{k}  \in B_1 \bigcap B_2 \bigcap B_3 \bigcap B_4\bigcap B_6\bigcap B_8.$
 \end{center}
$(b)$ Let $0< p_k \leq 1 $ for $k \in \mathbb{N}$. Then $ A \in
(l(r,s, t, p; B), l_{1})$
  iff
\begin{center}
   $ \displaystyle\sup_{ F} \sup_{k}\Big| \sum_{ n \in F}\tilde{e}_{nk}\Big|^{p_{k}} <
  \infty$
and $ ( a_{nk})_{k } \in B_1 \bigcap B_2
\bigcap B_3 \bigcap B_5\bigcap B_6 \bigcap B_7.$
\end{center}
\end{thm}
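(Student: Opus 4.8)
The plan is to follow the proof of Theorem 4.7 almost verbatim, replacing the target space $l_\infty$ by $l_1$ and invoking the $(l(p), l_1)$ parts of Lemma \ref{lem1} in place of the $(l(p), l_\infty)$ parts. The two structural ingredients are the paranorm-preserving bijection $T : l(r,s,t,p;B) \to l(p)$, $x \mapsto y$, supplied by Theorem 4.2, and the summation-by-parts identity established in the proofs of Theorems 4.5 and 4.7.

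First I would observe that $Ax$ is defined for every $x \in l(r,s,t,p;B)$ precisely when each row $(a_{nk})_k$ lies in $[l(r,s,t,p;B)]^\beta$. By the $\beta$-dual computation of Theorem 4.6, this amounts to $(a_{nk})_k \in B_1 \cap B_2 \cap B_3 \cap B_4 \cap B_6 \cap B_8$ when $1 < p_k \le H < \infty$, and to $(a_{nk})_k \in B_1 \cap B_2 \cap B_3 \cap B_5 \cap B_6 \cap B_7$ when $0 < p_k \le 1$; these are exactly the row-membership requirements recorded in parts (a) and (b).

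Granting these requirements, I would rerun the computation from the proof of Theorem 4.7 to obtain, for each fixed $n$,
\[
\sum_{k=0}^{\infty} a_{nk} x_k = \sum_{k=0}^{\infty} \tilde{e}_{nk} y_k, \qquad y = Tx .
\]
Since $T$ carries $l(r,s,t,p;B)$ bijectively onto $l(p)$, the sequence $(Ax)_n = \sum_k a_{nk} x_k$, as $x$ ranges over $l(r,s,t,p;B)$, coincides with $(\tilde{E}y)_n = \sum_k \tilde{e}_{nk} y_k$ as $y$ ranges over $l(p)$. Hence $A \in (l(r,s,t,p;B), l_1)$ if and only if $\tilde{E} \in (l(p), l_1)$.

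Finally I would apply Lemma \ref{lem1} to the matrix $\tilde{E}$. For $1 < p_k \le H < \infty$, part $(a)(i)$ gives $\tilde{E} \in (l(p), l_1)$ if and only if condition $(4.5)$ holds for $\tilde{E}$, that is $\sup_F \sum_k |\sum_{n \in F} \tilde{e}_{nk} L^{-1}|^{p_k'} < \infty$ for some $L$, which is the displayed condition of part (a). For $0 < p_k \le 1$, part $(b)(i)$ gives $\tilde{E} \in (l(p), l_1)$ if and only if condition $(4.4)$ holds for $\tilde{E}$, that is $\sup_F \sup_k |\sum_{n \in F} \tilde{e}_{nk}|^{p_k} < \infty$, which is the displayed condition of part (b). Combining each with the corresponding $\beta$-dual requirement completes both directions. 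The step I expect to demand the most care is justifying the rearrangement of the double sum in the identity above and the convergence of the inner series, uniformly enough to pass to the supremum over the finite set $F$ in the $l_1$ characterization; this is precisely what membership in $B_1 \cap B_2$ guarantees, so the remainder is routine.
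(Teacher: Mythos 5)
Your proposal is correct and is exactly the route the paper intends: the paper's own ``proof'' of this theorem is a one-line remark that it follows in the same way as the $l_\infty$ case (Theorem 4.7), and you have spelled out precisely that argument --- row membership in the $\beta$-dual from Theorem 4.6, the summation-by-parts identity $\sum_k a_{nk}x_k=\sum_k \tilde{e}_{nk}y_k$ transferred through the bijection $T$, and then Lemma \ref{lem1}$(a)(i)$ (condition $(4.5)$) and Lemma \ref{lem1}$(b)(i)$ (condition $(4.4)$) applied to $\tilde{E}$ in place of the $l_\infty$ conditions. No gaps worth flagging.
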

\begin{proof}
We omit the proof as it follows as the same way.
\end{proof}

\section{Measure of noncompactness and compact operators on the space $l_p(r, s,t; B)$}
In this section, we concentrate on $l_p(r, s,t; B)$, $p\geq 1$, which is a $BK$ space and establish some identities or estimates for the Hausdorff measure of noncompactness of certain matrix operators on the space $l_p(r, s,t; B)$. Moreover, we characterize some classes of compact
operators on this space.\\
The Hausdorff measure of noncompactness was first introduced and studied by Goldenstein, Gohberg and Markus in 1957 and
later on stuided by Istr\v{a}\c{t}esku in 1972 \cite{IST}.
 It is quite natural to find necessary and sufficient conditions for a matrix mapping between $BK$ spaces to define a compact operator as
 the matrix transformations between $BK$ spaces are continuous. This can be achieved with the help of Hausdorff measure of noncompactness.
 Recently several authors, namely, Malkowsky and Rako\v{c}evi\'{c} \cite{MAL5}, Dojolovi\'{c} et al. \cite{DJO2}, Dojolovi\'{c} \cite{DJO},  Mursaleen and Noman (\cite{MUR2}, \cite{MUR3}), Ba\c{s}arir and Kara \cite{BASA1}
 etc. have established some identities or estimates for the operator norms and the Hausdorff measure of noncompactness of matrix operators from an
 arbitrary $BK$ space to arbitrary $BK$ space. Let us recall some definitions and well-known results.

 Let $X$, $Y$ be two Banach spaces and $S_X$ denotes the unit sphere in $X$, i.e., $S_X=\{x\in X: \|x\|=1\}$.
 We denote by $\mathcal{B}(X, Y)$, the set of all bounded (continuous) linear operators $L: X\rightarrow Y$, which is a Banach space with the operator norm $\|L\|=\displaystyle\sup_{x\in S_X}\|L(x)\|_Y$ for all $L\in \mathcal{B}(X, Y)$. A linear operator $L: X\rightarrow Y$ is said to be compact if the domain of $L$ is all of $X$ and for every bounded sequence $(x_n)\in X$, the sequence $(L(x_n))$ has a subsequence which is convergent in $Y$ and we denote by $\mathcal{C}(X, Y)$, the class of all compact operators in $\mathcal{B}(X, Y)$. An operator $L\in \mathcal{B}(X, Y)$ is said to be finite rank if ${\rm dim}R(L)<\infty$, where $R(L)$ is the range space of $L$.
If $X$ is a $BK$ space and $a=(a_k)\in w$, then we consider
\begin{equation}{\label{eq0}}
\|a\|_X^*=\displaystyle\sup_{x\in S_X}\Big|\displaystyle\sum_{k=0}^{\infty}a_kx_k\Big|,
\end{equation}
provided the expression on the right side exists and is finite which is the case whenever $a\in X^\beta$ \cite{MUR3}.\\
Let $(X,d )$ be a metric space and $\mathcal{M}_{X}$ be the class of all bounded subsets of $X$.
Let $B(x, r) = \{y \in X : d(x,y) < r \}$ denotes the open ball of radius $r> 0$ with centre at $x$.
The Hausdorff measure of noncompactness of a set $Q \in \mathcal{M}_{X}$, denoted by $\chi(Q)$, is defined as
$$ \displaystyle\chi(Q) = \inf \Big \{ \epsilon > 0: Q \subset \bigcup_{i=0}^{n}B(x_i, r_i), x_i \in X, r_i < \epsilon , n \in \mathbb{N}_0\Big\}.$$
The function $\chi: \mathcal{M}_{X} \rightarrow [0, \infty) $ is called the Hausdorff measure of noncompactness. The basic properties of the Hausdorff measure of noncompactness can be found in (\cite{MAL5}, \cite{DJO2}, \cite{MAL3}, \cite{MAL4}).
For example, if $Q, Q_1$ and $Q_2$ are bounded subsets of a metric space $(X,d)$ then
\begin{align*}
&\chi(Q) =0 \mbox{~if and only if~} Q \mbox{~is totally bounded} \mbox{~~and}\\
&\mbox{if~} Q_1 \subset Q_2 \mbox{~then~} \chi(Q_1) \leq \chi(Q_2).
\end{align*}
Also if $X$ is a normed space, the function $\chi$ has some additional properties due to linear structure, namely,
\begin{align*}
&\chi(Q_1 + Q_2) \leq  \chi(Q_1) + \chi( Q_2),\\
& \chi( \alpha Q) = |\alpha| \chi(Q) ~ \mbox{for all }~ \alpha \in \mathbb{K}.
\end{align*}
Let $\phi$ denotes the set of all finite sequences, i.e., of sequences that terminate in zeros. Throughout we denote $p'$ as the conjugate of $p$ for $1\leq p<\infty$, i.e., $p{'} =\frac{ p }{p-1}$ for $p>1$ and $p'=\infty$ for $p=1$. The following known results are fundamental for our investigation.
\begin{lem}{\rm \cite{MUR3}}{\label{lem4}}
Let $1 \leq p< \infty$ and $A\in (l_p, c)$. Then the followings hold:
\begin{align*}
&(i)~\alpha_k= \displaystyle\lim_{n\rightarrow\infty}{a}_{nk} \mbox{~exists for all~} k\in \mathbb{N}_{0},\\
&(ii)~\alpha=(\alpha_k)\in l_{p'},\\
&(iii)~\displaystyle\sup_{n}\|A_n-\alpha\|_{l_{p'}}<\infty, \\
&(iv)~\displaystyle\lim_{n\rightarrow\infty}A_n(x)=\displaystyle\sum_{k=0}^{\infty}\alpha_k x_k \mbox{~for all~}x=(x_k)\in l_p.
\end{align*}
\end{lem}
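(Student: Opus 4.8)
The plan is to read off the limit entries $\alpha_k$ from the action of $A$ on the unit vectors, to extract a single uniform $l_{p'}$-bound on the rows of $A$ from the inclusion $A\in(l_p,l_\infty)$, and then to deduce (i)--(iii) almost immediately; the substantive part is (iv), which I would settle by a head/tail splitting argument together with H\"older's inequality. First I would prove (i): for each fixed $k$ the unit sequence $e^{(k)}$ (with $1$ in the $k$-th place and $0$ elsewhere) belongs to $l_p$, so $Ae^{(k)}=(a_{nk})_{n}\in c$ because $A\in(l_p,c)$; hence $\alpha_k=\lim_{n}a_{nk}$ exists for every $k$. Next, since $c\subset l_\infty$ we have $A\in(l_p,l_\infty)$, and the classical characterization of this class yields a finite constant $M=\sup_{n}\|A_n\|_{l_{p'}}<\infty$ (for $p=1$ this reads $\sup_{n,k}|a_{nk}|<\infty$, with $p'=\infty$), which is the quantity driving the rest of the argument.

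For (ii) I would fix $m\in\mathbb{N}_0$ and use (i) together with the uniform bound to write
\[
\sum_{k=0}^{m}|\alpha_k|^{p'}=\lim_{n\to\infty}\sum_{k=0}^{m}|a_{nk}|^{p'}\leq \sup_{n}\|A_n\|_{l_{p'}}^{p'}=M^{p'},
\]
and then let $m\to\infty$ to obtain $\|\alpha\|_{l_{p'}}\leq M$, so that $\alpha\in l_{p'}$ (the case $p=1$ is identical with the supremum norm). Part (iii) is then immediate from Minkowski's inequality, since $\|A_n-\alpha\|_{l_{p'}}\leq\|A_n\|_{l_{p'}}+\|\alpha\|_{l_{p'}}\leq 2M$ for every $n$, whence the supremum over $n$ is finite.

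Finally, for (iv) I would fix $x=(x_k)\in l_p$. Each row $A_n$ lies in $l_p^{\beta}=l_{p'}$, so $A_n(x)=\sum_k a_{nk}x_k$ converges, and by (ii) the series $\sum_k\alpha_k x_k$ converges as well. Writing $A_n(x)-\sum_k\alpha_k x_k=\sum_k(a_{nk}-\alpha_k)x_k$, I would fix $\epsilon>0$ and choose $m$ so large that $\big(\sum_{k>m}|x_k|^p\big)^{1/p}<\epsilon$, which is possible because $x\in l_p$. By H\"older's inequality and (iii) the tail is controlled \emph{uniformly in} $n$, namely $\big|\sum_{k>m}(a_{nk}-\alpha_k)x_k\big|\leq\|A_n-\alpha\|_{l_{p'}}\big(\sum_{k>m}|x_k|^p\big)^{1/p}\leq 2M\epsilon$, while the finite head $\sum_{k=0}^{m}(a_{nk}-\alpha_k)x_k$ tends to $0$ as $n\to\infty$ by (i), being a finite sum. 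Letting $n\to\infty$ and then $\epsilon\to0$ gives $\lim_n A_n(x)=\sum_k\alpha_k x_k$. I expect the main obstacle to be exactly this uniform-in-$n$ tail estimate: the whole argument hinges on having the single bound $\sup_n\|A_n-\alpha\|_{l_{p'}}<\infty$ established in (iii), which is what legitimizes the $\epsilon$-splitting and lets the pointwise convergence of finitely many entries finish the job.
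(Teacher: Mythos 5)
Your proof is correct. Note, however, that the paper offers no proof of this statement to compare against: it is quoted verbatim as a known result from Mursaleen and Noman \cite{MUR3}, so the only benchmark is the standard argument in the literature, and yours is essentially that argument. Each step checks out: (i) follows by applying $A$ to the unit vectors $e^{(k)}\in l_p$; the uniform bound $M=\sup_n\|A_n\|_{l_{p'}}<\infty$ is legitimate either by the classical characterization of $(l_p,l_\infty)$ that you invoke, or (perhaps more cleanly, and in the spirit of the $BK$-space machinery this paper uses) by Banach--Steinhaus, since each $A_n\in l_p^{\beta}=l_{p'}$ acts as a bounded functional on $l_p$ and $\sup_n|A_n(x)|<\infty$ for every $x$ because $Ax\in c\subset l_\infty$; (ii) follows from the finite-section limit argument you give, since $\sum_{k=0}^{m}|a_{nk}|^{p'}\leq M^{p'}$ uniformly in $n$; (iii) is Minkowski; and (iv) is the head/tail splitting with H\"older, where the uniform-in-$n$ control of the tail by $\sup_n\|A_n-\alpha\|_{l_{p'}}\big(\sum_{k>m}|x_k|^p\big)^{1/p}$ is exactly the point that makes the interchange of limits valid. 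The $p=1$ case ($p'=\infty$) goes through verbatim with the supremum norm, as you indicate. No gap.
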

\begin{lem}{(\rm \cite{MAL5}, Theorem 1.29)}{\label{lem5}}
Let $1 \leq p < \infty$. Then we have $l_{p}^{\beta} = l_{p{'}}$ and $\|a\|_{l_p}^*= \|a\|_{l_{p{'}}}$ for all $a\in l_{p{'}}$.
\end{lem}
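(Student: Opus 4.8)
The statement is the classical identification of the continuous dual of $l_p$ with $l_{p'}$, here phrased through $\beta$-duals and the functional norm $\|\cdot\|_{l_p}^*$ from (\ref{eq0}). The plan is to prove the two set inclusions $l_{p'}\subseteq l_p^\beta$ and $l_p^\beta\subseteq l_{p'}$ together with the matching norm estimates $\|a\|_{l_p}^*\le\|a\|_{l_{p'}}$ and $\|a\|_{l_{p'}}\le\|a\|_{l_p}^*$; combining them yields both assertions at once. I would treat $1<p<\infty$ and $p=1$ separately, since the latter uses $p'=\infty$.

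First I would dispatch the inclusion $l_{p'}\subseteq l_p^\beta$ by H\"older's inequality: for $a\in l_{p'}$ and $x\in l_p$ one has $\sum_k|a_kx_k|\le\|a\|_{l_{p'}}\|x\|_{l_p}<\infty$, so $\sum_k a_kx_k$ converges absolutely, i.e.\ $a\in l_p^\beta$. The same inequality gives $\big|\sum_k a_kx_k\big|\le\|a\|_{l_{p'}}$ for every $x\in S_{l_p}$, and taking the supremum over $S_{l_p}$ yields $\|a\|_{l_p}^*\le\|a\|_{l_{p'}}$. For $p=1$ this reads $\|a\|_{l_1}^*\le\sup_k|a_k|$.

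For the reverse direction with $1<p<\infty$ I would use the H\"older-optimizing test sequences. Fix $a\in l_p^\beta$; for each $n$ set $x_k^{(n)}=|a_k|^{p'}/a_k$ when $a_k\neq0$ and $k\le n$, and $x_k^{(n)}=0$ otherwise. Using the conjugate-exponent identity $(p'-1)p=p'$ one computes $\|x^{(n)}\|_{l_p}^p=\sum_{k\le n}|a_k|^{p'}$ and $\sum_k a_kx_k^{(n)}=\sum_{k\le n}|a_k|^{p'}$, so after normalizing $z^{(n)}=x^{(n)}/\|x^{(n)}\|_{l_p}\in S_{l_p}$ we obtain $\big|\sum_k a_kz_k^{(n)}\big|=\big(\sum_{k\le n}|a_k|^{p'}\big)^{1/p'}$. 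Hence $\big(\sum_{k\le n}|a_k|^{p'}\big)^{1/p'}\le\|a\|_{l_p}^*$ for every $n$, and letting $n\to\infty$ gives $\|a\|_{l_{p'}}\le\|a\|_{l_p}^*$; in particular $a\in l_{p'}$. For $p=1$ the choice $x=e_k\in S_{l_1}$ gives $|a_k|\le\|a\|_{l_1}^*$ for each $k$, so $\sup_k|a_k|\le\|a\|_{l_1}^*$ and $a\in l_\infty$.

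The one point needing care is that the displayed estimates only bound $\|a\|_{l_p}^*$ from below, so the conclusion $a\in l_{p'}$ genuinely requires knowing that $\|a\|_{l_p}^*$ is finite for every $a\in l_p^\beta$. This is precisely the fact recorded right after (\ref{eq0}): since $l_p$ is a $BK$ space, convergence of $\sum_k a_kx_k$ for all $x\in l_p$ forces the functional $x\mapsto\sum_k a_kx_k$ to be continuous (a Banach--Steinhaus argument applied to the partial-sum functionals), so the supremum in (\ref{eq0}) is finite. Granting this, the two inclusions and the two norm inequalities combine to give $l_p^\beta=l_{p'}$ and $\|a\|_{l_p}^*=\|a\|_{l_{p'}}$, and everything else is the routine exponent bookkeeping around $(p'-1)p=p'$.
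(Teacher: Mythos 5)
You cannot be compared against an internal argument here, because the paper does not prove this lemma at all: it is quoted as a known result from (\cite{MAL5}, Theorem 1.29), one of the facts the paper lists as fundamental for its investigation. Judged on its own, your proof is correct and is essentially the classical argument behind the cited theorem. H\"older's inequality gives $l_{p'}\subseteq l_p^{\beta}$ together with $\|a\|_{l_p}^{*}\le\|a\|_{l_{p'}}$; the optimizing sequences $x_k^{(n)}=|a_k|^{p'}/a_k$ (truncated at $n$, then normalized), with the bookkeeping $(p'-1)p=p'$, give $\bigl(\sum_{k\le n}|a_k|^{p'}\bigr)^{1/p'}\le\|a\|_{l_p}^{*}$, and the choice $x=e_k$ handles $p=1$. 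You also correctly isolate the one genuine subtlety: these displays are only lower bounds, so concluding $a\in l_{p'}$ requires knowing $\|a\|_{l_p}^{*}<\infty$ for every $a\in l_p^{\beta}$, and your Banach--Steinhaus justification (the partial-sum functionals $f_n(x)=\sum_{k=0}^{n}a_kx_k$ are continuous on the $BK$ space $l_p$ and converge pointwise, hence the limit functional is bounded) is precisely the standard proof of the fact the paper records, with a citation, immediately after (\ref{eq0}). An alternative that avoids Banach--Steinhaus is a gliding-hump construction producing $x\in l_p$ with $\sum_k a_kx_k$ divergent whenever $a\notin l_{p'}$, but your route is cleaner given that completeness of $l_p$ is available. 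Two harmless omissions: the degenerate case where $x^{(n)}=0$ should be excluded before normalizing, and one should note that for complex $a_k$ the identity $a_kx_k^{(n)}=|a_k|^{p'}$ still holds, so the argument is unaffected.
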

\begin{lem}{\rm \cite{MUR3}}{\label{lem6}}
Let $X \supset \phi$ and $Y$ be $BK$ spaces. Then we have $(X, Y)\subset \mathcal{B}(X, Y)$, i.e., every matrix $A\in (X, Y)$ defines an operator $L_A\in \mathcal{B}(X, Y)$, where $L_A(x)=Ax$ for all $x\in X$.
\end{lem}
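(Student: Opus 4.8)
The plan is to invoke the closed graph theorem. Since $X$ and $Y$ are $BK$ spaces they are in particular Banach spaces, so to prove that the linear map $L_A : X \to Y$ given by $L_A(x) = Ax$ is continuous it suffices to show that its graph is closed. Linearity of $L_A$ is immediate from the definition of the matrix product, and $L_A$ is well defined because $A \in (X,Y)$ forces $Ax \in Y$ for every $x \in X$; in particular each row $A_n = (a_{nk})_k$ lies in $X^\beta$, so that the series $(Ax)_n = \sum_{k} a_{nk} x_k$ converges for every $x \in X$ and each $n$.

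First I would record the two consequences of the $BK$ property that drive the argument: because the coordinate maps $x \mapsto x_k$ on $X$ and $z \mapsto z_n$ on $Y$ are continuous, convergence in $X$ (resp. $Y$) entails coordinatewise convergence. Thus if $x^{(m)} \to x$ in $X$ and $Ax^{(m)} \to z$ in $Y$, then $x_k^{(m)} \to x_k$ for every $k$ and $(Ax^{(m)})_n \to z_n$ for every $n$.

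The heart of the proof is to show that the graph is closed, i.e. that $z = Ax$. Fix $n$ and consider the functional $f_n : X \to \mathbb{K}$ defined by $f_n(x) = (Ax)_n = \sum_k a_{nk} x_k$. I would argue that $f_n$ is continuous on $X$: the partial-sum functionals $S_m^{(n)}(x) = \sum_{k=0}^{m} a_{nk} x_k$ are continuous, being finite linear combinations of the continuous coordinate functionals, and for each fixed $x \in X$ they converge (to $f_n(x)$) because $A_n \in X^\beta$; hence $\{S_m^{(n)}(x)\}_m$ is bounded for every $x$, and the Banach--Steinhaus theorem yields $\sup_m \|S_m^{(n)}\| < \infty$, so the pointwise limit $f_n$ is itself a continuous functional on $X$. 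This is precisely the standard embedding $X^\beta \hookrightarrow X'$ of the $\beta$-dual into the continuous dual $X'$, valid for a $BK$ space $X \supset \phi$, and is where the hypothesis $\phi \subset X$ enters. Continuity of $f_n$ then gives $(Ax^{(m)})_n = f_n(x^{(m)}) \to f_n(x) = (Ax)_n$, while coordinatewise convergence in $Y$ gives $(Ax^{(m)})_n \to z_n$; by uniqueness of limits $z_n = (Ax)_n$ for every $n$, whence $z = Ax$.

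Having shown the graph of $L_A$ is closed, the closed graph theorem delivers $L_A \in \mathcal{B}(X,Y)$, which is exactly the assertion $(X,Y) \subset \mathcal{B}(X,Y)$. I expect the main obstacle to be the continuity of the row functionals $f_n$, that is, the interchange of the limit in $m$ with the infinite summation over $k$: this cannot be carried out term by term without a uniform bound, and it is precisely the $BK$ structure together with $\phi \subset X$ that supplies such a bound through Banach--Steinhaus.
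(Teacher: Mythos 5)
Your proof is correct: the well-definedness and linearity of $L_A$, the continuity of each row functional $f_n$ via Banach--Steinhaus applied to the partial-sum functionals, the closedness of the graph via coordinatewise convergence in both $BK$ spaces, and the concluding appeal to the closed graph theorem are all sound, and this is precisely the standard argument. The paper itself gives no proof of this lemma (it is quoted from the reference of Mursaleen and Noman, where it ultimately rests on the same closed-graph reasoning), so your attempt reproduces, in essentially the same way, the proof that the paper delegates to the literature.
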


\begin{lem}{\rm \cite{DJO}}{\label{lem7}}
Let $X\supset\phi$ be a $BK$ space and $Y$ be any of the spaces $c_0$, $c$ or $l_\infty$. If $A\in (X, Y)$, then we have
$$\|L_A\|= \|A\|_{(X, l_\infty)}=\displaystyle\sup_{n}\|A_n\|_X ^{*}<\infty.$$
\end{lem}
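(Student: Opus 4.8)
The plan is to reduce the assertion to the case of operators into $l_\infty$ and then to identify the operator norm with a supremum of functional norms taken over the rows of $A$. First I would note that each of the spaces $c_0$, $c$, $l_\infty$ is a $BK$ space whose norm is $\|\cdot\|_\infty$, so for $Y\in\{c_0,c,l_\infty\}$ the inclusion $Y\subset l_\infty$ is norm preserving and $A\in(X,Y)$ forces $A\in(X,l_\infty)$. By Lemma \ref{lem6}, $A$ then defines $L_A\in\mathcal{B}(X,l_\infty)$ (and also $L_A\in\mathcal{B}(X,Y)$) with finite operator norm, which already accounts for the finiteness claimed in the statement. Since $\|L_A(x)\|_Y=\|Ax\|_\infty$ for every $x\in S_X$, the two operator norms agree and $\|L_A\|=\|A\|_{(X,l_\infty)}$.

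The substance of the lemma is the identity $\|A\|_{(X,l_\infty)}=\sup_n\|A_n\|_X^{*}$. I would begin from $\|A\|_{(X,l_\infty)}=\sup_{x\in S_X}\|Ax\|_\infty=\sup_{x\in S_X}\sup_n|A_n(x)|$, where $A_n(x)=\sum_k a_{nk}x_k$. The hypothesis $A\in(X,l_\infty)$ means precisely that each row $A_n$ lies in $X^\beta$; hence, by the remark following (\ref{eq0}), the functional $x\mapsto A_n(x)$ is well defined and continuous on the $BK$ space $X$ and its norm equals $\|A_n\|_X^{*}=\sup_{x\in S_X}|A_n(x)|<\infty$. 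Interchanging the two suprema, which is unconditional since both sides equal the supremum of the single nonnegative function $(n,x)\mapsto|A_n(x)|$ over $\mathbb{N}_0\times S_X$, yields $\sup_{x\in S_X}\sup_n|A_n(x)|=\sup_n\sup_{x\in S_X}|A_n(x)|=\sup_n\|A_n\|_X^{*}$, as required.

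I do not expect a genuine obstacle here: the only points that need care are the well-definedness of $\|A_n\|_X^{*}$, which rests on $A_n\in X^\beta$ (a direct consequence of $A\in(X,Y)$ together with $Y\subset w$), and the fact that the common value of the interchanged suprema is finite, which is inherited from the boundedness of $L_A$ supplied by Lemma \ref{lem6}. No uniform-boundedness or convergence hypothesis beyond the standing $BK$-space assumptions is required, precisely because the interchange of suprema holds verbatim for any nonnegative double-indexed family.
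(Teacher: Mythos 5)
Your proof is correct, but there is nothing in the paper to compare it against: the paper does not prove this lemma at all, it imports it verbatim from the reference \cite{DJO} (Djolovi\'{c}), as it does with the other tool lemmas in Section 5. Your argument is the standard one and it is sound: the reduction to $l_\infty$ is legitimate because $c_0$, $c$, $l_\infty$ all carry the same sup norm, so $\|L_A\|_{\mathcal{B}(X,Y)}$ coincides with the operator norm of $A$ viewed as a map into $l_\infty$; the identity $\sup_{x\in S_X}\sup_n|A_n(x)|=\sup_n\sup_{x\in S_X}|A_n(x)|=\sup_n\|A_n\|_X^*$ is an unconditional interchange of suprema of a nonnegative family; and the two inputs that genuinely need justification are exactly the ones you cite from the paper's toolkit, namely that $A\in(X,Y)$ forces $A_n\in X^\beta$ (so that $\|A_n\|_X^*$ is defined and finite, by the remark following (\ref{eq0}), which ultimately rests on the closed graph theorem in the $BK$ space $X$) and that $L_A$ is bounded (Lemma \ref{lem6}, which supplies the finiteness of the common value). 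One could alternatively get the finiteness of $\sup_n\|A_n\|_X^*$ from Banach--Steinhaus applied to the row functionals, but routing it through Lemma \ref{lem6} as you do is cleaner given what the paper provides.
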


\begin{lem}{\rm \cite{MAL5}}{\label{lem9}}
Let $Q \in \mathcal{M}_{X}$, where $X = l_p$ for $1 \leq p < \infty$ or $c_0$. If $P_m : c_0 \rightarrow c_0$ $(m \in \mathbb{N}_{0})$ be the operator defined by $ P_m(x) = (x_0, x_1, \cdots, x_m, 0, 0, \cdots)$ for all $x =(x_k) \in X$. Then we have
$$ \chi (Q) = \displaystyle \lim_{m \rightarrow \infty}\Big( \sup_{x \in Q}\|(I -P_m)(x)\| \Big),$$
where $I$ is the identity operator on $X$.
\end{lem}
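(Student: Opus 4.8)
The plan is to establish the two inequalities $\chi(Q)\le \mu$ and $\mu\le\chi(Q)$, where $\mu=\lim_{m\to\infty}\left(\sup_{x\in Q}\|(I-P_m)(x)\|\right)$, after first verifying that this limit exists. Writing $R_m=I-P_m$, so that $R_m(x)=(0,\dots,0,x_{m+1},x_{m+2},\dots)$, I would first note that for every $x\in X$ one has $\|R_{m+1}(x)\|\le\|R_m(x)\|$, since deleting one further coordinate cannot increase the $l_p$ (resp.\ $c_0$) norm. Hence the sequence $\left(\sup_{x\in Q}\|R_m(x)\|\right)_m$ is nonincreasing and bounded below by zero, so $\mu$ is well defined. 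It then remains to sandwich $\chi(Q)$ between $\mu$ from both sides.

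For the upper bound $\chi(Q)\le\mu$, I would fix $m$ and use the decomposition $x=P_m(x)+R_m(x)$, which gives $Q\subset P_m(Q)+R_m(Q)$. The set $P_m(Q)$ lies in the finite-dimensional subspace $P_m(X)\cong\mathbb{K}^{m+1}$ and is bounded, hence totally bounded, so $\chi(P_m(Q))=0$. Using the subadditivity of $\chi$ recalled earlier together with the fact that $R_m(Q)$ is contained in the single ball about the origin of radius $\sup_{x\in Q}\|R_m(x)\|$, I obtain $\chi(Q)\le\chi(P_m(Q))+\chi(R_m(Q))\le\sup_{x\in Q}\|R_m(x)\|$. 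Letting $m\to\infty$ yields $\chi(Q)\le\mu$.

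For the lower bound $\mu\le\chi(Q)$, I would fix any $r>\chi(Q)$; by the definition of the Hausdorff measure there are finitely many centres $z_1,\dots,z_n\in X$ with $Q\subset\bigcup_{i=1}^n B(z_i,r)$. Given $x\in Q$, choosing $i$ with $\|x-z_i\|<r$ and using $\|R_m\|\le 1$ gives $\|R_m(x)\|\le\|R_m(x-z_i)\|+\|R_m(z_i)\|\le\|x-z_i\|+\|R_m(z_i)\|<r+\delta_m$, where $\delta_m=\max_{1\le i\le n}\|R_m(z_i)\|$. Since each $z_i$ belongs to $l_p$ (resp.\ $c_0$), its tail norm $\|R_m(z_i)\|$ tends to $0$, and as there are only finitely many centres, $\delta_m\to 0$. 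Thus $\sup_{x\in Q}\|R_m(x)\|\le r+\delta_m$, and letting $m\to\infty$ gives $\mu\le r$; since $r>\chi(Q)$ was arbitrary, $\mu\le\chi(Q)$. Combining the two inequalities proves the identity. The main obstacle is precisely this lower bound: one must convert a finite ball cover of radius $r$ into a \emph{uniform} tail estimate, which depends essentially on the fact that in $l_p$ and $c_0$ the tail $\|R_m(z)\|$ of each fixed vector vanishes and that the maximum over finitely many centres preserves this decay; the upper bound is comparatively routine once one invokes total boundedness of bounded sets in finite dimensions.
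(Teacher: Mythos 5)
Your proof is correct, but note that there is no paper proof to compare it with: the paper states this lemma as a known result quoted from the reference [MAL5] (Malkowsky--Rako\v{c}evi\'{c}) and gives no argument. What you have written is the standard Gol'den\v{s}te\u{\i}n--Gohberg--Markus argument, and it is complete. The monotonicity of $m \mapsto \sup_{x\in Q}\|(I-P_m)(x)\|$ (coordinate deletion does not increase the norm in $l_p$ or $c_0$) justifies existence of the limit $\mu$. Your upper bound correctly combines the decomposition $Q \subset P_m(Q)+(I-P_m)(Q)$ with exactly the two properties of $\chi$ the paper lists: subadditivity under set addition, and $\chi=0$ for totally bounded sets, applied to the bounded subset $P_m(Q)$ of a finite-dimensional subspace; the additional observation that a set contained in one ball of radius $\rho$ has $\chi\le\rho$ follows at once from the paper's definition of $\chi$ and deserves a single line, but is not a gap. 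Your lower bound is the essential half, and you handle it properly: $\|I-P_m\|\le 1$ converts the finite cover $Q\subset\bigcup_i B(z_i,r)$ into $\sup_{x\in Q}\|(I-P_m)(x)\|\le r+\max_i\|(I-P_m)(z_i)\|$, and the vanishing of the finitely many tail norms is precisely where the hypothesis $X=l_p$ $(1\le p<\infty)$ or $c_0$ enters --- the identity genuinely fails in $l_\infty$, where $\|(I-P_m)(e)\|_\infty=1$ for all $m$. In short, your argument supplies a self-contained proof of what the paper delegates to the literature.
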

Let $z=(z_n) \in c$. Then $z$ has a unique representation $z = \ell e + \displaystyle \sum_{n=0}^{\infty}(z_n -\ell)e_{n}$, where $\ell = \displaystyle \lim_{n \rightarrow \infty} z_n$. We now define the projections
$P_m$ $(m \in \mathbb{N}_{0})$ from $c$ onto the linear span of $\{e, e_0, e_1, \cdots, e_m \}$ as
$$P_{m}(z) = \ell e + \displaystyle \sum_{n=0}^{m}(z_n -\ell)e_{n},$$
for all $z \in c$ and $\ell= \displaystyle \lim_{n \rightarrow \infty} z_n$.\\
 Then the following result gives an estimate for the Hausdorff measure of noncompactness in the $BK$ space $c$.
\begin{lem}{\rm  \cite{MAL5}}{\label{lem10}}
Let $Q\in \mathcal{M}_c$ and $P_m: c\rightarrow c$ be the projector from $c$ onto the linear span of $\{e, e_{0}, e_{1}, \ldots e_m\}$. Then we have
$$\frac{1}{2} \displaystyle\lim_{m\rightarrow\infty}\Big( \displaystyle\sup_{x\in Q}\|(I-P_m)(x)\|_{l_{\infty}}\Big)\leq \chi(Q)\leq \displaystyle\lim_{m\rightarrow\infty}\Big( \displaystyle\sup_{x\in Q}\|(I-P_m)(x)\|_{l_{\infty}}\Big),$$
where $I$ is the identity operator on $c$.
\end{lem}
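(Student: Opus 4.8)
The plan is to prove the two displayed inequalities separately, writing $r_m(Q)=\displaystyle\sup_{x\in Q}\|(I-P_m)(x)\|_{l_\infty}$ throughout. First I would record the explicit form of the residual operator: for $z\in c$ with $\ell=\lim_n z_n$, the definition of $P_m$ gives that $(I-P_m)(z)$ has $n$-th coordinate equal to $0$ for $n\le m$ and to $z_n-\ell$ for $n>m$, so that $\|(I-P_m)(z)\|_{l_\infty}=\sup_{n>m}|z_n-\ell|$. In particular the sequence $(r_m(Q))_m$ is non-increasing and bounded below, hence $\lim_{m\to\infty} r_m(Q)$ exists and equals $\inf_m r_m(Q)$; this justifies writing the limits in the statement.

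For the upper bound $\chi(Q)\le\lim_m r_m(Q)$, I would exploit that each $P_m$ is a finite rank operator, mapping $c$ into the $(m+2)$-dimensional span of $\{e,e_0,\dots,e_m\}$. Hence $P_m(Q)$ is a bounded subset of a finite-dimensional space and therefore totally bounded, giving $\chi(P_m(Q))=0$. Using $Q\subset P_m(Q)+(I-P_m)(Q)$ together with the subadditivity and monotonicity of $\chi$ recalled in the preliminaries, one obtains $\chi(Q)\le\chi(P_m(Q))+\chi((I-P_m)(Q))=\chi((I-P_m)(Q))$. Finally, since any bounded set $A$ is contained in a single ball of radius $\sup_{a\in A}\|a\|$, one has $\chi((I-P_m)(Q))\le\sup_{x\in Q}\|(I-P_m)(x)\|_{l_\infty}=r_m(Q)$. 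As this holds for every $m$, taking the infimum over $m$ yields the upper estimate.

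The genuine work is the lower bound $\tfrac12\lim_m r_m(Q)\le\chi(Q)$, and this is where the factor $\tfrac12$ must be produced. I would set $\rho=\chi(Q)$, fix $\epsilon>0$, and use the definition of $\chi$ to get finitely many centres $x_1,\dots,x_N\in c$ with $Q\subset\bigcup_{i=1}^N B(x_i,\rho+\epsilon)$. The key elementary estimate is that $\|I-P_m\|_{\mathcal{B}(c,c)}\le2$: for any $z\in c$ we have $|\ell|=|\lim_n z_n|\le\|z\|_{l_\infty}$, whence $|z_n-\ell|\le 2\|z\|_{l_\infty}$ for every $n$, so $\|(I-P_m)(z)\|_{l_\infty}\le 2\|z\|_{l_\infty}$. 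Given $x\in Q$, I would choose $i$ with $\|x-x_i\|_{l_\infty}<\rho+\epsilon$ and split $(I-P_m)(x)=(I-P_m)(x-x_i)+(I-P_m)(x_i)$; the first term is bounded by $2(\rho+\epsilon)$ by the operator-norm estimate. For the second term, each fixed centre satisfies $\|(I-P_m)(x_i)\|_{l_\infty}=\sup_{n>m}|x_{i,n}-\ell_{x_i}|\to0$ as $m\to\infty$ because $x_i\in c$; since there are only finitely many centres, I may pick $m_0$ so large that $\|(I-P_m)(x_i)\|_{l_\infty}<\epsilon$ for all $i$ and all $m\ge m_0$ simultaneously. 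The triangle inequality then gives $\|(I-P_m)(x)\|_{l_\infty}<2(\rho+\epsilon)+\epsilon$ uniformly in $x\in Q$, hence $r_m(Q)\le2\rho+3\epsilon$ for $m\ge m_0$. Letting $m\to\infty$ and then $\epsilon\to0$ yields $\lim_m r_m(Q)\le2\chi(Q)$, which is the claimed inequality.

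The hard part will be precisely this lower bound: one must locate the source of the constant $\tfrac12$ (the operator norm of $I-P_m$ on $c$ equals $2$, in contrast to the value $1$ on $c_0$ that produces the sharp identity of Lemma~\ref{lem9}), and one must exploit the finiteness of the covering to obtain a uniform choice of $m$ across all centres. The explicit coordinate description of $I-P_m$ and the finite-dimensionality of the range of $P_m$ are routine once set up; the careful bookkeeping of the two $\epsilon$-type errors against the factor $2$ is the crux.
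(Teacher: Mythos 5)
Your proof is correct, and it is essentially the standard argument: the paper itself gives no proof of this lemma, quoting it from Malkowsky--Rako\v{c}evi\'{c} \cite{MAL5}, and what you have written is precisely the Goldenstein--Gohberg--Markus argument that the cited source uses (there stated for a general Schauder basis, with the constant $\frac{1}{2}$ arising as the reciprocal of $\limsup_m\|I-P_m\|$, which equals $2$ for the basis $\{e,e_0,e_1,\ldots\}$ of $c$). Both halves of your argument are sound: the upper bound via $\chi(P_m(Q))=0$ and subadditivity, and the lower bound via the finite $(\chi(Q)+\epsilon)$-net, the estimate $\|(I-P_m)(z)\|_{l_\infty}=\sup_{n>m}|z_n-\ell|\leq 2\|z\|_{l_\infty}$, and the uniform choice of $m_0$ over the finitely many centres.
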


\begin{lem}{\rm  \cite{MAL5}}{\label{lem8}}
Let $X,Y$ be two Banach spaces and $L \in \mathcal{B}(X, Y)$. Then $$\|L\|_{\chi} = \chi(L(S_X))$$
and $$L \in \mathcal{C}(X, Y) ~\mbox{if and only if}~ \|L\|_{\chi} =0.$$
\end{lem}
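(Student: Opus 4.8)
The relation $\|L\|_\chi=\chi(L(S_X))$ is the defining equation for the Hausdorff measure of noncompactness of the operator $L$ (in the sense of \cite{MAL5}), so the first displayed assertion carries no separate argument; the entire content of the lemma lies in the equivalence $L\in\mathcal{C}(X,Y)\Leftrightarrow\chi(L(S_X))=0$. My plan is to reduce this equivalence to two elementary facts already available: the characterization recalled in the text preceding the lemma, namely $\chi(Q)=0$ if and only if the bounded set $Q$ is totally bounded, and the standard metric-space fact that in a complete space a set is totally bounded precisely when it is relatively compact (here $Y$ is Banach, hence complete).

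For the forward implication I would argue directly from the definition of compactness given earlier in the section. Since $S_X$ is a bounded subset of $X$, compactness of $L$ forces every sequence in $L(S_X)$ to have a convergent subsequence, so $L(S_X)$ is relatively compact in $Y$. A relatively compact subset of a metric space is totally bounded, and therefore $\chi(L(S_X))=0$, i.e. $\|L\|_\chi=0$.

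For the converse I would assume $\|L\|_\chi=\chi(L(S_X))=0$, so that $L(S_X)$ is totally bounded, and hence, by completeness of $Y$, relatively compact. The substantive step is to transfer this from the sphere to the closed unit ball $B_X=\{x\in X:\|x\|\le 1\}$. Writing any nonzero $x\in B_X$ as $x=\|x\|\,(x/\|x\|)$ with $x/\|x\|\in S_X$ and using linearity gives $Lx=\|x\|\,L(x/\|x\|)$, whence $L(B_X)\subset\{tz: t\in[0,1],\ z\in\overline{L(S_X)}\}$. The right-hand side is the image of the compact product $[0,1]\times\overline{L(S_X)}$ under the continuous map $(t,z)\mapsto tz$, hence is compact, so $L(B_X)$ is relatively compact. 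Since every bounded subset of $X$ lies in $rB_X$ for some $r>0$ and $L(rB_X)=rL(B_X)$ is again relatively compact, $L$ sends bounded sets to relatively compact sets, which is exactly the compactness of $L$.

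The step I expect to be the main obstacle is this converse passage from $S_X$ to $B_X$: total boundedness of $L(S_X)$ alone does not formally name $L$ as compact, and the upgrade relies on completeness of $Y$ together with the continuity of scalar multiplication on the compact set $[0,1]\times\overline{L(S_X)}$. One must also be careful not to omit the origin (the case $x=0$, equivalently $t=0$) when asserting the covering of $L(B_X)$, so that the inclusion and the resulting relative compactness genuinely cover all of $B_X$.
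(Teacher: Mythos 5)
The paper never proves this lemma: it is imported verbatim from Malkowsky and Rako\v{c}evi\'{c} \cite{MAL5} and used as a black box, so there is no internal argument to compare yours with; the only useful question is whether your blind proof is sound, and it is. The forward direction correctly chains the paper's sequential definition of a compact operator, relative compactness of $L(S_X)$ in the metric space $Y$, total boundedness, and the property $\chi(Q)=0$ if and only if $Q$ is totally bounded, which the paper records just before the lemma. The converse is also handled correctly: total boundedness plus completeness of $Y$ gives compactness of $\overline{L(S_X)}$, and your scaling argument --- $L(B_X)\subset\{tz:t\in[0,1],\ z\in\overline{L(S_X)}\}$, the right-hand side being the continuous image of the compact set $[0,1]\times\overline{L(S_X)}$ --- legitimately upgrades relative compactness from the sphere to the closed unit ball, hence by dilation to $L(Q)$ for every bounded $Q$, which is exactly the paper's definition of compactness of $L$. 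Your care with the point $x=0$ is warranted; the only degenerate case you do not mention is $X=\{0\}$, where $S_X=\emptyset$ and the covering argument is vacuous, but there the lemma holds trivially.

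One caveat concerns the first identity. You discharge $\|L\|_{\chi}=\chi(L(S_X))$ as a definition. That reading is consistent with this paper, which nowhere else defines $\|L\|_{\chi}$ and only ever uses it through this lemma and the one following it. In the cited source, however, $\|L\|_{\chi}$ is defined as $\inf\{k>0:\chi(L(Q))\le k\,\chi(Q) \mbox{~for all bounded~} Q\subset X\}$, and the identity $\|L\|_{\chi}=\chi(L(S_X))=\chi(L(B_X))$ is itself a theorem, proved using the monotonicity and scaling properties of $\chi$ that the paper lists. So a reader working from the source's definition would regard that half of the lemma as still unproved in your write-up; it needs its own short argument, much of which (the sphere-to-ball inclusion) you already have on the page.
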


Let $\mathcal{F}_{m} = \{F \in \mathcal{F}: n > m, ~\forall n \in F \}$, $m \in \mathbb{N}$ and $\mathcal{F}$ is the collection of nonempty and finite subsets of $\mathbb{N}$.
\begin{lem}{\cite{MUR2}}{\label{lem80}}
Let $X \supset \phi$ be a $BK$ space. \\
$(a)$ If $A \in (X, c_0)$, then
$$ \|L_A \|_{\chi} = \displaystyle \limsup_{n \rightarrow \infty} \| A_n \|_{X}^{*} $$
and
\begin{center}
$L_A$ is compact if and only if $\displaystyle\lim_{n \rightarrow \infty} \| A_n  \|_{X}^{*} =0$.
\end{center}
$(b)$ If $A \in (X, l_{\infty})$, then
$$0 \leq \|L_A \|_{\chi} \leq \displaystyle \limsup_{n \rightarrow \infty} \| A_n \|_{X}^{*} $$ and
\begin{center}
$L_A$ is compact if and only if $\displaystyle\lim_{n \rightarrow \infty} \| A_n \|_{X}^{*} =0$.
\end{center}
$(c)$ If $A \in (X, l_1)$, then
$$ \displaystyle\lim_{m \rightarrow \infty} \Big( \sup_{F \in \mathcal{F}_{m}} \Big \|\sum_{n \in F} A_n \Big \|_{X}^{*} \Big) \leq \|L_A \|_{\chi} \leq 4 \displaystyle\lim_{m \rightarrow \infty} \Big( \sup_{F \in \mathcal{F}_{m}} \Big \|\sum_{n \in F} A_n \Big \|_{X}^{*} \Big)$$
and
\begin{center}
$L_A$ is compact if and only if $\displaystyle\lim_{m \rightarrow \infty} \Big( \sup_{F \in \mathcal{F}_{m}} \Big \|\sum_{n \in F} A_n \Big \|_{X}^{*} \Big) =0$.
\end{center}
\end{lem}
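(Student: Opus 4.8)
The plan is to route everything through Lemma \ref{lem8}, which reduces the computation of $\|L_A\|_\chi$ to estimating the Hausdorff measure $\chi(L_A(S_X))$ of the image of the unit sphere, and then to exploit the Hausdorff-measure formula of Lemma \ref{lem9} in the relevant codomains. The single computational fact I would use repeatedly is that, for each fixed row, $\sup_{x\in S_X}|A_n(x)| = \|A_n\|_X^*$ by the definition (\ref{eq0}), so that suprema over $x\in S_X$ may be freely interchanged with suprema over the row index $n$.

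For part $(a)$, since $A\in(X,c_0)$ the set $Q=L_A(S_X)$ is a bounded subset of $c_0$, so Lemma \ref{lem9} applies with the truncation operators $P_m$ and gives $\chi(Q)=\lim_{m\to\infty}\sup_{x\in S_X}\|(I-P_m)(Ax)\|_{l_\infty}$. Writing $\|(I-P_m)(Ax)\|_{l_\infty}=\sup_{n>m}|A_n(x)|$ and interchanging the two suprema, this equals $\lim_{m\to\infty}\sup_{n>m}\|A_n\|_X^* = \limsup_{n\to\infty}\|A_n\|_X^*$. Combined with Lemma \ref{lem8} this yields the stated equality, and the compactness criterion is then immediate, since $\|L_A\|_\chi=0$ holds exactly when the nonnegative limit superior is $0$, i.e. when $\lim_n\|A_n\|_X^*=0$.

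For part $(b)$ the codomain $l_\infty$ carries no exact Hausdorff formula, so I would obtain only the upper estimate by a truncation/finite-rank argument. Decompose $L_A=P_mL_A+(I-P_m)L_A$; the operator $P_mL_A$ has finite rank, hence $\|P_mL_A\|_\chi=0$, and by subadditivity of $\|\cdot\|_\chi$ together with the standard bound $\|L\|_\chi\le\|L\|$ one gets $\|L_A\|_\chi\le\|(I-P_m)L_A\|=\sup_{n>m}\|A_n\|_X^*$; letting $m\to\infty$ gives $\|L_A\|_\chi\le\limsup_n\|A_n\|_X^*$, while $0\le\|L_A\|_\chi$ is trivial. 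The sufficiency half of the compactness criterion (if $\lim_n\|A_n\|_X^*=0$ then $L_A$ is compact) follows at once from this upper bound via Lemma \ref{lem8}. The delicate point here is the converse: since only a one-sided estimate is available in $l_\infty$, the necessity of $\lim_n\|A_n\|_X^*=0$ does not follow from the measure estimate alone and must be argued separately, in contrast to $(a)$, where the equality makes both directions automatic.

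For part $(c)$, the space $l_1$ is covered by Lemma \ref{lem9} (the case $1\le p<\infty$), so $\chi(L_A(S_X))=\lim_{m\to\infty}\sup_{x\in S_X}\sum_{n>m}|A_n(x)|$, and it remains to compare the tail sum $\sup_{x\in S_X}\sum_{n>m}|A_n(x)|$ with $\sup_{F\in\mathcal{F}_m}\|\sum_{n\in F}A_n\|_X^*$. The lower comparison is direct: for finite $F\in\mathcal{F}_m$ one has $|\sum_{n\in F}A_n(x)|\le\sum_{n>m}|A_n(x)|$, so taking suprema gives $\sup_F\|\sum_{n\in F}A_n\|_X^*\le\sup_{x}\sum_{n>m}|A_n(x)|$. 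The upper comparison is the main obstacle and rests on the scalar inequality $\sum_n|z_n|\le 4\sup_{F}|\sum_{n\in F}z_n|$, valid for complex numbers by splitting into real and imaginary, positive and negative parts, applied to $z_n=A_n(x)$; after taking the supremum over $x\in S_X$ this yields $\sup_x\sum_{n>m}|A_n(x)|\le 4\sup_F\|\sum_{n\in F}A_n\|_X^*$. Letting $m\to\infty$ and invoking Lemma \ref{lem8} gives both the two-sided estimate for $\|L_A\|_\chi$ and, since the two bounding quantities vanish simultaneously, the compactness characterization. I expect the careful interchange of the suprema over $x$ and over the finite sets $F$, together with securing the constant $4$, to be the principal technical step of the whole argument.
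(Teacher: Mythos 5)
The paper offers no proof of this lemma at all --- it is quoted verbatim from \cite{MUR2} --- so the only meaningful comparison is with the argument in that source. Your reconstructions of parts $(a)$ and $(c)$ are correct and are essentially that argument: reduce to $\chi(L_A(S_X))$ via Lemma \ref{lem8}, apply the truncation formula of Lemma \ref{lem9} in $c_0$ resp.\ $l_1$, interchange the suprema over $x\in S_X$ and over $n$ (resp.\ over $F\in\mathcal{F}_m$), and, in $(c)$, invoke the scalar inequality $\sum_n|z_n|\le 4\sup_{F}\big|\sum_{n\in F}z_n\big|$, which is exactly where the constant $4$ comes from. Your upper estimate in $(b)$, via the decomposition $L_A=P_mL_A+(I-P_m)L_A$, the vanishing of $\|\cdot\|_\chi$ on finite-rank operators, and $\|L\|_\chi\le\|L\|$, is also sound.

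The one genuine gap is precisely the point you flagged and postponed in $(b)$: the ``only if'' half of the compactness criterion. You cannot close it by any separate argument, because it is false as stated. Take any nonzero $a=(a_k)\in X^\beta$ and let every row of $A$ equal $a$; then $Ax=\big(\sum_k a_kx_k\big)e$, so $A\in(X,l_\infty)$ and $L_A$ is a rank-one, hence compact, operator, while $\|A_n\|_X^*=\|a\|_X^*>0$ for every $n$, so $\lim_{n\rightarrow\infty}\|A_n\|_X^*\ne 0$. (Such an $A$ lies in $(X,l_\infty)$ but not in $(X,c_0)$ or $(X,l_1)$, which is why this does not contradict parts $(a)$ and $(c)$, where the two-sided estimates make the equivalence automatic, as you observed.) The defect is in the paper's transcription of the lemma, not in your argument: in \cite{MUR2} the $l_\infty$ case asserts only that $L_A$ is compact \emph{if} $\lim_{n\rightarrow\infty}\|A_n\|_X^*=0$, and the ``if and only if'' printed here is a misquotation. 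Your proposal therefore proves everything in the statement that is actually true; to finish, replace the equivalence in $(b)$ by the one-sided implication, or record the counterexample above.
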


We establish the following lemmas which are required to characterize the classes of compact operators with the help of Hausdorff measure of noncompactness.
\begin{lem}{\label{lem11}}
If $ a=(a_k)\in [l_p(r, s, t; B)]^ \beta$, then
$\tilde{a}= (\tilde{a}_k)\in l_{p'}$ and the equality $$\displaystyle\sum_{k=0}^{\infty}a_k x_k=\displaystyle\sum_{k=0}^{\infty}\tilde{a}_k y_k$$ holds for every $x=(x_k)\in l_{p}(r, s, t; B)$ and $y =(y_k) \in l_p$, where $y = (A(r,s,t). B)x$. In addition
\begin{equation}{\label{eq1}}
 \tilde{a}_{k}= r_{k} \bigg [ \frac{a_{k}}{s_{0}t_{k}}\frac{1}{u} + \sum_{i=k}^{k+1}(-1)^{i-k} \frac{D_{i-k}^{(s)}}{t_{i}}\Big( \sum_{j=k+1}^{\infty}\frac{(-v)^{j-i}}{u^{j-i+1}} a_j \Big)  + \sum_{i=k+2}^{\infty}(-1)^{i-k} \frac{D_{i-k}^{(s)}}{t_{i}}\Big( \sum_{j=i}^{\infty}\frac{(-v)^{j-i}}{u^{j-i+1}} a_j \Big)
  \bigg ].
\end{equation}
\end{lem}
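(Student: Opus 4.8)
The plan is to reduce the whole statement to a single matrix lying in the class $(l_p,c)$ and then read off all the assertions from Lemma \ref{lem4}. First I would recall the purely algebraic identity established in the proof of Theorem 4.5: for any $a=(a_k)\in w$ and any $x\in l_p(r,s,t;B)$ with associated sequence $y=(A(r,s,t).B)x$, one has
\[
\sum_{k=0}^{n}a_k x_k=(Ey)_n\qquad(n\in\mathbb{N}_0),
\]
where $E=(e_{nk})$ is the triangle defined in (\ref{eq20}). At this stage every sum has finite upper index $n$, so the identity holds with no convergence hypothesis; it is a consequence only of the inverse relation (\ref{3eq}) expressing $x$ in terms of $y$, and is therefore valid in the normed case as well.

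Next I would translate the hypothesis $a\in[l_p(r,s,t;B)]^\beta$. Since $A(r,s,t).B$ is a triangle, the map $x\mapsto y$ is a bijection from $l_p(r,s,t;B)$ onto $l_p$. By definition $a\in[l_p(r,s,t;B)]^\beta$ means $(a_kx_k)\in cs$, i.e.\ $\big(\sum_{k=0}^{n}a_kx_k\big)_n\in c$, for every $x\in l_p(r,s,t;B)$. Using the identity above, this is equivalent to $\big((Ey)_n\big)_n\in c$ for every $y\in l_p$, that is, $E\in(l_p,c)$.

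Now I would invoke Lemma \ref{lem4} with $A=E$. Part $(i)$ yields that $\tilde a_k:=\lim_{n\to\infty}e_{nk}$ exists for each $k$, part $(ii)$ gives $\tilde a=(\tilde a_k)\in l_{p'}$ (so that, by Lemma \ref{lem5}, the series $\sum_k\tilde a_k y_k$ converges for $y\in l_p$), and part $(iv)$ gives $\lim_{n\to\infty}(Ey)_n=\sum_{k=0}^{\infty}\tilde a_k y_k$ for every $y\in l_p$. Combining this with the first step,
\[
\sum_{k=0}^{\infty}a_k x_k=\lim_{n\to\infty}\sum_{k=0}^{n}a_k x_k=\lim_{n\to\infty}(Ey)_n=\sum_{k=0}^{\infty}\tilde a_k y_k,
\]
which is precisely the asserted equality, together with $\tilde a\in l_{p'}$.

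It remains to check that $\tilde a_k=\lim_{n\to\infty}e_{nk}$ coincides with the closed form (\ref{eq1}). By construction $e_{nk}$ is exactly the bracket in (\ref{eq1}) with each of the infinite summations $\sum_{j=k+1}^{\infty}$, $\sum_{i=k+2}^{\infty}$ and $\sum_{j=i}^{\infty}$ truncated at the index $n$; letting $n\to\infty$ restores the full series and produces (\ref{eq1}). The one delicate point, and the step I expect to demand the most care, is justifying this termwise passage to the limit, i.e.\ that the inner series appearing in (\ref{eq1}) actually converge; this is ensured because $a$ belongs to the $\beta$-dual, equivalently because the existence of $\lim_{n}e_{nk}$ forces convergence of exactly the series encoded by the conditions $B_1$ and $B_2$. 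With the limit so identified, the proof is complete.
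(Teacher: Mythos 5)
Your proposal is correct in its main structure, but it takes a genuinely different route from the paper. The paper's own proof is essentially a citation: it invokes a general theorem of Djolovi\'{c} (\cite{DJO}, Theorem 2.3 and Remark 2.4) on $\beta$-duals of matrix domains of triangles in $BK$ spaces, applied with $X=l_p$ and the triangle $A(r,s,t).B$; that theorem hands over at once $\tilde{a}=R(a)\in l_p^{\beta}=l_{p'}$ and the identity $\sum_k a_kx_k=\sum_k R_k(a)T_k(x)$, and the formula (\ref{eq1}) is simply written down as the expression for $R_k(a)$. You instead rebuild the lemma from material internal to the paper: the finite identity $\sum_{k=0}^{n}a_kx_k=(Ey)_n$ from the proof of Theorem 4.5, the translation of $a\in[l_p(r,s,t;B)]^{\beta}$ into $E\in(l_p,c)$ via the bijection $x\mapsto y$, and Lemma \ref{lem4} applied to $E$, whose parts (i), (ii), (iv) deliver exactly the existence of $\tilde{a}_k=\lim_n e_{nk}$, the membership $\tilde{a}\in l_{p'}$, and the equality of the two series. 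This is a legitimate, self-contained alternative (it is in fact the normed-case analogue of the paper's own proof of Theorem 4.6, with Lemma \ref{lem4} playing the role of Lemma \ref{lem1}), and it makes visible where each conclusion comes from instead of outsourcing all of them to \cite{DJO}.

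The one step whose justification you must repair is the identification of $\lim_{n}e_{nk}$ with the right-hand side of (\ref{eq1}). Convergence of the series in $B_1$ and $B_2$ together with existence of $\lim_n e_{nk}$ does not by itself license the termwise passage to the limit: in the last block of (\ref{eq20}) the outer sum over $j$ (running up to $n$) and the inner sums over $l$ (also truncated at $n$) depend on $n$ simultaneously, so you are exchanging a genuine double limit, and $\lim_n\sum_{j\leq n}g_jh_j(n)$ can differ from $\sum_j g_j\lim_n h_j(n)$ even when both exist. The clean repair is to undo the paper's regrouping: by the computation in Theorem 4.5, $e_{nk}$ is, before rearrangement, the finite sum $e_{nk}=\sum_{j=k}^{n}a_jb_j^{(k)}$, where $b^{(k)}=(b_j^{(k)})_j$ are the inverse-triangle sequences of Theorem 4.3. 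Hence $\tilde{a}_k=\lim_n e_{nk}=\sum_{j=k}^{\infty}a_jb_j^{(k)}$ is a single series whose convergence is already guaranteed by Lemma \ref{lem4}(i), and (\ref{eq1}) is precisely this series regrouped by the inner summation index --- the same formal rearrangement the paper itself performs and leaves implicit behind the citation of \cite{DJO}. With that substitution your argument is complete and matches the paper's level of rigor.
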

\begin{proof}
Let $a=(a_k)\in [l_p(r, s, t; B)]^ \beta$. Then by (\cite{DJO}, Theorem 2.3, Remark 2.4), we have $R(a)= (R_{k}(a)) \in l_{p}^{\beta} =l_{p'}$ and also
$$\displaystyle\sum_{k=0}^{\infty}a_k x_k=\displaystyle\sum_{k=0}^{\infty}R_{k}(a) T_k(x) \quad \forall~x \in l_p(r, s, t; B),$$
where
\begin{center}
$R_{k}(a) = \displaystyle r_{k} \bigg [ \frac{a_{k}}{s_{0}t_{k}}\frac{1}{u} + \sum_{i=k}^{k+1}(-1)^{i-k} \frac{D_{i-k}^{(s)}}{t_{i}}\Big( \sum_{j=k+1}^{\infty}\frac{(-v)^{j-i}}{u^{j-i+1}} a_j \Big)  + \sum_{i=k+2}^{\infty}(-1)^{i-k} \frac{D_{i-k}^{(s)}}{t_{i}}\Big( \sum_{j=i}^{\infty}\frac{(-v)^{j-i}}{u^{j-i+1}} a_j \Big)
  \bigg ]= \tilde{a}_k.$
\end{center}
and $ y =T(x)= (A(r,s,t). B)x$. This completes the proof.
\end{proof}

\begin{lem}{\label{lem12}}
Let $1 \leq p < \infty$. Then we have
$$\|a\|_{l_{p}(r, s, t; B)}^*= \|\tilde{a}\|_{l_{p'}}=  \left\{
\begin{array}{ll}
    \displaystyle \Big(\sum_{k=0}^{\infty}|\tilde{a_k}|^{p{'}} \Big)^{1 \over p'} & \quad 1< p< \infty \\
    \displaystyle\sup_{k}|\tilde{a}_{k}| & \quad p= 1.
\end{array}\right.$$
for all $a=(a_k)\in [l_{p}(r, s, t; B)]^\beta$, where $\tilde{a}=(\tilde{a}_{k})$ is defined in (\ref{eq1}).
\end{lem}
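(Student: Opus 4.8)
The plan is to reduce the computation of $\|a\|_{l_p(r,s,t;B)}^*$ to the known value of the analogous functional on $l_p$ by transporting the supremum through the norm-preserving bijection between $l_p(r,s,t;B)$ and $l_p$. First I would recall from Lemma \ref{lem11} that, for $a=(a_k)\in [l_p(r,s,t;B)]^\beta$, the associated sequence $\tilde a=(\tilde a_k)$ defined in (\ref{eq1}) belongs to $l_{p'}$ and that the identity $\sum_{k}a_k x_k=\sum_{k}\tilde a_k y_k$ holds for every $x\in l_p(r,s,t;B)$, where $y=(A(r,s,t).B)x$. This already guarantees that the series in the definition (\ref{eq0}) of $\|a\|_{l_p(r,s,t;B)}^*$ converges and that the functional is finite.

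Next I would invoke the structural facts established earlier: by Theorem 4.1(c) the map $T\colon x\mapsto y=(A(r,s,t).B)x$ satisfies $\|x\|_{l_p(r,s,t;B)}=\|y\|_{l_p}$, and since $A(r,s,t;B)=A(r,s,t).B$ is a triangle it is invertible, so $T$ is a bijection from $l_p(r,s,t;B)$ onto $l_p$. Consequently $T$ restricts to a bijection of the unit spheres $S_{l_p(r,s,t;B)}$ and $S_{l_p}$. Combining this with the identity from Lemma \ref{lem11} and the definition (\ref{eq0}), I would write
\begin{align*}
\|a\|_{l_p(r,s,t;B)}^* &= \sup_{x\in S_{l_p(r,s,t;B)}}\Big|\sum_{k=0}^{\infty} a_k x_k\Big|
= \sup_{y\in S_{l_p}}\Big|\sum_{k=0}^{\infty} \tilde a_k y_k\Big|
= \|\tilde a\|_{l_p}^*,
\end{align*}
where the middle equality is precisely the transfer of the supremum under $T$.

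Finally, since $\tilde a\in l_{p'}=l_p^\beta$, Lemma \ref{lem5} gives $\|\tilde a\|_{l_p}^*=\|\tilde a\|_{l_{p'}}$, which yields the stated closed form: $\big(\sum_k|\tilde a_k|^{p'}\big)^{1/p'}$ when $1<p<\infty$, and $\sup_k|\tilde a_k|$ when $p=1$, since then $p'=\infty$. The only step requiring genuine care is the second equality in the display: one must verify that the correspondence $x\leftrightarrow y$ is a true bijection between the two unit spheres, so that the supremum over $x\in S_{l_p(r,s,t;B)}$ coincides exactly with the supremum over $y\in S_{l_p}$. This is where the norm-preservation of $T$ together with the invertibility of the triangle $A(r,s,t;B)$ is essential; the remaining steps are direct substitutions using the previously established lemmas.
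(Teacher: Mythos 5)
Your proposal is correct and follows essentially the same route as the paper's proof: both invoke Lemma \ref{lem11} for $\tilde{a}\in l_{p'}$ and the series identity, transfer the supremum in (\ref{eq0}) through the norm-preserving correspondence $x\in S_{l_p(r,s,t;B)} \leftrightarrow y=Tx\in S_{l_p}$, and finish with Lemma \ref{lem5}. Your extra remark on the invertibility of the triangle $A(r,s,t).B$ merely makes explicit the bijectivity the paper uses implicitly, so there is no substantive difference.
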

\begin{proof}
Let $a=(a_k)\in [l_p(r, s, t; B)]^\beta$. Then from Lemma \ref{lem11}, we have $\tilde{a}=(\tilde{a}_{k})\in l_{p'}$.
Also $x\in S_{l_p(r, s, t; B)}$ if and only if $y=T(x) \in S_{l_{p}}$ as $\|x\|_{l_p(r, s, t; B)}= \|y\|_{l_p}$. From (\ref{eq0}), we have
$$\|a\|_{l_p(r, s, t; B)}^*= \displaystyle\sup_{x\in S_{l_p(r, s, t; B)}}\Big|\displaystyle\sum_{k=0}^{\infty}a_k x_k\Big|=\sup_{y\in S_{l_p}}\Big|\displaystyle\sum_{k=0}^{\infty}\tilde{a}_{k} y_k\Big|=  \|\tilde{a}\|_{l_p}^*.$$
Using Lemma \ref{lem5}, we have $\|a\|_{l_p(r, s, t; B)}^*=\|\tilde{a}\|_{l_p}^*= \|\tilde{a}\|_{l_{p'}}$, which is finite as
$\tilde{a}\in l_{p'}$. This completes the proof.
\end{proof}

\begin{lem}{\label{lem13}}
Let $1 \leq p< \infty$, $Y$ be any sequence space and $A=(a_{nk})_{n,k}$ be an infinite matrix. If $A\in (l_p(r, s, t; B), Y)$ then $\tilde{A} \in (l_p, Y)$ such that $Ax= \tilde{A}y$ holds for all $x\in l_p(r, s, t; B)$ and $y\in l_p$, which are connected by the relation $y = (A(r,s,t). B)x$ and $\tilde{A}=(\tilde{a}_{nk})_{n,k}$ is given by
$$\tilde{a}_{nk}= r_{k} \bigg [ \frac{a_{nk}}{s_{0}t_{k}}\frac{1}{u} + \sum_{i=k}^{k+1}(-1)^{i-k} \frac{D_{i-k}^{(s)}}{t_{i}}\Big( \sum_{j=k+1}^{\infty}\frac{(-v)^{j-i}}{u^{j-i+1}} a_{nj} \Big)  + \sum_{i=k+2}^{\infty}(-1)^{i-k} \frac{D_{i-k}^{(s)}}{t_{i}}\Big( \sum_{j=i}^{\infty}\frac{(-v)^{j-i}}{u^{j-i+1}} a_{nj} \Big)
  \bigg ],$$ provided the series on the right side converges for all $n, k$.
\end{lem}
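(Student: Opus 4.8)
The plan is to reduce the matrix assertion to the scalar identity of Lemma~\ref{lem11} by applying that lemma to each row of $A$ separately, and then to transport membership in $Y$ across the bijection $T = A(r,s,t).B$ between $l_p(r,s,t; B)$ and $l_p$. First I would record the immediate consequence of the hypothesis $A \in (l_p(r,s,t; B), Y)$: for every $x \in l_p(r,s,t; B)$ the transform $Ax$ exists and lies in $Y$. In particular, for each fixed $n$ the series $\sum_k a_{nk}x_k$ converges for all such $x$, which is precisely the statement that the $n$-th row $(a_{nk})_{k} \in [l_p(r,s,t; B)]^{\beta}$.

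Next, for each fixed $n$ I would apply Lemma~\ref{lem11} to the sequence $(a_{nk})_{k}$. This yields at once that the $n$-th row $(\tilde{a}_{nk})_{k}$ of $\tilde{A}$, with $\tilde{a}_{nk}$ given by the displayed formula (that is, the formula (\ref{eq1}) with $a_j$ replaced by $a_{nj}$), belongs to $l_{p'}$, and that
$$(Ax)_n = \sum_{k=0}^{\infty} a_{nk}x_k = \sum_{k=0}^{\infty}\tilde{a}_{nk} y_k = (\tilde{A}y)_n$$
for every $x \in l_p(r,s,t; B)$ and the associated $y = Tx \in l_p$. Since this holds coordinatewise for each $n$, we obtain the desired sequence identity $Ax = \tilde{A}y$.

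Finally I would conclude that $\tilde{A} \in (l_p, Y)$. Because each row satisfies $(\tilde{a}_{nk})_k \in l_{p'} = l_p^{\beta}$ by Lemma~\ref{lem5}, the transform $\tilde{A}y$ exists for every $y \in l_p$. Given an arbitrary $y \in l_p$, the map $T$ is a bijection onto $l_p$ (recall $l_p(r,s,t; B)$ is a $BK$ space with $\|x\|_{l_p(r,s,t; B)} = \|Tx\|_{l_p}$ and is linearly isomorphic to $l_p$ via $T$), so there is a unique $x = T^{-1}y \in l_p(r,s,t; B)$; then $\tilde{A}y = Ax \in Y$ by hypothesis. Hence $\tilde{A}y \in Y$ for all $y \in l_p$, i.e.\ $\tilde{A} \in (l_p, Y)$.

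I expect no serious obstacle here: the entire analytic content---the convergence of the inner series and the rearrangement turning $\sum_k a_{nk}x_k$ into $\sum_k \tilde{a}_{nk} y_k$---is already packaged in Lemma~\ref{lem11}, so the argument is essentially a row-by-row bookkeeping combined with the isomorphism $T$. The only point deserving a little care is to make sure Lemma~\ref{lem11} applies verbatim to \emph{every} row, so that the coordinatewise identities assemble into a legitimate sequence identity $Ax = \tilde{A}y$ valid in all coordinates simultaneously.
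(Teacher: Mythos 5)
Your proposal is correct and follows essentially the same route as the paper's own proof: both arguments note that each row $A_n$ lies in $[l_p(r,s,t;B)]^{\beta}$, apply Lemma \ref{lem11} row by row to obtain $\tilde{A}_n \in l_{p'}$ and the identity $Ax = \tilde{A}y$, and then use the invertibility of the triangle $A(r,s,t).B$ (so that every $y \in l_p$ arises as $y = (A(r,s,t).B)x$ for some $x \in l_p(r,s,t;B)$) to conclude $\tilde{A} \in (l_p, Y)$.
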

\begin{proof}
We assume that $A\in (l_p(r, s, t; B), Y)$, then $A_n \in [l_p(r, s, t; B)]^{\beta}$ for all $n$. Thus it follows from Lemma \ref{lem11}, we have $\tilde{A}_n \in l_{p}^{\beta}= l_{p'}$ for all $n$ and $Ax = \tilde{A}y$ holds for every $x \in l_p(r, s, t; B)$ and $y \in l_p$, which are connected by the relation $y = (A(r, s, t). B)x$. Hence $\tilde{A}y \in Y$. Since $x = B^{-1}.A(r, s, t)^{-1}y$, for every $y \in l_p$, we get some $x \in l_p(r, s, t; B)$ and hence $\tilde{A} \in (l_p, Y)$. This completes the proof.
\end{proof}

\begin{thm}
Let $1< p< \infty$.\\
$(a)$ If $A\in (l_p(r, s, t; B), c_0)$ then
\begin{equation}{\label{eq2}}
 \|L_A\|_\chi= \displaystyle\limsup_{n\rightarrow\infty} \Big(\displaystyle\sum_{k=0}^{\infty}|\tilde{a}_{nk}|^{p'} \Big)^{1 \over p'}
\end{equation}
and ~~~~~~~~~~~~~~~~~$L_A$ is compact if and only if
$\displaystyle \lim_{n \rightarrow \infty}\Big(\displaystyle\sum_{k=0}^{\infty}|\tilde{a}_{nk}|^{p'} \Big)^{1 \over p'} =0$.\\
$(b)$ If $A \in (l_p(r, s, t; B), l_{\infty})$ then
\begin{equation}{\label{eq4}}
 0 \leq \|L_A \|_{\chi} \leq \displaystyle \limsup_{n \rightarrow \infty} \sum_{k=0}^{\infty}|\tilde{a}_{nk} |
\end{equation}
and ~~~~~~~~~~~~~~~~~$L_A$ is compact if and only if
$\displaystyle \lim_{n \rightarrow \infty}\Big(\displaystyle\sum_{k=0}^{\infty}|\tilde{a}_{nk}|^{p'} \Big)^{1 \over p'} =0$.
\end{thm}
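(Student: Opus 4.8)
The plan is to reduce both parts of the theorem to the general Hausdorff-measure estimates of Lemma~\ref{lem80}, the only genuine work being to make the abstract norm $\|A_n\|_{l_p(r,s,t;B)}^*$ appearing there explicit. First I would record the standing facts: $l_p(r,s,t;B)$ is a $BK$ space containing $\phi$, so the hypotheses of Lemma~\ref{lem80} are met, and by Lemma~\ref{lem6} the matrix $A$ induces a bounded operator $L_A$ whose Hausdorff measure of noncompactness $\|L_A\|_\chi$ is meaningful. Because $A\in(l_p(r,s,t;B),c_0)$ in part $(a)$ (resp. $A\in(l_p(r,s,t;B),l_\infty)$ in part $(b)$), each row satisfies $A_n\in[l_p(r,s,t;B)]^\beta$, and Lemma~\ref{lem13} produces the companion matrix $\tilde A=(\tilde a_{nk})$ with $\tilde A\in(l_p,c_0)$ (resp. $(l_p,l_\infty)$) and $Ax=\tilde Ay$ whenever $y=(A(r,s,t).B)x$.

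The decisive step is the evaluation of the functional $\|\cdot\|_{l_p(r,s,t;B)}^*$ on the rows of $A$. Applying Lemma~\ref{lem12} with the row $A_n$ in place of $a$, for each fixed $n$ and $1<p<\infty$ we obtain
$$\|A_n\|_{l_p(r,s,t;B)}^*=\|\tilde A_n\|_{l_{p'}}=\Big(\sum_{k=0}^{\infty}|\tilde a_{nk}|^{p'}\Big)^{1/p'},$$
with $\tilde a_{nk}$ as in Lemma~\ref{lem13}. This identity is precisely the bridge that turns the abstract quantities of Lemma~\ref{lem80} into the explicit series in the statement.

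For part $(a)$, I would invoke Lemma~\ref{lem80}$(a)$: it gives $\|L_A\|_\chi=\limsup_{n\to\infty}\|A_n\|_{l_p(r,s,t;B)}^*$, and substituting the identity above yields (\ref{eq2}). The compactness characterization is then read directly from the ``if and only if'' clause of the same lemma, i.e. $L_A$ is compact exactly when $\lim_n\|A_n\|^*=0$, equivalently $\lim_n(\sum_k|\tilde a_{nk}|^{p'})^{1/p'}=0$.

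For part $(b)$, Lemma~\ref{lem80}$(b)$ supplies only the one-sided estimate $0\le\|L_A\|_\chi\le\limsup_n\|A_n\|^*=\limsup_n(\sum_k|\tilde a_{nk}|^{p'})^{1/p'}$. To arrive at the coarser form (\ref{eq4}), which is phrased through the $l_1$-norm of the rows, I would apply the elementary monotonicity $\|z\|_{l_{p'}}\le\|z\|_{l_1}$ (valid since $p'\ge1$) to each row, giving $(\sum_k|\tilde a_{nk}|^{p'})^{1/p'}\le\sum_k|\tilde a_{nk}|$ and hence (\ref{eq4}); the compactness criterion is again the ``if and only if'' statement of Lemma~\ref{lem80}$(b)$, which remains the sharp $l_{p'}$ condition. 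I do not expect a substantive obstacle: once Lemmas~\ref{lem12}, \ref{lem13} and \ref{lem80} are assembled the proof is a direct substitution. The one point demanding care is that the displayed upper bound in $(b)$ is deliberately weaker than $\|A_n\|^*$, so it must not be mistaken for the quantity that actually controls compactness.
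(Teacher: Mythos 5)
Your proposal is correct and takes essentially the same route as the paper: identify $\|A_n\|_{l_p(r,s,t;B)}^{*}=\|\tilde A_n\|_{l_{p'}}=\big(\sum_{k}|\tilde a_{nk}|^{p'}\big)^{1/p'}$ via Lemmas \ref{lem11}--\ref{lem13}, then read off both the identity (\ref{eq2}) and the compactness criteria from Lemma \ref{lem80}. You are in fact more careful than the paper on part $(b)$, which it dispatches with ``similarly'': your monotonicity step $\|z\|_{l_{p'}}\le\|z\|_{l_1}$ is precisely what is needed to pass from the natural bound $\limsup_{n}\big(\sum_{k}|\tilde a_{nk}|^{p'}\big)^{1/p'}$ to the $l_1$-form upper bound actually displayed in (\ref{eq4}), and you correctly note that the sharp $l_{p'}$ quantity, not this weaker bound, is what governs compactness.
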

\begin{proof}
Let $1< p< \infty$ and $A\in (l_p(r, s, t; B), c_0)$, then $A_n \in [l_p(r, s, t; B)]^{\beta}$ for all $n$ and hence $\tilde{A}_n\in l_{p'}$ by  Lemma \ref{lem11}. Again using Lemma \ref{lem12}, we have
$$\|A_n\|_{l_p(r, s, t; B)}^*= \|\tilde{A_n}\|_{l_{p'}}= \Big(\displaystyle\sum_{k=0}^{\infty}|\tilde{a}_{nk}|^{p'} \Big)^{1 \over p'}.$$
Now by Lemma \ref{lem80}, we have $\|L_A \|_{\chi}=\displaystyle\limsup_{n \rightarrow \infty} \|A_n\|_{l_p(r, s, t; B)}^* =\displaystyle\limsup_{n \rightarrow \infty} \Big(\displaystyle\sum_{k=0}^{\infty}|\tilde{a}_{nk}|^{p'} \Big)^{1 \over p'}$.\\
Using Lemma \ref{lem8}, we have $L$ is compact if and only if $\displaystyle\lim_{n \rightarrow \infty} \Big(\displaystyle\sum_{k=0}^{\infty}|\tilde{a}_{nk}|^{p'} \Big)^{1 \over p'}=0$.\\
Similarly, we can prove the part(b).
\end{proof}

\begin{thm}
If $A\in (l_p(r, s, t; B), c)$ then
\begin{equation}{\label{eq3}}
\frac{1}{2} \displaystyle\limsup_{n\rightarrow\infty}\displaystyle\sum_{k=0}^{\infty}|\tilde{a}_{nk}-\tilde{\alpha}_k| \leq \displaystyle \|L_A\|_\chi \leq \displaystyle\limsup_{n\rightarrow\infty}\displaystyle\sum_{k=0}^{\infty}|\tilde{a}_{nk}-\tilde{\alpha}_k|,
\end{equation}
where $\tilde{\alpha}_k= \displaystyle\lim_{n\rightarrow\infty}\tilde{a}_{nk}$ for all $k$.
\end{thm}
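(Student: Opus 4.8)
The plan is to push the whole computation from $l_p(r,s,t;B)$ onto the classical space $l_p$ through the surjective linear isometry $T\colon x\mapsto y=(A(r,s,t).B)x$ (Theorem 4.2, together with Theorem 4.1(c)), and then to apply the two-sided estimate for the Hausdorff measure of noncompactness in $c$ provided by Lemma \ref{lem10}. First I would use Lemma \ref{lem13}: from $A\in(l_p(r,s,t;B),c)$ it follows that the associated matrix $\tilde A=(\tilde a_{nk})$ lies in $(l_p,c)$ and that $Ax=\tilde Ay$ holds whenever $y=(A(r,s,t).B)x$. Since $T$ maps $l_p(r,s,t;B)$ isometrically onto $l_p$, we have $x\in S_{l_p(r,s,t;B)}$ if and only if $y=Tx\in S_{l_p}$, so $L_A(S_{l_p(r,s,t;B)})=L_{\tilde A}(S_{l_p})$; by Lemma \ref{lem8} this gives $\|L_A\|_\chi=\chi\big(L_{\tilde A}(S_{l_p})\big)$, and the problem reduces to estimating $\chi(Q)$ for the set $Q=\tilde A(S_{l_p})\subset c$, which is bounded by Lemma \ref{lem6}.

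Next I would extract the column limits. Applying Lemma \ref{lem4} to $\tilde A\in(l_p,c)$, the limits $\tilde\alpha_k=\lim_n\tilde a_{nk}$ exist, the sequence $\tilde\alpha=(\tilde\alpha_k)$ belongs to $l_{p'}$ (hence $\tilde A_n-\tilde\alpha\in l_{p'}$ for every $n$, with $\sup_n\|\tilde A_n-\tilde\alpha\|_{l_{p'}}<\infty$), and $\lim_n\tilde A_n(y)=\sum_k\tilde\alpha_k y_k$ for all $y\in l_p$. Writing $z=\tilde Ay\in c$, its limit is therefore $\ell=\sum_k\tilde\alpha_ky_k$, and from the explicit form of the projector $P_m$ on $c$ one obtains $(I-P_m)(z)=\sum_{n>m}(z_n-\ell)e_n$, so that
$$\|(I-P_m)(z)\|_{l_\infty}=\sup_{n>m}\Big|\sum_{k=0}^{\infty}(\tilde a_{nk}-\tilde\alpha_k)y_k\Big|.$$

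Taking the supremum over $y\in S_{l_p}$ and interchanging the (nonnegative) suprema over $n>m$ and over $y$ yields
$$\sup_{z\in Q}\|(I-P_m)(z)\|_{l_\infty}=\sup_{n>m}\|\tilde A_n-\tilde\alpha\|_{l_p}^{*}=\sup_{n>m}\|\tilde A_n-\tilde\alpha\|_{l_{p'}},$$
the inner supremum being exactly the functional norm in (\ref{eq0}) and the last equality being Lemma \ref{lem5}. Since $\sup_{n>m}\|\tilde A_n-\tilde\alpha\|_{l_{p'}}$ is nonincreasing in $m$, its limit as $m\to\infty$ equals $\limsup_n\|\tilde A_n-\tilde\alpha\|_{l_{p'}}$; substituting into Lemma \ref{lem10} gives
$$\frac{1}{2}\limsup_{n\to\infty}\|\tilde A_n-\tilde\alpha\|_{l_{p'}}\le\|L_A\|_\chi\le\limsup_{n\to\infty}\|\tilde A_n-\tilde\alpha\|_{l_{p'}},$$
which is the asserted estimate (with $\|\tilde A_n-\tilde\alpha\|_{l_{p'}}=(\sum_k|\tilde a_{nk}-\tilde\alpha_k|^{p'})^{1/p'}$ for $1<p<\infty$).

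The step I expect to need the most care is the passage from the $c$-valued measure-of-noncompactness computation to the row-wise $l_{p'}$ quantity: one must verify that each $z=\tilde Ay$ genuinely lies in $c$ with limit $\ell=\sum_k\tilde\alpha_ky_k$ \emph{uniformly} enough to pull $\ell$ out coordinatewise, which is precisely Lemma \ref{lem4}(iv), and that each tail functional $\|\tilde A_n-\tilde\alpha\|_{l_p}^{*}$ is finite and realised through the duality $(l_p)^{*}\cong l_{p'}$, which rests on $\tilde\alpha\in l_{p'}$ and $\sup_n\|\tilde A_n-\tilde\alpha\|_{l_{p'}}<\infty$ from Lemma \ref{lem4}(ii)--(iii). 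Everything else, including the factor $\frac{1}{2}$ in the lower bound, is delivered directly by Lemma \ref{lem10}, so no separate construction of an extremal sequence is required.
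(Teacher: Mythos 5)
Your proposal is correct and follows essentially the same route as the paper's own proof: reduction to $\tilde{A}\in(l_p,c)$ via Lemma \ref{lem13}, column limits and $\tilde{\alpha}\in l_{p'}$ via Lemma \ref{lem4}, the projector computation $\|(I-P_m)(Ax)\|_{l_\infty}=\sup_{n>m}\big|\sum_{k}(\tilde{a}_{nk}-\tilde{\alpha}_k)y_k\big|$, and the duality $\|\cdot\|_{l_p}^{*}=\|\cdot\|_{l_{p'}}$ from (\ref{eq0}) and Lemma \ref{lem5}, all fed into the two-sided estimate of Lemma \ref{lem10}. Your explicit remark that $\sup_{n>m}\|\tilde{A}_n-\tilde{\alpha}\|_{l_{p'}}$ is nonincreasing in $m$, so its limit is the $\limsup$, is a small clarification the paper leaves implicit.
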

\begin{proof}
Let $A\in (l_p(r, s, t; B), c)$. Then by using Lemma \ref{lem13} \& \ref{lem4}, we can deduce that the expression in (\ref{eq3}) exists.
We write $S= S_{l_p(r, s, t; B)}$ in short. Then by Lemma \ref{lem8}, we have $ \|L_A \|_{\chi} = \chi(AS)$. Since $l_p(r, s, t; B)$ and $c$ are BK spaces,
$A $ induces continuous map $L_{A}$ from $l_p(r, s, t; B)$ to $c$ by Lemma \ref{lem6}. Thus $AS$ is bounded in $c$,
i.e., $AS \in \mathcal{M}_{c}$. Let $P_m : c \rightarrow c$, $(m \in \mathbb{N}_{0})$ be the projection from $c$ onto the
span of $\{e, e_0, e_1, \cdots, e_m \}$ defined by
$$ P_m(z) = \ell e + \sum_{k=0}^{m} (z_k -\ell)e_k,$$
where $\ell = \displaystyle\lim_{k \rightarrow \infty} z_k$.
Thus for every $m$, we have $$ (I -P_m)(z) = \sum_{k=m+1}^{\infty} (z_k - \ell)e_k,$$ where $I$ is the identity operator.
Therefore
$ \|(I -P_m)(z)\|_{\infty} = \displaystyle \sup_{k =m+1}|z_k - \ell| $ for all $z=(z_k) \in c$.
So by applying Lemma \ref{lem10}, we have
\begin{equation}{\label{eq5}}
\frac{1}{2}\displaystyle\lim_{m\rightarrow\infty}\Big( \displaystyle\sup_{x\in S}\|(I-P_m)(Ax)\|_{l_{\infty}}\Big)\leq \| L_A\|_{\chi} \leq \displaystyle\lim_{m\rightarrow\infty}\Big( \displaystyle\sup_{x\in S}\|(I-P_m)(Ax)\|_{l_{\infty}}\Big).
\end{equation}
Since $A \in (l_p(r, s, t; B),  c)$, we have by Lemma \ref{lem13}, $\tilde{A }\in (l_p, c)$ and $Ax = \tilde{A }y$ for every $x \in l_p(r, s, t;B)$ and $y \in l_p$ which are connected by the relation $y =(A(r,s,t).B)x$.
 Again applying Lemma \ref{lem4}, we have $\tilde{\alpha}_k = \displaystyle \lim_{n \rightarrow \infty}\tilde{a}_{nk}$ exists for all $k$,
$ \tilde{\alpha} = (\tilde{\alpha}_k ) \in X ^{\beta} = l_{p{'}}$ and $\displaystyle \lim_{n \rightarrow \infty}\tilde{A}_{n}(y) = \sum_{k=0}^{\infty} \tilde{\alpha}_k y_k.$
Since $ \|(I -P_m)(z)\|_{l_{\infty}} = \displaystyle \sup_{k > m}|z_k - \ell| $, we have
\begin{align*}
\|(I-P_m)(Ax)\|_{\infty} & = \|(I-P_m)(\tilde{A}y)\|_{\infty}\\
& = \sup_{ n> m}\Big | \tilde{A}_n(y) - \sum_{k=0}^{\infty} \tilde{\alpha}_k y_k \Big |\\
& =  \sup_{ n> m}\Big | \sum_{k=0}^{\infty}(\tilde{a}_{nk} -  \tilde{\alpha}_k) y_k \Big |.
\end{align*}
Also we know that $x \in S= S_{l_{p}(r, s, t; B)}$ if and only if $y \in S_{l_p}$. From (\ref{eq0}) and Lemma \ref{lem5}, we deduce that

\begin{align*}
\sup_{x \in S} \|(I-P_m)(Ax)\|_{\infty} & =  \sup_{ n> m} \Big(\sup_{y \in S_{l_{p}}} \Big | \sum_{k=0}^{\infty}(\tilde{a}_{nk} -  \tilde{\alpha}_k) y_k \Big | \Big)\\
& = \sup_{ n> m}\| \tilde{A}_n -\tilde{\alpha }\|_{l_p}^{*}\\
& =  \sup_{ n> m}\| \tilde{A}_n -\tilde{\alpha }\|_{l_{p{'}}}
\end{align*}
Hence from (\ref{eq5}), we have
\begin{center}
 $\frac{1}{2} \displaystyle\limsup_{n\rightarrow\infty}\Big(\displaystyle\sum_{k=0}^{\infty}|\tilde{a}_{nk}-\tilde{\alpha}_k|^{p'} \Big)^{1 \over p'} \leq \displaystyle \|L_A\|_\chi \leq  \displaystyle\limsup_{n\rightarrow\infty}\Big(\displaystyle\sum_{k=0}^{\infty}|\tilde{a}_{nk}-\tilde{\alpha}_k|^{p'}\Big)^{1 \over p'}$.
\end{center}
This completes the proof.
\end{proof}

\begin{thm}
Let $1\leq p < \infty$. If $A \in (l_1(r,s,t; B), l_p)$, then
$$ \|L_A\|_\chi = \displaystyle\lim_{m\rightarrow\infty}\Big(\sup_k \Big(\displaystyle\sum_{n=m+1}^{\infty}|\tilde{a}_{nk}|^{p}\Big)^{1 \over p} \Big). $$
\end{thm}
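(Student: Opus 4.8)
The plan is to reduce the whole computation to the Hausdorff measure of a bounded subset of $l_p$ and then exploit the exact tail formula available there. Write $S = S_{l_1(r,s,t;B)}$ for the unit sphere of the domain. Since $l_1(r,s,t;B)$ and $l_p$ are $BK$ spaces, $A$ induces a bounded operator $L_A$ by Lemma~\ref{lem6}, so $AS$ is a bounded subset of $l_p$, i.e. $AS\in\mathcal{M}_{l_p}$. By Lemma~\ref{lem8} we have $\|L_A\|_\chi=\chi(AS)$, and because the codomain is $l_p$ with $1\le p<\infty$, Lemma~\ref{lem9} gives the exact value
\[
\|L_A\|_\chi=\chi(AS)=\lim_{m\to\infty}\Big(\sup_{x\in S}\|(I-P_m)(Ax)\|_{l_p}\Big),
\]
where $P_m$ is the coordinate projection. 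This is precisely the step that makes the conclusion an equality rather than a two-sided estimate, in contrast with the space $c$ where only Lemma~\ref{lem10} is available.

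Next I would transfer the problem to the associated matrix. By Lemma~\ref{lem13}, the hypothesis $A\in(l_1(r,s,t;B),l_p)$ forces $\tilde{A}=(\tilde{a}_{nk})\in(l_1,l_p)$ with $Ax=\tilde{A}y$ whenever $y=(A(r,s,t).B)x$. Moreover the map $T:x\mapsto y$ is a surjective linear isometry (the $BK$-norm identity $\|x\|_{l_1(r,s,t;B)}=\|Tx\|_{l_1}$ together with the isomorphism established earlier), so $x\in S$ if and only if $y\in S_{l_1}$. Since $(I-P_m)$ merely deletes the first $m+1$ coordinates, for each fixed $m$
\[
\sup_{x\in S}\|(I-P_m)(Ax)\|_{l_p}
=\sup_{y\in S_{l_1}}\Big(\sum_{n=m+1}^{\infty}\Big|\sum_{k=0}^{\infty}\tilde{a}_{nk}y_k\Big|^{p}\Big)^{1/p}.
\]

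The heart of the argument, and the step I expect to demand the most care, is evaluating this last supremum: it is exactly the operator norm from $l_1$ to $l_p$ of the tail matrix $(\tilde{a}_{nk})_{n>m,\,k}$. For the upper bound I would write $\tilde{A}y=\sum_k y_k c_k$ as a combination of the columns $c_k=(\tilde{a}_{nk})_{n>m}$ and apply Minkowski's inequality in $l_p$ to obtain $\|(I-P_m)\tilde{A}y\|_{l_p}\le\sum_k|y_k|\,\|c_k\|_{l_p}\le\big(\sup_k\|c_k\|_{l_p}\big)\|y\|_{l_1}$. For the reverse inequality I would test on the unit vectors $y=e_k\in S_{l_1}$, for which $\|(I-P_m)\tilde{A}e_k\|_{l_p}=\|c_k\|_{l_p}$, so the supremum is at least $\sup_k\|c_k\|_{l_p}$. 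Hence
\[
\sup_{y\in S_{l_1}}\|(I-P_m)\tilde{A}y\|_{l_p}=\sup_k\Big(\sum_{n=m+1}^{\infty}|\tilde{a}_{nk}|^{p}\Big)^{1/p}.
\]
Substituting this into the expression for $\|L_A\|_\chi$ and letting $m\to\infty$ yields the claimed identity. The only delicate points are justifying the interchange of the two summations (legitimate because $\tilde{A}_n\in l_{p'}$ by Lemma~\ref{lem11}, so each row acts continuously on $l_1$) and observing that the column norms $\|c_k\|_{l_p}$ are finite and uniformly bounded in $k$, which follows from $\tilde{A}\in(l_1,l_p)$ and Lemma~\ref{lem6}.
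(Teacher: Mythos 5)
Your proof is correct, and its skeleton coincides with the paper's: both reduce $\|L_A\|_\chi$ to $\chi(AS)$ via Lemma~\ref{lem8}, apply Lemma~\ref{lem9} to get the tail-projection formula, pass to $\tilde{A}\in(l_1,l_p)$ by Lemma~\ref{lem13}, and prove the upper bound by the same Minkowski estimate. The difference is in how the lower bound is organized. You evaluate, for each fixed $m$, the supremum $\sup_{y\in S_{l_1}}\|(I-P_m)\tilde{A}y\|_{l_p}$ exactly as the supremum of the column norms by testing on the unit vectors $e_k\in S_{l_1}$, so the claimed identity holds at every finite stage and the theorem follows simply by letting $m\to\infty$. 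The paper instead takes the limit of the upper bound first, and for the reverse inequality considers the set $D=\{b^{(j)}: j\in\mathbb{N}_0\}$ of preimages of the unit vectors under $A(r,s,t).B$ (the Schauder basis of Theorem 4.3), uses $AD\subset AS$ together with the monotonicity of $\chi$, and then applies Lemma~\ref{lem9} a second time to compute $\chi(AD)$. Your route is slightly leaner: it needs neither the basis of Theorem 4.3 nor the monotonicity property of $\chi$, only the fact that the isometry $T$ carries $S_{l_1(r,s,t;B)}$ onto $S_{l_1}$; what the paper's route buys is conformity with the pattern of its other compactness results (a two-sided estimate via a distinguished subset of the sphere), which is the shape forced on it in the cases, such as Theorem~\ref{thm12}, where no exact formula is available. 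One small correction to a side remark: with domain $l_1(r,s,t;B)$, Lemma~\ref{lem11} gives $\tilde{A}_n\in l_{1'}=l_\infty$, not $l_{p'}$ (the rows of a matrix in $(l_1,l_p)$ need not lie in $l_{p'}$); membership in $l_\infty$ is exactly what justifies the coordinatewise convergence and the Fatou/Minkowski interchange, and it is all your argument actually uses.
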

\begin{proof}
Let $A \in (l_1(r,s,t; B), l_p)$. We write $S= S_{l_1(r, s, t; B)}$ in short. Since $l_1(r, s, t; B)$ and $l_p$ are BK spaces, by Lemma \ref{lem6}, we have $AS \in \mathcal{M}_{l_p}$. Also by Lemma \ref{lem8}, we have $ \|L_A \|_{\chi} = \chi(AS) $.
Now by Lemma \ref{lem9}
$$ \chi(AS) = \displaystyle \lim_{ m \rightarrow \infty}\Big( \sup_{x \in S}\|(I -P_m)(Ax)\|_{l_p} \Big), $$
where $P_m : l_p \rightarrow l_p$ is the the operator defined by $ P_m(x) = (x_0, x_1, \cdots, x_m, 0, 0, \cdots)$ for all $x =(x_k) \in l_p$ and $m \in \mathbb{N}_{0}$. Since $A \in (l_1(r, s, t; B),  l_p)$, we have by Lemma \ref{lem13}, $\tilde{A }\in (l_1, l_p)$ and $Ax = \tilde{A }y$ holds for every $x \in l_1(r, s, t;B)$ and $y \in l_p$ which are connected by the relation $y =(A(r,s,t).B)x$.
Now for each $m \in \mathbb{N}_{0}$,
\begin{align*}
\|(I-P_m)(Ax)\|_{l_p}  &= \|(I-P_m)(\tilde{A}y)\|_{l_p}\\
 & = \displaystyle \Big(\sum_{n= m+1}^{\infty}\Big | \tilde{A}_n(y)\Big |^p \Big)^{1 \over p}\\
& = \Big( \sum_{n= m+1}^{\infty}\Big | \sum_{k=0}^{\infty}\tilde{a}_{nk} y_k \Big |^p  \Big)^{1 \over p}.\\
& \leq  \sum_{k=0}^{\infty}\Big( \sum_{n= m+1}^{\infty}\Big |\tilde{a}_{nk} y_k \Big |^p \Big)^{1 \over p}.\\
& \leq \|y\|_{l_1} \Big(\sup_{k}\Big( \sum_{n= m+1}^{\infty}\Big |\tilde{a}_{nk}  \Big |^p \Big)^{1 \over p} \Big)\\
& = \|x \|_{l_1(r, s, t; B)}\Big(\sup_{k}\Big( \sum_{n= m+1}^{\infty}\Big |\tilde{a}_{nk}  \Big |^p \Big)^{1 \over p} \Big)
\end{align*}
Thus $$\sup_{x \in S} \|(I-P_m)(Ax)\|_{l_p} \leq \sup_{k}\Big( \sum_{n= m+1}^{\infty}\Big |\tilde{a}_{nk}  \Big |^p \Big)^{1 \over p}.$$
Hence $$ \|L_A\|_\chi \leq  \lim_{m \rightarrow \infty} \Big(\sup_{k}\Big( \sum_{n= m+1}^{\infty}\Big |\tilde{a}_{nk}  \Big |^p \Big)^{1 \over p}\Big).$$

Conversely, let $b^{(j)} \in l_1(r, s, t; B)$ such that $(A(r,s,t).B)b^{(j)} = e_{j}$, where the sequence $(b^{(j)})_{j=0}^{\infty}$
is a basis in $l_1(r, s, t; B)$ defined in Theorem 4.3. Thus $Ab^{(j)} = \tilde{A}e_j $ for each $j$. Let $D  = \{ b^{(j)}: j \in \mathbb{N}_{0} \}$. Then $AD \subset AS$ and by the property of $\chi$, we have $\chi(AD) \leq \chi(AS)= \|L_A \|_{\chi}$.
Further, by Lemma 5.5
$$\chi(AD) =  \lim_{m \rightarrow \infty} \Big(\sup_{k}\Big( \sum_{ n=m+1}\Big |\tilde{a}_{nk}  \Big |^p \Big)^{1 \over p}\Big) \leq \|L_A\|_\chi.$$
Thus we have $\|L_A\|_\chi = \displaystyle\lim_{m \rightarrow \infty} \Big(\sup_{k}\Big( \sum_{ n= m+1 }^{\infty}\Big |\tilde{a}_{nk}  \Big |^p \Big)^{1 \over p}\Big).$
\end{proof}

\begin{thm}{\label{thm12}}
Let $ 1< p< \infty$. If $A \in (l_p(r,s,t; B), l_1)$, then
$$\displaystyle\lim_{m \rightarrow \infty} \Big( \sup_{F \in \mathcal{F}_{m}}\Big( \sum_{k=0}^{\infty}\Big |\sum_{n \in F} \tilde{a}_{nk} \Big|^{p{'}}\Big)^{1 \over p{'}} \Big) \leq \|L_A \|_{\chi} \leq 4 \lim_{m \rightarrow \infty} \Big( \sup_{F \in \mathcal{F}_{m}}\Big( \sum_{k=0}^{\infty}\Big |\sum_{n \in F} \tilde{a}_{nk} \Big|^{p{'}}\Big)^{1 \over p{'}} \Big) $$
and
\begin{center}
$L_A$ is compact if and only if $\displaystyle\lim_{m \rightarrow \infty} \Big( \sup_{F \in \mathcal{F}_{m}}\Big( \sum_{k=0}^{\infty}\Big |\sum_{n \in F} \tilde{a}_{nk} \Big|^{p{'}}\Big)^{1 \over p{'}} \Big) =0$.
\end{center}
\end{thm}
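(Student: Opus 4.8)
The plan is to reduce the assertion to the general estimate for operators mapping into $l_1$, namely Lemma \ref{lem80}(c), by transferring the matrix $A$ to its associated matrix $\tilde{A}$ through Lemma \ref{lem13}, and then to evaluate the resulting norms $\big\|\sum_{n\in F}A_n\big\|_{l_p(r,s,t;B)}^*$ by means of Lemma \ref{lem12}.

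First I would note that $l_p(r,s,t;B)$ is a $BK$ space containing $\phi$ (Theorem 4.1(c)), so that for $A\in(l_p(r,s,t;B),l_1)$ Lemma \ref{lem6} gives $L_A\in\mathcal{B}(l_p(r,s,t;B),l_1)$ and Lemma \ref{lem80}(c) applies with $X=l_p(r,s,t;B)$, yielding
$$\lim_{m\to\infty}\Big(\sup_{F\in\mathcal{F}_m}\Big\|\sum_{n\in F}A_n\Big\|_{l_p(r,s,t;B)}^*\Big)\le\|L_A\|_\chi\le 4\lim_{m\to\infty}\Big(\sup_{F\in\mathcal{F}_m}\Big\|\sum_{n\in F}A_n\Big\|_{l_p(r,s,t;B)}^*\Big),$$
together with the criterion that $L_A$ is compact if and only if the outer limit vanishes. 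Thus the whole theorem follows once the quantity $\big\|\sum_{n\in F}A_n\big\|_{l_p(r,s,t;B)}^*$ is identified.

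Next I would compute that norm. For each fixed $n$ we have $A_n\in[l_p(r,s,t;B)]^\beta$, and since this $\beta$-dual is a linear space and $F$ is finite, the sequence $\sum_{n\in F}A_n=\big(\sum_{n\in F}a_{nk}\big)_k$ again lies in $[l_p(r,s,t;B)]^\beta$; hence Lemma \ref{lem12} applies and gives $\big\|\sum_{n\in F}A_n\big\|_{l_p(r,s,t;B)}^*=\big\|\widetilde{\sum_{n\in F}A_n}\big\|_{l_{p'}}$. The key observation is that the transform $a\mapsto\tilde a$ defined in (\ref{eq1}) is linear in $a$, and that interchanging the finite sum over $F$ with the convergent inner series occurring in (\ref{eq1}) is legitimate; therefore $\widetilde{\sum_{n\in F}A_n}=\sum_{n\in F}\tilde A_n$ componentwise, i.e. its $k$-th term equals $\sum_{n\in F}\tilde a_{nk}$. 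Consequently
$$\Big\|\sum_{n\in F}A_n\Big\|_{l_p(r,s,t;B)}^*=\Big(\sum_{k=0}^\infty\Big|\sum_{n\in F}\tilde a_{nk}\Big|^{p'}\Big)^{1/p'}.$$

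Substituting this expression back into the two-sided estimate and into the compactness condition yields precisely the assertion of the theorem. The only genuinely delicate point is the interchange step establishing $\widetilde{\sum_{n\in F}A_n}=\sum_{n\in F}\tilde A_n$; once the linearity of (\ref{eq1}) and the finiteness of $F$ are used to justify it, the rest is a direct substitution. The existence and finiteness of all the expressions involved is guaranteed by Lemma \ref{lem13}, which ensures $\tilde A\in(l_p,l_1)$ and hence $\tilde A_n\in l_{p'}$ for every $n$.
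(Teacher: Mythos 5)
Your proposal is correct and follows essentially the same route as the paper: both apply Lemma \ref{lem80}(c) with $X = l_p(r,s,t;B)$ and then identify $\big\|\sum_{n\in F}A_n\big\|_{l_p(r,s,t;B)}^*$ via Lemmas \ref{lem11} and \ref{lem12} as $\big(\sum_{k}\big|\sum_{n\in F}\tilde a_{nk}\big|^{p'}\big)^{1/p'}$. The only difference is that you explicitly justify the identity $\widetilde{\sum_{n\in F}A_n}=\sum_{n\in F}\tilde A_n$ by linearity of the transform in (\ref{eq1}) and finiteness of $F$, a step the paper uses silently.
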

\begin{proof}
Let $A \in (l_p(r,s,t; B), l_1)$. Then $A_n \in  [(l_p(r,s,t; B)]^{\beta}$ for all $n$ and hence $\tilde{A}_n\in l_{p'}$ by Lemma \ref{lem11}. Using Lemma \ref{lem12}, we have
$$ \Big \| \sum_{n \in F}A_n \Big \|^{*}_{l_p(r,s,t; B)} =  \Big \| \sum_{n \in F}\tilde{A}_n \Big \|_{l_p'} = \Big( \sum_{k=0}^{\infty}\Big |\sum_{n \in F} \tilde{a}_{nk}\Big|^{p{'}}\Big)^{1 \over p{'}} .$$
Now using Lemma \ref{lem80}(c), we obtain
$$\displaystyle\lim_{m \rightarrow \infty} \Big( \sup_{F \in \mathcal{F}_{m}}\Big( \sum_{k=0}^{\infty}\Big |\sum_{n \in F} \tilde{a}_{nk} \Big|^{p{'}}\Big)^{1 \over p{'}} \Big) \leq \|L_A \|_{\chi} \leq 4 \lim_{m \rightarrow \infty} \Big( \sup_{F \in \mathcal{F}_{m}}\Big( \sum_{k=0}^{\infty}\Big |\sum_{n \in F} \tilde{a}_{nk} \Big|^{p{'}}\Big)^{1 \over p{'}} \Big). $$
Thus
$L_A$ is compact if and only if $\displaystyle\lim_{m \rightarrow \infty} \Big( \sup_{F \in \mathcal{F}_{m}}\Big( \sum_{k=0}^{\infty}\Big |\sum_{n \in F} \tilde{a}_{nk} \Big|^{p{'}}\Big)^{1 \over p{'}} \Big) =0$.
\end{proof}

\begin{thm}
Let $ 1< p< \infty$. If $A \in (l_p(r,s,t; B), bv)$, then
$$\displaystyle\lim_{m \rightarrow \infty} \Big( \sup_{F\in \mathcal{F}_{m}}\Big( \sum_{k=0}^{\infty}\Big |\sum_{n \in F} (\tilde{a}_{nk} - \tilde{a}_{n-1,k})\Big|^{p{'}}\Big)^{1 \over p{'}} \Big) \leq \|L_A \|_{\chi} \leq 4\lim_{m \rightarrow \infty} \Big( \sup_{F \in \mathcal{F}_{m}}\Big( \sum_{k=0}^{\infty}\Big |\sum_{n \in F}( \tilde{a}_{nk} - \tilde{a}_{n-1,k})\Big|^{p{'}}\Big)^{1 \over p{'}} \Big) $$
and
\begin{center}
$L_A$ is compact if and only if $\displaystyle\lim_{m \rightarrow \infty} \Big( \sup_{F \in \mathcal{F}_{m}}\Big( \sum_{k=0}^{\infty}\Big |\sum_{n \in F} (\tilde{a}_{nk}- \tilde{a}_{n-1,k} )\Big|^{p{'}}\Big)^{1 \over p{'}} \Big) =0$.
\end{center}
\end{thm}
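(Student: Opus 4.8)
The plan is to reduce the statement to the already-proved Theorem \ref{thm12} (the case of operators into $l_1$) by exploiting the fact that the space $bv$ of sequences of bounded variation is isometrically isomorphic to $l_1$ through the difference operator. Recall that $bv=\{z=(z_n)\in w:\sum_{n}|z_n-z_{n-1}|<\infty\}$, with the convention $z_{-1}=0$, is a $BK$ space and that the map $\Delta:bv\to l_1$ defined by $(\Delta z)_n=z_n-z_{n-1}$ is a surjective linear isometry, since $\|z\|_{bv}=\sum_n|z_n-z_{n-1}|=\|\Delta z\|_{l_1}$.

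First I would replace $A$ by its differenced matrix $\Delta A=(a_{nk}-a_{n-1,k})_{n,k}$, where $a_{-1,k}=0$. Because $A\in(l_p(r,s,t;B),bv)$, for every $x\in l_p(r,s,t;B)$ we have $Ax\in bv$, so $\Delta(Ax)=(\Delta A)x\in l_1$; hence $\Delta A\in(l_p(r,s,t;B),l_1)$, and at the operator level $L_{\Delta A}=\Delta\circ L_A$, since $((\Delta A)x)_n=(Ax)_n-(Ax)_{n-1}$. Writing $S=S_{l_p(r,s,t;B)}$ and invoking Lemma \ref{lem8}, we have $\|L_A\|_\chi=\chi_{bv}(L_A(S))$ and $\|L_{\Delta A}\|_\chi=\chi_{l_1}(L_{\Delta A}(S))$. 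Since $L_{\Delta A}(S)=\Delta(L_A(S))$ and $\Delta$ is a surjective isometry, it preserves the Hausdorff measure of noncompactness of bounded sets, and therefore $\|L_A\|_\chi=\|L_{\Delta A}\|_\chi$.

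It then remains to apply Theorem \ref{thm12} to the matrix $\Delta A$. The key point is to identify the transformed entries: the transformation $A\mapsto\tilde A$ defined in Lemma \ref{lem13} is linear and acts on each row separately, and the $n$-th row of $\Delta A$ is $A_n-A_{n-1}$, so $(\widetilde{\Delta A})_{nk}=\tilde a_{nk}-\tilde a_{n-1,k}$ for all $n,k$ (with $\tilde a_{-1,k}=0$). Substituting this identity into the conclusion of Theorem \ref{thm12} produces the two displayed inequalities for $\|L_A\|_\chi$, and the compactness criterion follows at once from Lemma \ref{lem8}, since $L_A$ is compact if and only if $\|L_A\|_\chi=0$.

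The main obstacle is really just verifying the two structural facts that drive the reduction: that $\Delta$ is a genuine surjective isometry of $bv$ onto $l_1$, so that $\chi$ transfers exactly under it, and that the row-wise tilde transform of Lemma \ref{lem13} commutes with the backward difference in the index $n$. The first is immediate from the definition of the $bv$-norm, and the second from the linearity of the defining formula; once both are isolated, the estimate is a direct consequence of Theorem \ref{thm12}.
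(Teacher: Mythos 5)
Your proof is correct and takes essentially the same route as the paper: both reduce the statement to Theorem \ref{thm12} via the identification $bv=(l_1)_\Delta$, observing that $\Delta A\in(l_p(r,s,t;B),l_1)$ and that the transformed matrix of $\Delta A$ has entries $\tilde a_{nk}-\tilde a_{n-1,k}$. You additionally make explicit two points the paper leaves implicit — that the surjective isometry $\Delta\colon bv\to l_1$ preserves the Hausdorff measure of noncompactness, so $\|L_A\|_\chi=\|L_{\Delta A}\|_\chi$, and that the row-wise tilde transform of Lemma \ref{lem13} commutes with backward differencing in the row index — which only strengthens the argument.
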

\begin{proof}
Using matrix domain, the sequence space $bv$ of bounded variation sequences can be written as matrix domain of triangle $\Delta$, i.e., $bv =(l_1)_{\Delta}$.
Let $A \in (l_p(r,s,t; B), bv)$. Then for each $x \in l_p(r,s,t; B)$, we get $(\Delta A)x= \Delta(Ax) \in l_1$.
So by Theorem \ref{thm12}, we have $$\displaystyle\lim_{m \rightarrow \infty} \Big( \sup_{F \in \mathcal{F}_{m}}\Big( \sum_{k=0}^{\infty}\Big |\sum_{n \in F} (\tilde{a}_{nk} - \tilde{a}_{n-1,k})\Big|^{p{'}}\Big)^{1 \over p{'}} \Big) \leq \|L_A \|_{\chi} \leq 4\lim_{m \rightarrow \infty} \Big( \sup_{F \in \mathcal{F}_{m}}\Big( \sum_{k=0}^{\infty}\Big |\sum_{n \in F}( \tilde{a}_{nk} - \tilde{a}_{n-1,k})\Big|^{p{'}}\Big)^{1 \over p{'}} \Big) $$
and
$L_A$ is compact if and only if $\displaystyle\lim_{m \rightarrow \infty} \Big( \sup_{F \in \mathcal{F}_{m}}\Big( \sum_{k=0}^{\infty}\Big |\sum_{n \in F} (\tilde{a}_{nk}- \tilde{a}_{n-1,k})\Big|^{p{'}}\Big)^{1 \over p{'}} \Big) =0$. This proves the theorem.
\end{proof}

\end{document}